\theoremstyle{definition}
\newtheorem{definition}{Definition}[section]
\newtheorem{example}[definition]{Example}
\theoremstyle{plain}
\newtheorem{theorem}[definition]{Theorem}
\newtheorem{proposition}[definition]{Proposition}
\newtheorem{lemma}[definition]{Lemma}
\newtheorem{corollary}[definition]{Corollary}
\newtheorem*{theorem*}{Theorem}
\newtheorem{construction}[definition]{Construction}
\theoremstyle{remark}
\newtheorem{remark}[definition]{Remark}
\crefname{theorem}{Theorem}{Theorems}
\crefname{proposition}{Proposition}{Propositions}
\crefname{lemma}{Lemma}{Lemmas}
\crefname{corollary}{Corollary}{Corollaries}
\crefname{conjecture}{Conjecture}{Conjectures}
\crefname{hypothesis}{Hypothesis}{Hypotheses}
\crefname{remark}{Remark}{Remarks}
\crefname{condition}{Condition}{Conditions}
\crefname{example}{Example}{Examples}
\def\C{\mathbb{C}}
\def\R{\mathbb{R}}
\def\Z{\mathbb{Z}}
\def\A{\mathbb{A}}
\def\PGL{\mathrm{PGL}}
\def\rd{\,\mathrm{d}}
\def\Cc{C_c^{\infty}}
\DeclareMathOperator{\vol}{vol}
\DeclareMathOperator{\Lie}{Lie}
\DeclareMathOperator{\Irr}{Irr}
\DeclareMathOperator{\re}{Re}
\DeclareMathOperator{\tr}{tr}
\newcommand{\fa}{\mathfrak{a}}
\newcommand{\fg}{\mathfrak{g}}
\newcommand{\fh}{\mathfrak{h}}
\newcommand{\ft}{\mathfrak{t}}
\newcommand{\bG}{\mathbb{G}}
\newcommand{\cA}{\mathcal{A}}
\newcommand{\cB}{\mathcal{B}}
\newcommand{\cF}{\mathcal{F}}
\newcommand{\cL}{\mathcal{L}}
\newcommand{\cP}{\mathcal{P}}
\newcommand{\cT}{\mathcal{T}}
\newcommand{\cU}{\mathcal{U}}
\newcommand{\cX}{\mathcal{X}}
\numberwithin{equation}{section}
\newcommand{\disc}{\mathrm{disc}}
\newcommand{\cusp}{\mathrm{cusp}}
\newcommand{\temp}{\mathrm{temp}}
\newcommand{\unit}{\mathrm{unit}}
\newcommand{\herm}{\mathrm{herm}}
\newcommand{\abs}[1]{\lvert #1 \rvert}
\newcommand{\st}{\, \vert \,}
\newcommand\myurl[1]{\url{#1}}
\newcommand\arxiv[1]{available at \href{https://arxiv.org/abs/#1}{arXiv:#1}}
\title{A note on the Sauvageot density principle}
\author{Yugo Takanashi} 
\address{Graduate School of Mathematical Sciences, University of Tokyo, 3-8-1 Komaba, Meguro-Ku, Tokyo 153-8914, Japan}
\email{tknashi@ms.u-tokyo.ac.jp}
\date{}
\begin{document}

\begin{abstract}
    In this short note, we address a gap in the proof of Sauvageot's paper \cite{Sau97} pointed out in \cite{NelsonVenkatesh2021-orbitmethod} and provide a complete proof of its main theorem.
\end{abstract}

\maketitle

\section{Introduction}

The Sauvageot density principle for reductive groups over local fields was proved in \cite{Sau97} and it has been applied to prove well known results; see, for example, \cite{Shi12}, \cite{FLM15} and \cite{MikrosBergeron2017}.
It was noted in \cite{Shi12}*{Appendix A} that there are some minor errors in the proof of this result.
More recently, in \cite{NelsonVenkatesh2021-orbitmethod}*{p.159, footnote}, it was pointed out that there is a gap in the proof. 
In this short note, we fill the gap in the proof pointed out in \cite{NelsonVenkatesh2021-orbitmethod}.
We also fix some notational and logical errors in the proof, pointed out in \cite{Shi12}.

Roughly speaking, to prove the density principle, we need to approximate a function on the unitary dual using the Fourier transform of compactly supported functions. 
The proof given in \cite{Sau97} utilizes only the functions that factor through the space of Bernstein components. 
However, the map from the unitary dual to the union of Bernstein components has non-trivial fibers, and the functions used by Sauvageot cannot separate the points within a single fiber.
We will explain why this method by Sauvageot actually fails, see \cref{remark:counterexample}.
This appears to be the gap identified in \cite{NelsonVenkatesh2021-orbitmethod}*{p.159, footnote}.

The main idea to fill the gap is to utilize a module version of the Stone-Weierstrass theorem, see \cref{lemma:module-version-Stone-Weierstrass}, and the measure theoretic analog of this result, see \cref{theorem:main-theorem}.
This enables us to distinguish points within each fiber of the map, allowing us to recover Sauvageot's original result.

In the last part of this paper, we will present two applications of Sauvegeot's density principle.
One is the automorphic Plancherel density theorem by Shin.
We will give a somewhat extended version of his result.
The other one is an application of the Weyl law by Finis-Matz and Eikemeier, which produces spherical cusp forms of a quasi-split group with prescribed local properties.

\subsection{Notation and assumptions}\label{subsection:notation}

Let $F$ be a local field or global field of characteristic 0.
Let $\bG$ be a connected reductive group over $F$.
We denote by $G$ the set of $F$-valued points of $\bG$.
Unless otherwise stated, we will identify $\bG$ and $G = \bG(F)$.
Also, we will only consider the sets of $F$-valued points of $F$-subgroups of $\bG$ and we referring to them simply as subgroups.

We use the following standard notation.
\begin{itemize}
  \item    
  Let $A_G$ denote the maximal split torus of the center $Z_G$ of $G$.
  \item Let $\cP^G(M)$ denote the set of parabolic subgroups of $G$ with the Levi factor $M$.
  \item
  Let $\cX(G)$ denote the group of unramified characters of $G$.
  Let $\cX_{\unit}(G)$ denote the group of unitary unramified characters of $G$.
  \item 
  Let $X^{*}(M)$ be the group of rational characters of $M$ over $F$.
  We set $\fa^*_M = X^*(M) \otimes_{\Z} \R$ and $\fa^*_{M, \C} = X^*(M) \otimes_{\Z} \C$.
  We have the natural map 
  \begin{align*}
        \fa^*_{M, \C} \to \cX(M) \colon \lambda \mapsto \chi_{\lambda}.
  \end{align*}
  \item Let $W(G, M)$ denote the relative Weyl group for $M$. 
\end{itemize}

If $F = \R$, let $\Lie(G)$ or $\fg$ denote the Lie algebra of $G$.
We always fix a maximal compact subgroup of $G$ and we denote it by $K$.
By a representation of a real reductive group, we mean an admissible $(\fg \otimes_{\R} \C, K)$ over $\C$. 
If we consider a topological representation, we take the Casselman-Wallach globalization.
If $F$ is a $p$-adic field, by a representation of $p$-adic groups, we mean a smooth admissible representation over $\C$.

\subsection{Acknowledgment}
I became aware of Sauvageot's paper while collaborating with Satoshi Wakatsuki.
I would like to thank Satoshi Wakatsuki for his encouragement and for careful reading.
I would like to thank Sug Woo Shin for his encouragement, careful reading and helpful comments which greatly improved the readability of this article.
Some parts of this paper are based on his work \cite{Shi12}.
I would like to thank Masaki Natori and Mao Hoshino for their careful reading and for pointing out some mistakes in \cref{definition:topology} and \cref{lemma:closure-invariant}.
I would like to thank Yoichi Mieda for his careful reading.
This work is supported by JSPS Grant-in-Aid for JSPS Fellows 23KJ0403.

\section{The space of infinitesimal characters}

\subsection{Definition}

We first recall the elementary facts about the space of infinitesimal characters, following \cite{BernsteinDeligneKazhdan1986}*{2}. 

Let $F, G$ as in \cref{subsection:notation}.

The notion of infinitesimal characters for reductive groups over $\R$ is well known;  they correspond to the characters of the commutative algebra $Z(U(\fg_{\C}))$.
Let $\Theta(G)$ denote the set of infinitesimal characters of $G$ .

\begin{definition}
   We assume that $F$ is a $p$-adic field.
   We call a pair $(M, \sigma)$, consisting of a Levi subgroup $M$ of $G$ and a supercuspidal representation $\sigma$ of $M$ a cuspidal datum of $G$. 
   We refer to a $G$-conjugacy class of a cuspidal datum of $G$ as an infinitesimal character of $G$.
   We write $\Theta(G)$ for the set of infinitesimal characters.
   We write $[M, \sigma]_G$ for the class of $(M, \sigma)$.
\end{definition}

The following subrepresentation theorem is well known.

\begin{theorem}[\cite{Renard2010-p-adiques}*{VI 5.4, Th\'eor\`eme}]\label{theorem:subrepresentation-theorem}
   We assume that $F$ is a $p$-adic field.
   Let $\pi$ be an irreducible representation of $G$.
   \begin{enumerate}
    \item There exists a cuspidal datum $(M, \sigma)$ and a parabolic subgroup $P \in \cP^G(M)$ such that $\pi \subset i^G_{P}(\sigma)$. 
    \item Let $(M, \sigma)$ and $(M', \sigma')$ be two cuspidal data.
    We assume that there exist parabolic subgroups $P \in \cP^G(M)$ and $P' \in \cP^G(M)$ such that the parabolic inductions $i^G_P(\sigma)$ and $i^G_{P'}(\sigma')$ have a common irreducible component.
    Then, the infinitesimal characters $[M, \sigma]_G$ and $[M', \sigma']_G$ are equal.
    Also, the converse holds true.
    \item      
    In the case of $(2)$, the semisimplifications of $i^G_P(\sigma)$ and $i^G_{P'}(\sigma')$ are equal.
   \end{enumerate} 
\end{theorem}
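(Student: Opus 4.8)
The plan is to deduce all three assertions from the standard formalism of normalized parabolic induction $i^G_Q$ and the normalized Jacquet (constant term) functors $r^G_Q$ attached to parabolic subgroups $Q = M_Q N_Q$ of $G$. First I would recall the ingredients, all available in \cite{Renard2010-p-adiques}: exactness and transitivity of $i^G_Q$ and $r^G_Q$; Frobenius reciprocity $\Hom_G(i^G_Q(\tau),\pi)\cong\Hom_{M_Q}(\tau,r^G_Q(\pi))$; Bernstein's second adjointness $\Hom_G(\pi,i^G_Q(\tau))\cong\Hom_{M_Q}(r^G_{\overline Q}(\pi),\tau)$ with $\overline Q$ the opposite parabolic; the Bernstein--Zelevinsky geometric lemma presenting $r^G_{Q'}\circ i^G_Q$ as an iterated extension of functors of the shape $i^{M_{Q'}}_{\bullet}\circ{}^w(-)\circ r^{M_Q}_{\bullet}$ indexed by double cosets $w$ for the Weyl groups of $\bG$, $M_{Q'}$ and $M_Q$; the finiteness theorem that $i^G_Q(\tau)$ has finite length whenever $\tau$ does, so that Jordan--H\"older multiplicities and semisimplifications make sense; and the fact (via the structure of the supercuspidal blocks of the Bernstein centre) that an irreducible supercuspidal representation is projective and injective in the category of smooth representations with its central character. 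From the last fact I would extract the auxiliary statement used throughout: a proper parabolic induction $i^G_Q(\sigma)$ with $Q\neq G$ has no supercuspidal subquotient, since such a subquotient would share the central character of $i^G_Q(\sigma)$, hence be a direct summand --- in particular a quotient --- of $i^G_Q(\sigma)$, forcing its $Q$-Jacquet module to be non-zero by Frobenius reciprocity, a contradiction.

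For part (1), the set of parabolic subgroups $Q = M_Q N_Q$ with $r^G_Q(\pi)\neq 0$ contains $Q = G$, so I would choose one for which $M := M_Q$ has minimal dimension. Then $r^G_Q(\pi)$ is a non-zero finitely generated admissible representation of $M$, hence has an irreducible quotient $\sigma$, and $\sigma$ is supercuspidal: if $r^M_R(\sigma)\neq 0$ for some proper parabolic $R$ of $M$, transitivity and exactness would give $r^G_{Q'}(\pi)\neq 0$ for the parabolic $Q'$ of $G$ whose Levi factor is that of $R$ --- strictly smaller than $M$ --- contradicting minimality. Since $\sigma$ is a quotient of $r^G_Q(\pi)$, second adjointness produces a non-zero, hence injective, map $\pi\hookrightarrow i^G_{\overline Q}(\sigma)$, so $P := \overline Q\in\cP^G(M)$ works.

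For the forward implication of part (2) I would induct on $\dim\bG$, the base case $\bG$ anisotropic modulo its centre being vacuous since there every irreducible representation is supercuspidal. Let $\pi$ be a common irreducible subquotient of $i^G_P(\sigma)$ and $i^G_{P'}(\sigma')$. By part (1), fix a cuspidal datum $(L,\rho)$ and $R\in\cP^G(L)$ with $\pi\hookrightarrow i^G_R(\rho)$; by symmetry it suffices to prove $[L,\rho]_G = [M,\sigma]_G$. Second adjointness gives a surjection $r^G_{\overline R}(\pi)\twoheadrightarrow\rho$, and as $\pi$ is a subquotient of $i^G_P(\sigma)$, exactness makes $\rho$ a subquotient of $r^G_{\overline R}(i^G_P(\sigma))$. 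By the geometric lemma the subquotients of the latter are built from ${}^w\big(r^M_{M\cap w^{-1}Lw}(\sigma)\big)$ over double coset representatives $w$; since $\sigma$ is supercuspidal only the $w$ with $wMw^{-1}\subseteq L$ contribute, and for such $w$ the subquotient is $i^L_{Q_w}({}^w\sigma)$ with $Q_w\in\cP^L(wMw^{-1})$ and ${}^w\sigma$ supercuspidal on $wMw^{-1}$. If $L = G$ then $\pi = \rho$ is supercuspidal, so by the auxiliary statement it is not a subquotient of $i^G_P(\sigma)$ unless $M = G$, in which case $\pi = \sigma$ and $[L,\rho]_G = [M,\sigma]_G$. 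If $L\neq G$, then $\rho$ is a supercuspidal subquotient of some $i^L_{Q_w}({}^w\sigma)$ inside the smaller group $L$, so the inductive hypothesis for $L$ (uniqueness of cuspidal support) gives $[wMw^{-1},{}^w\sigma]_L = [L,\rho]_L$; comparing Levi factors forces $wMw^{-1} = L$, whence $\rho\cong{}^w\sigma$ up to $L$-conjugacy and therefore $[L,\rho]_G = [wMw^{-1},{}^w\sigma]_G = [M,\sigma]_G$, completing the induction.

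Finally, the converse in part (2) and part (3) go together. Given $[M,\sigma]_G = [M',\sigma']_G$, choose $g\in G$ with $M' = {}^gM$ and $\sigma'\cong{}^g\sigma$; conjugation by $g$ identifies $i^G_{{}^gP}({}^g\sigma)$ with $i^G_P(\sigma)$, so it remains to check that the semisimplification of $i^G_Q(\tau)$ depends only on $(M_Q,\tau)$ and not on $Q\in\cP^G(M_Q)$. For this I would invoke the classical computation of the Harish-Chandra character of $i^G_Q(\tau)$, which depends only on $(M_Q,\tau)$, together with linear independence of the characters of irreducible representations, which in characteristic $0$ recovers the semisimplification from the character; this yields part (3), and since $i^G_P(\sigma)\neq 0$ it shows that two parabolic inductions with equal infinitesimal character share all Jordan--H\"older factors, in particular an irreducible component, which is the converse in part (2). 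The step I expect to be the main obstacle is the combination of running the geometric lemma precisely enough to pin down which double cosets survive the supercuspidality of $\sigma$ (and that the surviving subquotients really are honest normalized conjugate inductions $i^L_{Q_w}({}^w\sigma)$) with organizing the induction on $\dim\bG$ so that the appeal to uniqueness of cuspidal support for $L\subsetneq G$ is not circular --- which is exactly why part (1) and the ``no supercuspidal subquotient'' fact must be established independently, and why part (3) needs the characteristic-zero input that characters determine semisimplifications.
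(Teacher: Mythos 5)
The paper does not prove this statement: it is quoted as \cite{Renard2010-p-adiques}*{VI 5.4, Th\'eor\`eme} and used as a black box, so there is no in-text argument to compare against. Your sketch is correct and is essentially the classical proof found in Bernstein--Zelevinsky, Casselman's notes, and Renard's book itself: part (1) from a parabolic with minimal Levi having nonzero Jacquet module together with Bernstein's second adjointness; part (2) by induction on $\dim\bG$ using the geometric lemma to locate the cuspidal support of $\rho$, with the non-circular input being that an irreducible supercuspidal cannot occur as a subquotient of a proper induction (which you extract from projectivity and injectivity of supercuspidals in the fixed--central--character category); part (3) from van Dijk's character formula for induced representations together with linear independence of Harish-Chandra characters, which also yields the converse in (2) because $i^G_P(\sigma)\neq 0$. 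One small slip in your auxiliary statement: a surjection $i^G_Q(\sigma)\twoheadrightarrow\tau$ produces nonvanishing of the $\overline Q$-Jacquet module via second adjointness, not ordinary Frobenius reciprocity, which instead handles the subrepresentation case; since your $\tau$ is a direct summand, either adjunction applies and the conclusion stands, but the attribution should be fixed.
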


\begin{construction}\label{construction:space-infinitesimal-character}
    We assume that $F$ is a $p$-adic field.
    Let $(M, \sigma)$ be a cuspidal datum of $G$.
    We call the image of the map 
    \begin{align}
       \cX(M) \to \Theta(G) \colon \chi \mapsto [M, \sigma \otimes \chi]_G
    \end{align}
    a connected component of $\Theta(G)$ containing $[M, \sigma]_G$.
    We denote it by $C = C_{[M, \sigma]}$.
    We denote the stabilizer of $\sigma$ in $\cX(M)$ by $\cX(M)_{\sigma}$.
    Also, we denote the stabilizer of $C = C_{[M, \sigma]}$ in $W(G, M)$ by $W(G, M)_C$.

    Through the above map, we can identify the set $C_{[M, \sigma]}$ with the quotient of the complex algebraic torus $\cX(M)/\cX(M)_{\sigma}$ by the finite group $W(G, M)_{C}$.
    Thus, we endow the set $C = C_{[M, \sigma]}$ with the structure of affine algebraic variety $(\cX(M)/\cX(M)_{\sigma})//W(G, M)_{C}$.
    We also endow the set $\Theta(G)$ with the structure of a countable disjoint  union of complex algebraic varieties as the countable disjoint union of connected components.
    Note that we can identify the connected component $C$ with the image of the map 
    \begin{align}
          \fa^{*}_{M, \C} \to \Theta(G) \colon \lambda \mapsto [M, \sigma \otimes \chi_{\lambda}]_G.
    \end{align}
    This definition can be used to define the similar notion in the archimedean case.
\end{construction}

\subsection{A ring of functions and hermitian involutions}

\begin{definition}
     We assume that $F$ is a $p$-adic field.
     Let $\cA(G)$ denote the ring of regular functions on $\Theta(G)$ supported on finite connected components of $\Theta(G)$.
     We assume that $F = \R$.
     Let $\theta$ be a Cartan involution of $G(\R)$.
     In this case, let $\fh \subset \Lie(G)$ be the Lie algebra of a $\theta$-stable maximally split maximal torus $H \subset G$.
     Then, the subalgebra decomposes as $\fa_0 \oplus \ft_0$ with respect to the Cartan involution.
     The space $\fa_0$ (resp. $\ft_0$) is the $-1$-eigenspace (resp. $+1$-eigenspace) with respect to the action of $\theta$.
     We set $\fh_{\R} = \fa_0 \oplus i\ft_0$.
     Then, we have the Fourier-Laplace transform 
     \begin{align*}
         \Cc(\fh_{\R})^{W(G(\C),H(\C))} 
         \to   
         \mathcal{PW}(\fh_{\R} \otimes_{\R} \C)^{W(G(\C),H(\C))},
     \end{align*}
     from the space of compactly supported functions to the space of  Paley-Wiener functions.
     We denote by $\cA(G)$  the ring of functions on the right hand side.
\end{definition}

\begin{definition}
     We assume that $F$ is a $p$-adic field.
     We define the anti-holomorphic involution on the set $\Theta(G)$ by $[M, \sigma]_G \mapsto [M, \sigma^{h}]_G$, where $\sigma^h = \overline{\sigma}^{\vee}$. 
     We denote the image by $[M, \sigma]_G^h$.
     Let $\Theta(G)_{\herm}$ denote the set of fixed points of this involution.
     We assume that $F = \R$.
     The linear bijection $\Lie(G) \to \Lie(G) \colon X \mapsto -X$ extends $\C$-semilinearly to the semilinear isomorphism $Z(U(\Lie(G)_{\C})) \to Z(U(\Lie(G)_{\C}))$.
     We denote this map by $z \mapsto z^h$.
     This involution induces the anti-holomorphic  involution $(\cdot)^h \colon \Theta(G) \to \Theta(G)$.
\end{definition}

\begin{lemma}\label{lemma:stable-under-complex-conjugate-algebra}
     We denote the pullback of an element $a \in \cA(G)$ by the map $h$ by $a^h$.
     Then, the map $a \mapsto \overline{a^h}$ induces the $\C$-linear involution of the algebra $\cA(G)$.
\end{lemma}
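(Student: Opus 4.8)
The plan is to check three things: that the assignment $a\mapsto\overline{a^{h}}$ maps $\cA(G)$ into itself, that it squares to the identity, and that it is a ring homomorphism, semilinear over $\C$. Granting well-definedness, the last two are formal. First, $h$ is an involution of $\Theta(G)$: in the $p$-adic case taking the contragredient and taking the complex conjugate of a representation are commuting involutions, so $(\sigma^{h})^{h}=\sigma$; in the archimedean case $z\mapsto z^{h}$ is built from the order-two map $X\mapsto -X$. Since pullback along $h$ and pointwise complex conjugation are both additive and multiplicative, $a\mapsto\overline{a^{h}}$ is a ring endomorphism of the $\C$-algebra of functions on $\Theta(G)$; it satisfies $\overline{(\lambda a)^{h}}=\overline{\lambda}\,\overline{a^{h}}$, and $\overline{(\overline{a^{h}})^{h}}(\chi)=\overline{\overline{a(\chi)}}=a(\chi)$ by $h^{2}=\mathrm{id}$, so it is a semilinear ring involution. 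Everything therefore reduces to the statement that $\overline{a^{h}}\in\cA(G)$ for every $a\in\cA(G)$.

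\emph{The $p$-adic case.} The involution $h$ sends $[M,\sigma\otimes\chi]_{G}$ to $[M,\sigma^{h}\otimes\overline{\chi}^{-1}]_{G}$, and since it does not change the Levi $M$ it maps the connected component $C_{[M,\sigma]}$ isomorphically onto $C_{[M,\sigma^{h}]}$; in the torus coordinates of \cref{construction:space-infinitesimal-character} it is induced by $\chi\mapsto\overline{\chi}^{-1}$ on $\cX(M)$, which is a real-algebraic, anti-holomorphic automorphism compatible with the quotients by $\cX(M)_{\sigma}$ and by $W(G,M)_{C}$. Consequently the pullback $a^{h}$ of a regular function is anti-regular on that component and its conjugate $\overline{a^{h}}$ is regular again — concretely, in the monomial basis $\{e_{m}\}$ of the coordinate ring, $a=\sum_{m}c_{m}e_{m}$ gives $\overline{a^{h}}=\sum_{m}\overline{c_{m}}\,e_{-m}$, which stays $W(G,M)_{C}$-invariant because $m\mapsto -m$ is equivariant for the relative Weyl group. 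Finally $h$ carries finite components to finite ones: $M$ is unchanged and $\cX(M)_{\sigma^{h}}$ is the image of $\cX(M)_{\sigma}$ under the automorphism $\psi\mapsto\overline{\psi}^{-1}$ of $\cX(M)$, so $C_{[M,\sigma]}$ and $C_{[M,\sigma^{h}]}$ have equal dimension. Hence $\overline{a^{h}}$ is a regular function supported on finitely many finite components, i.e.\ $\overline{a^{h}}\in\cA(G)$.

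\emph{The archimedean case.} Unwinding the Harish--Chandra isomorphism $\Theta(G)\cong\fh_{\C}^{*}/W(G(\C),H(\C))$ together with the definition of $z\mapsto z^{h}$ as the $\C$-semilinear extension of $X\mapsto -X$, the involution $h$ is induced — after identifying $\fh_{\C}^{*}$ with $\fh_{\R}\otimes_{\R}\C$ by the Killing form — by the $\C$-semilinear extension of the $\R$-linear involution of $\fh_{\R}$ acting by $-1$ on $\fa_{0}$ and by $+1$ on $i\ft_{0}$. In particular $h$ stabilizes the real form $\fh_{\R}$, and on $\fh_{\R}\otimes_{\R}\C$ it equals an invertible $\R$-linear self-map of $\fh_{\R}$, extended $\C$-linearly, followed by complex conjugation relative to $\fh_{\R}$. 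Both precomposition with an invertible $\R$-linear change of variables and the operation $f\mapsto\overline{f(\overline{\,\cdot\,})}$ preserve the Paley--Wiener class on $\fh_{\R}\otimes_{\R}\C$ and the property of being the Fourier transform of a $W(G(\C),H(\C))$-invariant compactly supported function on $\fh_{\R}$ (the relevant compact support is only carried to its image under an $\R$-linear map of $\fh_{\R}$). Hence $\overline{a^{h}}\in\cA(G)$ again.

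The step that needs genuine care is the explicit identification of $h$ on each model of $\Theta(G)$ — the torus-quotient picture for $p$-adic $G$, the Harish--Chandra parameter picture for real $G$ — together with the check that $\overline{a^{h}}$ stays in the relevant class; once this is settled, well-definedness follows and the ring-involution properties are automatic. In the archimedean case the point to verify carefully is that $h$ genuinely preserves the real form $\fh_{\R}$ — which it does, acting by $\pm 1$ on the summands $\fa_{0}$ and $i\ft_{0}$ — so that the Paley--Wiener condition, which is attached to $\fh_{\R}$, is actually preserved rather than transported to a different real form.
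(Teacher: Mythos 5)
The paper states this lemma without proof, so there is no proof of its own to compare against. Your argument is essentially correct and supplies the missing one. You rightly isolate well-definedness — that $\overline{a^{h}}\in\cA(G)$ — as the only nontrivial point, and your explicit descriptions of $h$ in the two local cases are accurate: in the $p$-adic case $h$ acts on torus coordinates by $\chi\mapsto\overline{\chi}^{-1}$, carrying the component $C_{[M,\sigma]}$ anti-regularly onto $C_{[M,\sigma^{h}]}$ compatibly with the quotients by $\cX(M)_{\sigma}$ and $W(G,M)_{C}$; in the archimedean case $h$ factors as an $\R$-linear involution of $\fh_{\R}$ (namely $-1$ on $\fa_{0}$ and $+1$ on $i\ft_{0}$), extended $\C$-linearly, followed by complex conjugation relative to $\fh_{\R}$, and both operations preserve the Paley--Wiener class attached to $\fh_{\R}$. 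Worth flagging: you correctly observe (without making it explicit as a correction) that $a\mapsto\overline{a^{h}}$ is a $\C$-\emph{antilinear} ring involution — pullback along $h$ is $\C$-linear and pointwise conjugation is antilinear — so the phrase ``the $\C$-linear involution'' in the lemma is a slip of wording. The property actually needed for \cref{example:main-example} is stability of $\cA(G)$, restricted to $\Theta(G)_{\herm}$ (where $h$ is the identity), under pointwise complex conjugation, and that is precisely what your well-definedness argument delivers.
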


\section{The space of induced representations}

We introduce the space of induced representations from essentially discrete series representations, which is similar to the space of infinitesimal characters.

\begin{definition}
    We call a pair $(M, \sigma)$ which consists of a Levi subgroup $M$ of $G$ and an essentially discrete series representation $\sigma$ of $M$ a discrete pair of $G$.  
    We refer to a $G$-conjugacy class of a discrete pair of $G$ as a discrete datum of $G$.
    We denote the class of $(M, \sigma)$ by $\{M, \sigma\}_G$ and the set of discrete data of $G$ by $\Theta_{\disc}(G)$.
\end{definition}

\begin{lemma}\label{lemma:standard-module-equivalence}
    Let $(M, \sigma)$ and $(M', \sigma')$ be discrete data of $G$ and $P \in \cP^G(M)$ and $P' \in \cP^G(M')$.
    Then, the semisimplifications $i_P^G(\sigma)$ and $i_{P'}^G(\sigma')$ are equal if and only if the discrete data $\{M, \sigma\}_G$ and $\{ M', \sigma'\}_G$ are equal.
\end{lemma}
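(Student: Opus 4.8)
The plan is to relate both induced representations to the Langlands classification: I would rewrite each of $[i^G_P(\sigma)]$ and $[i^G_{P'}(\sigma')]$ as a non-negative integral combination of Langlands standard modules and then use the linear independence of standard modules in the Grothendieck group of admissible representations of $G$. For the ``only if'' direction, recall that the semisimplification of $i^G_P(\sigma)$ is the same for every $P\in\cP^G(M)$ (van Dijk, resp.\ Harish-Chandra), so $[i^G_P(\sigma)]$ depends only on $(M,\sigma)$, and that conjugating $(M,\sigma,P)$ by $g\in G$ replaces $i^G_P(\sigma)$ by an isomorphic representation. Hence if $\{M,\sigma\}_G=\{M',\sigma'\}_G$, say ${}^gM=M'$ and ${}^g\sigma\cong\sigma'$, then $[i^G_{P'}(\sigma')]=[i^G_{{}^gP}({}^g\sigma)]=[i^G_P(\sigma)]$, and a fortiori the semisimplifications agree.

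For the converse, write $\sigma\cong\sigma_0\otimes\chi_\nu$ with $\sigma_0$ a discrete series representation of $M$ and $\nu\in\fa^*_M$ (an essentially discrete series becomes square-integrable after an unramified twist); using independence of $[i^G_P(\sigma)]$ on $P$ I may take $P$ so that $\nu$ is dominant. Let $M_1\supseteq M$ be the Levi subgroup such that $\nu$, viewed in $\fa^*_{M_1}$, lies in the open positive chamber for the parabolic $P_1\in\cP^G(M_1)$ containing $P$ (equivalently, $M_1$ is the largest Levi containing $M$ with $\nu\in\fa^*_{M_1}$). Then $i^{M_1}_{P\cap M_1}(\sigma_0)$ is tempered, hence semisimple; let $\tau$ run over its irreducible tempered constituents, with positive integer multiplicities $m_\tau$. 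Transitivity of induction, together with pulling the character $\chi_\nu$ (which extends to $M_1$ because $\nu\in\fa^*_{M_1}$) out of $i^{M_1}_{P\cap M_1}$, yields in the Grothendieck group
\[
   [i^G_P(\sigma)]=\sum_\tau m_\tau\,[I(M_1,\tau,\nu)],\qquad I(M_1,\tau,\nu):=i^G_{P_1}(\tau\otimes\chi_\nu),
\]
and each $I(M_1,\tau,\nu)$ is a genuine Langlands standard module, since $\tau$ is irreducible tempered and $\nu$ is regular and $P_1$-dominant.

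The same reduction applies to $(M',\sigma')$, and the hypothesis then gives an equality of two non-negative combinations of standard modules. Since the transition between standard modules and irreducibles is triangular (every constituent of $I(M_1,\tau,\nu)$ other than its Langlands quotient has strictly smaller Langlands parameter, and the quotient occurs with multiplicity one), standard modules are linearly independent in the Grothendieck group; hence the two multisets of $G$-conjugacy classes of Langlands data coincide with multiplicity. In particular the pairs $(M_1,\nu)$ and $(M_1',\nu')$ are $G$-conjugate, so after conjugating the primed datum I may assume $M_1=M_1'$ and $\nu=\nu'$, and then the multisets $\{\tau\}$ and $\{\tau'\}$ agree up to $\mathrm{Stab}_{W(G,M_1)}(\nu)$; after one further conjugation, $i^{M_1}_{P\cap M_1}(\sigma_0)$ and $i^{M_1}_{P'\cap M_1}(\sigma_0')$ have the same semisimplification. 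Now I would invoke the classification of the tempered spectrum of $M_1$ (Harish-Chandra, and Silberger in the $p$-adic case): two parabolic inductions of discrete series of Levi subgroups of $M_1$ sharing an irreducible constituent come from $M_1$-conjugate discrete pairs, so $(M,\sigma_0)$ and $(M',\sigma_0')$ are $M_1$-conjugate, say ${}^mM=M'$ and ${}^m\sigma_0\cong\sigma_0'$ with $m\in M_1$. Since $\chi_\nu$ with $\nu\in\fa^*_{M_1}$ is a character of $M_1$, and hence fixed by $\mathrm{Ad}(m)$, it follows that ${}^m\sigma\cong\sigma_0'\otimes\chi_\nu\cong\sigma'$, i.e.\ $\{M,\sigma\}_G=\{M',\sigma'\}_G$. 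The archimedean case is entirely parallel, using the real-group forms of the Langlands classification and of the description of the tempered dual.

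I expect the main obstacle to be the middle steps: organizing the several $G$- and Weyl-group conjugations so that the reduction to standard modules with a regular parameter and the matching of Langlands data through linear independence genuinely yield compatible Levi subgroups, parameters, and multisets of tempered constituents; a secondary point is pinning down the precise form of the triangularity and linear-independence statement for standard modules so that it can be invoked uniformly in the real and $p$-adic cases.
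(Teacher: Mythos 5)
Your proposal is correct and follows essentially the same route as the paper's proof: reduce $[i^G_P(\sigma)]$ to a non-negative combination of Langlands standard modules by choosing the intermediate Levi $M_1$ so that the real part of the central character of $\sigma$ is regular in $\fa_{M_1}^*$, invoke linear independence of standard modules (the paper cites Clozel and Hecht–Schmid for this), and finish with the classification of tempered representations induced from discrete data. The only difference is expository bookkeeping; the paper handles the matching of Langlands data slightly more tersely by choosing standard dominant representatives from the outset.
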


\begin{proof}
    First, by the character formula for parabolic induction \cite{Lipsman1971}*{Theorem 9.1} and  \cite{vanDijk1972}*{5.3}, the character of $i_P^G(\sigma)$ is independent from the choice of $P$.
    Hence, we assume the real part of the central character $\re(\chi_{\sigma})$ of $\sigma$ is positive with respect to $P$ and $P$ is standard.
    Here, the positivity means that $\re(\chi_{\sigma})$ is in the closure of the cone defined by the fundamental weight associated to $P$ (that is the relative Weyl chamber associated to $P$). 

    Now, we take the Levi subgroup $M_1$ which contains $M$ such that $\re(\chi_{\sigma}) \in \fa_{M_1}^*$ and regular in $\fa_{M_1}^{*}$.
    Then, we obtain the standard parabolic subgroup $P_1$ of $G$ with Levi subgroup $M_1$ and $\re(\chi_{\sigma})$ is positive with respect to $P_1$.
    Also, we write the representation  $\sigma$ as $\sigma_{u} \otimes \re(\chi_{\sigma})$, where $\sigma_{u}$ is a unitary discrete series representation.
    In this situation, we have $i_P^{G}(\sigma) = i_{P_1}^G(i^{M_1}_{P \cap M_1}(\sigma_u) \otimes \re(\chi_{\sigma}))$.
    By decomposing $i^{M_1}_{P \cap M_1}(\sigma_u)$ to the irreducible tempered representations and taking their semisimplifications, we obtain the sum of standard modules.
    By the remark in the proof of \cite{Clo86}*{Proposition 2.1} and \cite{Hecht1979}*{Proposition 6.2}, the standard representations form a finite basis of the Grothendieck group of representations $\pi'$ of $G$ with $\inf_G(\pi') = \inf_G(i^G_P(\sigma))$.

    If we apply this procedure to the representation $i_{P'}^G(\sigma')$ to obtain $M_1'$, $P_1'$ and so on.
    By assumption, we see that $P_1=P_1'$, $M_1=M_1'$, $\re(\chi_{\sigma})=\re(\chi_{\sigma'})$ and $(M, \sigma_u)$ and $(M', \sigma'_u)$ are conjugate by an element $w \in M_1(F)$, i.e. $\sigma'_u = w (\sigma_u)$ and $M'=w(M)$. 
    As the character $\re(\chi_{\sigma})$ is in $\fa_{M_1}^*$, the element $w$ acts this element trivially and we have $w(M, \sigma)=(M', \sigma')$.
    Hence the result.
\end{proof}

\begin{remark}
    We identify $\{M, \sigma\}_G$ with the semisimplification $i^G_{P}(\sigma)_{ss}$ of the induced representation $i^G_{P}(\sigma)$.
    Using \cref{lemma:standard-module-equivalence} and \cref{construction:space-infinitesimal-character}, we can endow the set $\Theta_{\disc}(G)$ with the countable union of complex algebraic varieties and we can also define the notion of connected component.
    We omit the details.
\end{remark}

\begin{construction}
     We define the map 
     \begin{align*}
      \mathrm{inf}_G \colon \Theta_{\disc}(G) \to \Theta(G)
     \end{align*}
     defined as follows.

     Let $\{M, \sigma\}_G$ be a discrete pair of $G$.
     \begin{itemize}
    \item 
     If $F = \R$, the map is just taking the infinitesimal character of $i^G_{P}(\sigma)$ for any $P \in \cP^G(M)$.
     \item
     If $F$ is a $p$-adic field, we can embed $\sigma$ in $i^{M}_{Q}(\tau)$ for some cuspidal datum $(L, \tau)\in \Theta(M)$ and $Q \in \cP^{M}(L)$.
     Then, we set $\inf_G(\{M, \sigma\}_G) = [L, \tau]_G$.
     This map is well-defined by \cref{theorem:subrepresentation-theorem} and \cref{lemma:standard-module-equivalence}.
    \end{itemize}
\end{construction}

The following lemma is obvious.
\begin{lemma}\label{lemma:properness}
    The map 
    \begin{align*}
        \mathrm{inf}_G \colon \Theta_{\disc}(G) \to \Theta(G)
    \end{align*}
    taking the infinitesimal characters of representations is continuous and proper.
    Also, each fiber is a finite set.
\end{lemma}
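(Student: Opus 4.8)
The plan is to prove \cref{lemma:properness} by reducing everything to a statement about a single connected component, where the map $\inf_G$ becomes a finite morphism of affine algebraic varieties, and then invoking the standard fact that finite morphisms of varieties over $\C$ are proper (and have finite fibers) for the analytic topology.

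First I would fix a connected component $D$ of $\Theta_{\disc}(G)$, say $D = D_{\{M, \sigma\}}$, and analyze the image of $D$ under $\inf_G$. By \cref{lemma:standard-module-equivalence} and \cref{construction:space-infinitesimal-character}, the component $D$ is a quotient of the torus $\cX(M)/\cX(M)_\sigma$ by a finite group. In the $p$-adic case, embedding $\sigma \hookrightarrow i^M_Q(\tau)$ for a cuspidal datum $(L,\tau)$ of $M$, twisting $\sigma$ by an unramified character $\chi \in \cX(M)$ corresponds to twisting $\tau$ by the restriction $\chi|_L \in \cX(L)$, so on the level of algebraic varieties the map $D \to \Theta(G)$ is induced by the algebraic restriction homomorphism $\cX(M) \to \cX(L)$ (with the appropriate finite group quotients on both sides). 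Since $\cX(M) \to \cX(L)$ is a homomorphism of complex algebraic tori with finite kernel (the restriction map on cocharacter lattices is injective after tensoring with $\Q$, because $A_M \subset A_L$ up to isogeny — more precisely $M \supset L$ forces $X^*(M)_\R \hookrightarrow X^*(L)_\R$), the induced map on the torus quotients is finite, hence so is the map on the further finite quotients; thus $\inf_G$ restricted to $D$ is a finite morphism onto a closed subvariety of a single connected component $C$ of $\Theta(G)$. In the archimedean case one argues identically using that the infinitesimal character of $i^G_P(\sigma)$ is, via the Harish-Chandra isomorphism, the $W$-orbit of $\lambda_\sigma + \rho$ where $\lambda_\sigma$ ranges over the relevant affine subspace of $\fh^*_\C$; this is again a finite morphism of affine varieties.

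Next, properness: let $C \subset \Theta(G)$ be a connected component and let $\Lambda \subset \Theta(G)$ be compact. Since $\Theta(G)$ is a countable disjoint union of components and $\Lambda$ is compact, $\Lambda$ meets only finitely many components, and $\inf_G^{-1}(\Lambda)$ meets only those components $D$ of $\Theta_{\disc}(G)$ whose image lies in one of these finitely many components of $\Theta(G)$ — but a priori there could be infinitely many such $D$. Here I would use the \textbf{discreteness of the set of discrete components mapping into a fixed component}: a discrete component $D_{\{M,\sigma\}}$ mapping into $C = C_{[L,\tau]}$ forces $[L,\tau]_G$ to be the infinitesimal support of $i^M_Q(\tau) \hookleftarrow \sigma$, and by \cref{theorem:subrepresentation-theorem} the pair $(M,\sigma_u)$ (unitary part) is then one of finitely many subquotient data attached to $i^G_{P_1}(\tau)$ up to conjugacy and unramified twist; since there are only finitely many Levi subgroups up to conjugacy and only finitely many discrete series of each $M$ with a given infinitesimal character (the standard-module / Langlands classification finiteness, already used in the proof of \cref{lemma:standard-module-equivalence}), only finitely many components $D$ map into $C$. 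Therefore $\inf_G^{-1}(\Lambda)$ is contained in a finite union of components, on each of which $\inf_G$ is a finite morphism of affine varieties, hence proper for the analytic topology; a finite union of proper maps over the same base is proper, so $\inf_G^{-1}(\Lambda)$ is compact. Continuity of $\inf_G$ is immediate since it is given by algebraic (in particular continuous) maps on each component. Finiteness of the fibers is part of the finite-morphism statement, or alternatively follows directly from \cref{theorem:subrepresentation-theorem}(3) together with the finiteness of standard-module decompositions.

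The main obstacle I expect is the bookkeeping in the middle step — showing that only finitely many connected components of $\Theta_{\disc}(G)$ can map into a fixed connected component of $\Theta(G)$, and that on each such component the map is genuinely a finite morphism rather than merely quasi-finite. The quasi-finiteness is easy (finite fibers), but upgrading to finiteness/properness requires either a direct compactness argument on the torus quotients (using that the restriction map $\cX(M) \to \cX(L)$ has finite kernel and so its image is a closed subtorus, making the map to the image an isogeny followed by a closed immersion) or citing properness of finite-type morphisms with finite fibers that are also closed; I would take the explicit torus route since the varieties here are very concrete. Everything else is formal: $\Theta(G)$ and $\Theta_{\disc}(G)$ being countable disjoint unions reduces the global statement to the single-component statement, and on a single component the claim is elementary algebraic geometry.
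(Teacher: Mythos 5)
Your proposal takes a genuinely different route from the paper's. The paper dispatches both statements by citation: continuity and properness are attributed to \cite{NelsonVenkatesh2021-orbitmethod}*{p.~157, line 1}, and finiteness of fibers is deduced from the finiteness of $G$-conjugacy classes of Levi subgroups together with Wallach's Theorem~5.5.6 (archimedean), Renard VI~6.2 Lemme ($p$-adic), and the finite length of parabolic induction. You instead unpack the properness assertion into an explicit algebraic-geometric statement: on each connected component of $\Theta_{\disc}(G)$ the map $\inf_G$ is a finite morphism of affine varieties (an isogeny of tori followed by a closed immersion, passed to finite quotients), hence proper in the analytic topology, and then you reduce the global statement to the component-wise one. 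This is more self-contained and arguably more illuminating than a bare citation, and it packages the finite-fiber assertion and the properness assertion into a single structural fact.

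Two points deserve attention. First, for the torus map to be finite you need $\ker(\cX(M)\to\cX(L)/\cX(L)_{\tau})$ to be finite, which uses not only the injectivity of $X^*(M)_{\R}\hookrightarrow X^*(L)_{\R}$ (correct, since $Z_M^{\circ}\subset L$) but also the finiteness of the stabilizer $\cX(L)_{\tau}$ of a supercuspidal under unramified twist; you should say this explicitly. Second, and more substantively, the step you flag as the main obstacle---that only finitely many connected components of $\Theta_{\disc}(G)$ map into a fixed component $C_{[L,\tau]}$ of $\Theta(G)$---is indeed the crux, and your sketch gestures at it without closing the loop. What it actually requires is the finiteness, up to conjugacy and unramified twist, of essentially discrete series of a fixed Levi $M$ with supercuspidal support in a prescribed Bernstein component of $M$: this is precisely the content that the paper extracts from the Renard/Wallach citations, so in the end your proof and the paper's proof rest on the same non-formal input, just organized differently. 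If you make that finiteness a labelled intermediate lemma (with the same references the paper uses) the argument is complete; as written it risks appearing circular, since your informal phrase ``finitely many discrete series with a given infinitesimal character'' is essentially a pointwise restatement of the conclusion you want.
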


\begin{proof}
  See \cite{NelsonVenkatesh2021-orbitmethod}*{p.157, line 1} for the first statement.
  We prove the second statement.
  Note that the set of $G$-conjugacy classes of Levi subgroups of $G$ is finite.
  Thus, the second statement follows from \cite{Wallach1988realreductiveI}*{Theorem 5.5.6} if $F=\R$, and \cite{Renard2010-p-adiques}*{VI 6.2, Lemme} if $F$ is a $p$-adic field and the fact that the parabolic induction of any irreducible representation is of finite length.
\end{proof}

\begin{definition}
    Let $\Theta_{\disc}(G)_{\temp}$ denote the closed subset of $\Theta_{\disc}(G)$ which consists of classes $\{M, \sigma\}_G$ with $\sigma$ unitary discrete series.
    This set also endows with the structure of the countable union of algebraic varieties over $\R$, as the countable union of quotients of compact tori or $\R$-vector spaces by finite groups.
\end{definition}

\begin{remark}
  We can identify the element $\{M, \sigma\}_G \in \Theta_{\disc}(G)_{\temp}$ with the semisimple representation $i^G_P(\sigma)$.
  As the spaces $\Theta_{\disc}(G), \Theta(G)$ and $\Theta_{\disc}(G)_{\temp}$ are countable unions of quotients of locally Euclidean spaces by  finite groups, we have the notion of smooth functions, holomorphic functions on these spaces and so on.
\end{remark}

The following is well known, for example, see \cite{Wal03}*{Proposition III 4.1} and \cite{Harish-Chandra1972-Eisensteinintegral}*{Lemma 12}.
\begin{lemma}\label{lemma:tempered-conjugacy}
   Let $\{M, \sigma \}_G$ and $\{M', \sigma'\}_G$ be elements of $\Theta_{\disc}(G)_{\temp}$.
   Then, if the parabolic inductions $i^G_{P}(\sigma)$ and $i^G_{P'}(\sigma')$ have a common irreducible constituent, then we have the equality $\{M, \sigma \}_G = \{M', \sigma'\}_G$.
\end{lemma}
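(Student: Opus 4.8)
The plan is to reduce the statement to the corresponding fact about infinitesimal characters already recorded in \cref{theorem:subrepresentation-theorem}, via the map $\inf_G$. First I would observe that if $i^G_P(\sigma)$ and $i^G_{P'}(\sigma')$ share an irreducible constituent $\pi$, then $\pi$ has a well-defined infinitesimal character, so applying the $p$-adic case of \cref{theorem:subrepresentation-theorem}(2) (or \cite{Wallach1988realreductiveI} in the archimedean case) we get $\inf_G(\{M,\sigma\}_G) = \inf_G(\{M',\sigma'\}_G)$, i.e. the two discrete data lie in the same fiber of $\inf_G$. This is the easy half; the content is to upgrade ``same fiber of $\inf_G$'' to ``equal'' using temperedness.

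Next I would exploit that $\sigma$ and $\sigma'$ are \emph{unitary} discrete series. Write $\sigma \hookrightarrow i^M_Q(\tau)$ and $\sigma' \hookrightarrow i^{M'}_{Q'}(\tau')$ with $(L,\tau)$, $(L',\tau')$ the underlying cuspidal (resp.\ minimal-parabolic, in the real case) data; by transitivity of induction, $i^G_P(\sigma)$ and $i^G_{P'}(\sigma')$ are subquotients of $i^G_{P(Q)}(\tau)$ and $i^G_{P'(Q')}(\tau')$. From the previous paragraph $[L,\tau]_G = [L',\tau']_G$. The key point is that for unitary discrete series the parameter is ``as tempered as possible'': by Harish-Chandra's classification of the discrete series of $M$ (real case) or by the characterization of square-integrable representations via exponents / Casselman's criterion (\cite{Wal03}*{III}, together with the cited \cite{Harish-Chandra1972-Eisensteinintegral}*{Lemma 12}), the pair $(M,\sigma)$ is recovered from the cuspidal datum $[L,\tau]_G$ together with the requirement that $\sigma$ be a discrete series of the particular Levi $M$. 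Concretely, among all Levi subgroups $M''$ and discrete series $\sigma''$ of $M''$ with $\sigma'' \hookrightarrow i^{M''}_{Q''}(\tau'')$, $(L'',\tau'') \in [L,\tau]_G$, the Levi $M$ is forced to be (a conjugate of) the Levi $M_{[L,\tau]}$ determined by the real part of the central exponent of $\tau$ being regular and central — this is exactly the mechanism used in the proof of \cref{lemma:standard-module-equivalence}, specialized to the tempered situation where the real part vanishes. Hence $M$ and $M'$ are $G$-conjugate, and after conjugating we may take $M = M'$; then $\sigma$ and $\sigma'$ are two discrete series of $M$ inducing to representations with a common constituent, so by the linear independence of discrete-series characters (or again Harish-Chandra), $\sigma \cong \sigma'$ up to $W(G,M)$-conjugacy, giving $\{M,\sigma\}_G = \{M',\sigma'\}_G$.

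I would present the argument uniformly by phrasing it through $\inf_G$ and \cref{lemma:properness}: the fiber of $\inf_G$ over $[L,\tau]_G$ is finite, and within it the tempered members are distinguished by their characters, which by \cref{lemma:standard-module-equivalence} inject into the Grothendieck group. So if $i^G_P(\sigma)_{ss}$ and $i^G_{P'}(\sigma')_{ss}$ have a constituent in common I would actually argue they have the \emph{same} semisimplification: a common constituent forces equality of infinitesimal characters, hence both are sums of standard modules attached to the same cuspidal datum, and two tempered standard modules sharing a constituent must coincide (Langlands disjointness / the fact that tempered representations are their own Langlands quotients). Then \cref{lemma:standard-module-equivalence} gives $\{M,\sigma\}_G = \{M',\sigma'\}_G$ directly.

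The main obstacle I anticipate is the bookkeeping needed to show that the Levi subgroup $M$ is intrinsic to the fiber — i.e.\ that one cannot have a discrete series of a smaller Levi $M'$ inducing (up to semisimplification) to the same tempered packet as a discrete series of a larger $M$. In the real case this is Harish-Chandra's theory (the rank of $M$ equals the dimension of the relevant split part, which is pinned down by the infinitesimal character and the temperedness), and in the $p$-adic case it is the analysis of Jacquet modules / cuspidal supports; either way it is standard but must be invoked carefully, and it is the only place where ``discrete series'' (rather than merely ``irreducible'') is really used. I would lean on \cite{Wal03}*{Proposition III 4.1} and \cite{Harish-Chandra1972-Eisensteinintegral}*{Lemma 12}, exactly as the lemma's statement suggests, to avoid redoing this.
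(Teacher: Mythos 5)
Your proposal lands in the right place and cites exactly what the paper cites: the Harish-Chandra disjointness theorem (\cite{Harish-Chandra1972-Eisensteinintegral}*{Lemma 12}) in the real case and its $p$-adic analogue (\cite{Wal03}*{Proposition III 4.1}). The paper itself gives no proof beyond these citations, so in that sense your route is the same. Two things in the scaffolding you build around the citations are worth flagging, though. First, the attempt to derive the tempered case by ``specializing the mechanism used in the proof of \cref{lemma:standard-module-equivalence}'' is circular: when $\re(\chi_{\sigma})=0$ that proof chooses $M_1=G$ (the regularity condition forces it), and the crucial step ``$(M,\sigma_u)$ and $(M',\sigma'_u)$ are $M_1(F)$-conjugate'' then becomes precisely the disjointness statement one is trying to prove. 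So \cref{lemma:standard-module-equivalence} does not offer a shortcut here; rather, its own proof implicitly relies on this lemma as the tempered base case. Second, the phrase ``Langlands disjointness / tempered representations are their own Langlands quotients'' is not quite the right justification: the Langlands classification pins down the \emph{tempered} Langlands datum $(P,\delta,\nu)$ attached to an irreducible, whereas what is needed is uniqueness of the \emph{discrete-series} datum $(M,\sigma)$ attached to a tempered irreducible; the latter is strictly finer and is Harish-Chandra's theorem (resp.\ Waldspurger's), not a corollary of the Langlands classification. The preliminary reduction to $\inf_G(\{M,\sigma\}_G)=\inf_G(\{M',\sigma'\}_G)$ via \cref{theorem:subrepresentation-theorem} is correct but unnecessary, since the disjointness theorem you cite already yields the conclusion directly without passing through infinitesimal characters.
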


\section{On the generic irreducibility theorem}

As noted in \cite{Shi12}, the proof of the generic irreducibility theorem \cite{Sau97}*{Th\'eor\`eme 3.2} appears to contain an error. 
Konno \cite{Konno2003}*{Theorem 4.2} provided a remedy.
However, a stronger hypothesis than \cite{Sau97}*{Th\'eor\`eme 3.2} was used in \cite{Konno2003}*{p.403, l.24}.
We will show that this strengthening is unnecessary by proving the following lemma.
Furthermore, the proof of \cite{Sau97}*{Corollaire 3.3} seems to require the original version of the theorem.

\begin{lemma}
    Let $V$ be a real vector space. 
    Let $S$ be a finite subset in $V$.
    Let $T^{\vee}$ be a finite subset in the linear dual $V^{\vee}$ of $V$.
    If no element of $T^{\vee}$ vanishes at an element in $S$, then there exist $v \in \mathrm{span}_{\R}(S)$ such that $\lambda(v) \neq 0$ for all $\lambda \in T^{\vee}$.
\end{lemma}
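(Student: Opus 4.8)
The plan is to reduce the statement to the elementary fact that a vector space over an infinite field is never a finite union of proper subspaces. First I would replace $V$ by the finite‑dimensional subspace $W := \mathrm{span}_{\R}(S)$; this is where the desired $v$ is to live, and only the restrictions $\lambda|_W$ for $\lambda \in T^{\vee}$ are relevant. The hypothesis ensures in particular that each $\lambda \in T^{\vee}$ restricts to a \emph{nonzero} linear functional on $W$ (it does not vanish identically on $S$, hence not on $\mathrm{span}_{\R}(S)$), so that $H_{\lambda} := W \cap \Ker(\lambda)$ is a proper linear subspace of $W$. The conclusion then amounts to exhibiting a vector $v \in W \setminus \bigcup_{\lambda \in T^{\vee}} H_{\lambda}$.

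Second, I would show that such a $v$ exists by induction on $m := \# T^{\vee}$. The case $m = 0$ is trivial (take $v = 0$), and $m = 1$ is just the statement that a nonzero linear functional has a nonvanishing vector. For the inductive step, write $T^{\vee} = \{\lambda_{1}, \dots, \lambda_{m}\}$ and use the inductive hypothesis to produce $u \in W$ with $\lambda_{i}(u) \neq 0$ for $1 \le i \le m-1$. If $\lambda_{m}(u) \neq 0$ we are done; otherwise choose $w \in W$ with $\lambda_{m}(w) \neq 0$ and move along the line $\{\,u + tw : t \in \R\,\}$. Here $\lambda_{m}(u + tw) = t\,\lambda_{m}(w)$ vanishes only at $t = 0$, while for each $i \le m-1$ the affine function $t \mapsto \lambda_{i}(u + tw)$ is nonzero at $t = 0$ and hence vanishes for at most one value of $t$. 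Since $\R$ is infinite, I can choose $t \neq 0$ avoiding these finitely many bad values, and $v := u + tw$ works.

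Alternatively, and perhaps more cleanly, one may observe that $p := \prod_{\lambda \in T^{\vee}} \lambda|_{W}$ is a product of nonzero linear forms on the finite‑dimensional $\R$‑vector space $W$, hence a nonzero polynomial function on $W$; as $\R$ is infinite, $p$ is not identically zero, and any $v \in W$ with $p(v) \neq 0$ satisfies $\lambda(v) \neq 0$ for all $\lambda \in T^{\vee}$. I do not expect a genuine obstacle here: the only point requiring the hypothesis is that each $H_{\lambda}$ is a \emph{proper} subspace of $\mathrm{span}_{\R}(S)$, after which the statement is the standard fact that a real vector space is not covered by finitely many proper subspaces.
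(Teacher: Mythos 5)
Your proof is correct and follows the same route as the paper: both reduce to $W = \mathrm{span}_{\R}(S)$, observe that each $\lambda \in T^{\vee}$ restricts to a nonzero functional on $W$, and then invoke the fact that a finite-dimensional real vector space is not covered by finitely many proper hyperplanes. The paper cites this last step as the ``standard hyperplane avoidance technique'' without elaboration, while you spell it out (twice, via induction and via the product-of-linear-forms polynomial), but the substance is identical.
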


\begin{proof}
    Let $W = \mathrm{span}_{\R}(S)$.
    Then, every element of $T^{\vee}$ has non-zero restriction on $W$.
    Using the standard hyperplane avoidance technique, we can take $v \in W$ as in the statement.
\end{proof}

Let $P=MN$ be a parabolic subgroup of $G$ with a Levi decomposition $P=M N$.
Let $\Phi(G, A_M)$ be the set of weights of the action of $A_M$ on $\mathrm{Lie}(G)$.

\begin{theorem}\label{theorem-generic-irreducibility}
    For every irreducible representation $\sigma$ of $M$, there exists a neighborhood $\cU$ of $0 \in (\fa_M)^*_{\C}$ such that $i_P^G(\sigma \otimes \chi_{\lambda})$ is irreducible for any element $\lambda \in \cU$ such that $\alpha^{\vee}(\lambda) \neq 0$, $\alpha \in \Phi(G, A_M)$.
\end{theorem}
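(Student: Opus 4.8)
The plan is to run the argument of Konno \cite{Konno2003}*{Theorem 4.2} — which already repairs \cite{Sau97}*{Théorème 3.2} — and to replace the single place where he invokes a hypothesis stronger than that of \cite{Sau97}*{Théorème 3.2} (namely \cite{Konno2003}*{p.403, l.24}) by an application of the preceding lemma (hyperplane avoidance), incorporating along the way the notational corrections of \cite{Shi12}. That argument reduces the theorem, for $\pi_\lambda := i_P^G(\sigma\otimes\chi_\lambda)$ with $\lambda$ near $0$ and $\alpha^\vee(\lambda)\neq 0$ for all $\alpha\in\Phi(G,A_M)$, to two generic facts: that $\End_G(\pi_\lambda)$ is one-dimensional, and that $\pi_\lambda$ is semisimple there.

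For the first fact I would use Bernstein's second adjunction in the $p$-adic case (and the analysis of the constant term of an induced representation in the archimedean case) to write $\End_G(\pi_\lambda)\cong\Hom_M(r_{\overline{P}}^G i_P^G(\sigma\otimes\chi_\lambda),\ \sigma\otimes\chi_\lambda)$, and then the geometric lemma of Bernstein–Zelevinsky (resp. the filtration of the constant term) to present $r_{\overline{P}}^G i_P^G(\sigma\otimes\chi_\lambda)$ as an iterated extension indexed by a finite set of Weyl elements, with $\sigma\otimes\chi_\lambda$ among the graded pieces and the remaining pieces induced from proper Levi subgroups. Schur's lemma then bounds $\dim\End_G(\pi_\lambda)$ by $1$ plus the number of $w\neq 1$ whose graded piece still admits a nonzero map to $\sigma\otimes\chi_\lambda$. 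Using the subrepresentation theorem (\cref{theorem:subrepresentation-theorem}, Casselman's version when $F=\R$) to embed $\sigma\hookrightarrow i^M_Q(\tau)$ with $(L,\tau)$ a cuspidal (resp. discrete) datum of $M$, such a $w$ must preserve the cuspidal support of $\sigma\otimes\chi_\lambda$ up to conjugacy; since $\tau$ is fixed, for $\lambda$ in a small enough neighborhood of $0$ only elements $w$ of a fixed finite subgroup $W^{\sharp}\subseteq W(G,M)$ can do this, and then necessarily $w\lambda=\lambda$.

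The crux is to show $w\lambda=\lambda$ is impossible for $w\in W^{\sharp}\setminus\{1\}$ once $\alpha^\vee(\lambda)\neq 0$ for all $\alpha\in\Phi(G,A_M)$ — this is exactly where \cite{Konno2003} uses the stronger hypothesis. I would argue that the fixed-point locus of a nontrivial element of $W^{\sharp}$ is a finite union of hyperplanes cut out by the coroots $\alpha^\vee$, $\alpha\in\Phi(G,A_M)$: by Knapp–Stein/Silberger $R$-group theory (Harish-Chandra's in the real case) the part of $W^{\sharp}$ responsible for reducibility is generated by the reflections $s_\alpha$ through $\{\alpha^\vee=0\}$, and Steinberg's description of point stabilizers in reflection groups then shows that a $\lambda$ lying off all these hyperplanes has trivial stabilizer — the point being that this can be checked on the relevant \emph{finite} collection of real parts of central data and the finite collection of coroots, rather than on a whole span as in \cite{Konno2003}, which is precisely the content of the preceding lemma. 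Note that the condition $\alpha^\vee(\lambda)\neq 0$ already excludes $\lambda=0$, so nothing is claimed about $i_P^G(\sigma)$ itself. This gives $\End_G(\pi_\lambda)=\C$; and after shrinking $\cU$ so that all standard intertwining operators $A(w,\sigma\otimes\chi_\lambda)$, $w\in W(G,M)$, are holomorphic and bijective on $\cU$ away from the root hyperplanes (Harish-Chandra's $\mu$-function theory; see \cite{Wal03} and \cite{Harish-Chandra1972-Eisensteinintegral}), $\pi_\lambda$ is semisimple there, and a semisimple representation with one-dimensional endomorphism algebra is irreducible.

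The main obstacle is the third paragraph: correctly identifying the finite set $W^{\sharp}$ of Weyl elements that can produce reducibility near $0$, and proving that the fixed-point hyperplanes of its nontrivial elements are all of the form $\{\alpha^\vee=0\}$, $\alpha\in\Phi(G,A_M)$. This is the delicate combinatorial/$R$-group point that \cite{Konno2003} circumvents by a stronger hypothesis and that the preceding lemma is tailored to handle. A secondary, more routine difficulty is arranging that $\cU$ can be taken uniformly and transporting the geometric-lemma bookkeeping to the case $F=\R$, where Jacquet modules are replaced by asymptotics of matrix coefficients and the Bernstein–Zelevinsky filtration by the filtration of the constant term.
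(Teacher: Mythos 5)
Your $p$-adic plan matches the paper's almost verbatim: Theorem~\ref{theorem-generic-irreducibility} for $p$-adic $F$ is proved by running \cite{Konno2003}*{Theorem 4.2} and replacing the single appeal to the strengthened hypothesis at \cite{Konno2003}*{p.403, l.24} by the preceding hyperplane-avoidance lemma. Where you diverge is the archimedean case, and that divergence is a real gap. The paper handles $F=\R$ by directly citing \cite{Renard2024}*{Theorem 1.1} and does not attempt to transport Konno's argument. Your proposal instead treats ``transporting the geometric-lemma bookkeeping to the case $F=\R$'' as a ``secondary, more routine difficulty,'' but Konno's proof is built on Bernstein--Zelevinsky machinery and second adjunction, both genuinely $p$-adic; its archimedean analogue (constant-term filtrations, asymptotics of matrix coefficients, holomorphy of standard intertwining operators under deformation) is not routine, and the existence of a dedicated recent paper of Renard on exactly this statement is the indication that it needs its own treatment. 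So you need either an actual execution of that transport or, as in the paper, a citation that does it.

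A second, more local issue: in your third paragraph you invoke Knapp--Stein/Silberger $R$-group theory (resp.\ Harish-Chandra's) to argue that the ``reducibility part'' $W^\sharp$ is generated by reflections $s_\alpha$, and then use Steinberg's stabilizer theorem. But $R$-group theory is a \emph{tempered} tool (unitary parabolic induction of discrete series), whereas here $\sigma$ is an arbitrary irreducible representation of $M$; it is not automatic that $W^\sharp$ is reflection-generated, nor that its reflection hyperplanes are exactly the root hyperplanes $\{\alpha^\vee=0\}$, $\alpha\in\Phi(G,A_M)$. The paper's Lemma 4.1 is not cast in this structural role at all: it is a bare linear-algebra statement (``there is $v\in\mathrm{span}_\R(S)$ avoiding a finite set of hyperplanes'') meant to be dropped into Konno's argument at one point to remove his extra positivity assumption on the real parts of the cuspidal central characters, not to replace the reducibility analysis by a reflection-group argument. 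You should reread the relevant step of Konno and identify precisely which vector he needs off which hyperplanes, rather than reconstructing the reducibility locus from $R$-group considerations.
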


\begin{proof}
    If $F=\R$, this follows from \cite{Renard2024}*{Theorem 1.1}.
    We assume that $F$ is a $p$-adic field.
    Then, the proof is the same as that in \cite{Konno2003}*{Theorem 4.2}, if we combine the argument at \cite{Konno2003}*{p.403, l.24} with the previous lemma.
\end{proof}

\section{The unitary dual and the Plancherel measure}

We will review some definitions and results on the unitary dual and the Plancherel measures for reductive groups over local fields.

\begin{definition}
    We write $\Irr_{\unit}(G)$ for the unitary dual of $G$.
    This space is naturally endowed with the Fell topology.
    Let $\Irr_{\temp}(G)$ denote the closed subset of $\Irr_{\unit}(G)$ consisting of tempered representations.
\end{definition}

We fix a Haar measure on $G$ throughout.
Also, the space $\Theta_{\disc}(G)$ is a countable union of quotients of locally Euclidean spaces by finite groups.
Thus, we have the quotient measure on $\Theta_{\disc}(G)$, induced from the locally Euclidean measures.
This measure naturally restricts to the subspace $\Theta_{\disc}(G)_{\temp}$.
Let $d \pi$ denote this measure, where $\pi = i^G_P(\sigma) \in \Theta_{\disc}(G)_{\temp}$.

\begin{theorem}[\cite{Wal03}*{Th\'eor\`eme VIII 1.1}, \cite{Wallach1992RRGII}*{Theorem 13.4}]
    There exists a positive smooth function $\mu^G(\pi)$ of moderate growth on $\Theta_{\disc}(G)_{\temp}$ such that we have 
    \begin{align*}
        f(1) = \int_{\pi \in \Theta_{\disc}(G)_{\temp}} \tr\pi (f) \mu^G(\pi) \rd\pi
    \end{align*}
    for any smooth function $f \in \Cc(G)$.
\end{theorem}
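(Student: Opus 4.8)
The plan is to deduce the identity from the Harish-Chandra--Plancherel formula in the two forms already cited in the statement, namely \cite{Wal03}*{Th\'eor\`eme VIII 1.1} when $F$ is $p$-adic and \cite{Wallach1992RRGII}*{Theorem 13.4} when $F=\R$; the real task is only to repackage it as an integral over $\Theta_{\disc}(G)_{\temp}$ against the measure $\rd\pi$ fixed above, and to check that the Plancherel density descends to a function with the asserted properties. First I would record the classical statement: having fixed a Haar measure on $G$ together with compatible Haar measures on all Levi subgroups, one has
\begin{align*}
  f(1)=\sum_{[M]}\frac{1}{|W(G,M)|}\int_{\sigma\in\Pi_2(M)}\tr i^G_P(\sigma)(f)\,\mu_M(\sigma)\,\rd\sigma,
\end{align*}
where $[M]$ runs over the finitely many conjugacy classes of Levi subgroups of $G$, $\Pi_2(M)$ denotes the set of unitary discrete series of $M$ equipped with the measure $\rd\sigma$ coming from the families of unitary unramified twists $\{\sigma\otimes\chi:\chi\in\cX_\unit(M)\}$, $P\in\cP^G(M)$ is arbitrary (the character of $i^G_P(\sigma)$ being independent of it), and $\mu_M$ is the Harish-Chandra $\mu$-function.

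Next I would identify $\bigsqcup_{[M]}\Pi_2(M)$, modulo the actions of the groups $W(G,M)$, with $\Theta_{\disc}(G)_{\temp}$ via $(M,\sigma)\mapsto\{M,\sigma\}_G=i^G_P(\sigma)_{ss}$. By \cref{lemma:tempered-conjugacy} and \cref{lemma:standard-module-equivalence} this class is determined by, and determines, the semisimplification $i^G_P(\sigma)_{ss}$, and the fibres of the map $(M,\sigma)\mapsto\{M,\sigma\}_G$ are exactly the $W(G,M)$-orbits; moreover the locus of $\sigma$ with non-trivial stabiliser is a countable union of proper subvarieties, hence $\rd\sigma$-null. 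Pushing the measures $\tfrac{1}{|W(G,M)|}\rd\sigma$ forward along this map and summing over $[M]$ produces, by the construction of the variety structure in \cref{construction:space-infinitesimal-character} and of $\rd\pi$, the measure $\rd\pi$ on $\Theta_{\disc}(G)_{\temp}$, up to an overall constant absorbed into the normalisation of the Haar measure on $G$. It then remains to set $\mu^G(\pi):=\mu_M(\sigma)$ for $\pi=\{M,\sigma\}_G$; this is well defined because $\mu_M$ is $W(G,M)$-invariant and constant along the finite fibres of the twisting families, so it descends to $\Theta_{\disc}(G)_{\temp}$, and inserting it into the displayed formula yields the theorem.

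The positivity of $\mu^G$ on the tempered locus, its real-analyticity (hence its smoothness on the quotient varieties making up $\Theta_{\disc}(G)_{\temp}$), and its polynomial, i.e.\ moderate, growth are inherited verbatim from the properties of the Harish-Chandra $\mu$-function established in \cite{Wal03} and \cite{Wallach1992RRGII}, where $\mu_M$ is exhibited as a finite product of rank-one $\mu$-functions, each an explicit ratio of polynomial (respectively trigonometric-polynomial) expressions in the unitary parameter. The one genuinely delicate step is the bookkeeping in the previous paragraph: matching the abstractly defined quotient measure $\rd\pi$ with $\sum_{[M]}\tfrac{1}{|W(G,M)|}\rd\sigma$ requires a careful and mutually compatible choice of Haar measures on $G$ and on all of its Levi subgroups, together with some attention to the finite-group quotients, but it brings in no analytic input beyond the two cited references.
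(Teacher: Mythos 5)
The paper does not actually prove this theorem: it cites it directly to Waldspurger (\cite{Wal03}*{Th\'eor\`eme VIII 1.1}) for the $p$-adic case and Wallach (\cite{Wallach1992RRGII}*{Theorem 13.4}) for the real case, and relies on those sources wholesale. Your sketch is therefore not competing with an argument in the paper; it is reconstructing the repackaging step that the paper silently assumes when passing from the classical ``sum over conjugacy classes of Levi subgroups'' form of the Plancherel formula to an integral over $\Theta_{\disc}(G)_{\temp}$ against the quotient measure $\rd\pi$. That reconstruction is essentially right: the identification of $\bigsqcup_{[M]}\Pi_2(M)$ modulo the $W(G,M)$-actions with $\Theta_{\disc}(G)_{\temp}$ via $(M,\sigma)\mapsto\{M,\sigma\}_G$ is exactly what \cref{lemma:tempered-conjugacy} and \cref{lemma:standard-module-equivalence} give, the $W(G,M)$-invariance of Harish-Chandra's $\mu$-function makes the density descend, and the measure bookkeeping (stabilisers of generic $\sigma$ being trivial off a null set, finite-group quotient factors, compatible Haar measure choices on Levi subgroups) is the genuine content you correctly flag as the delicate point. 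One small caveat worth making explicit if this were to be written out fully: the paper fixes $\rd\pi$ as the quotient measure coming from a chosen Euclidean structure, and the match with $\sum_{[M]}|W(G,M)|^{-1}\rd\sigma$ therefore pins down (rather than merely ``absorbs'') the normalisation of the Haar measure on $G$, so the phrase ``up to an overall constant absorbed into the Haar measure'' should be sharpened to say that the Haar measure is chosen precisely so that these two normalisations agree.
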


\begin{definition}\label{definition:Fourier-transform}
    We call the measure $\mu^G(\pi) d\pi$ the Plancherel measure of $G$ and we denote it by $\mu^G$.
    Let $\widehat{f}$ denote the function $\pi \mapsto \tr \pi(f)$ on $\Irr_{\unit}(G)$ or $\Theta_{\disc}(G)$.
    We refer to it as the Fourier transform of $f \in \Cc(G)$.
    We denote the set of Fourier transforms for $K$-finite elements in $\Cc(G)$ by $\mathcal{FT}(G)$.
\end{definition}

\begin{definition}
    Let $f \in \Cc(G)$ be a smooth function.
    We set 
    \begin{align*}
        f^*(g) = \overline{f(g^{-1})}.
    \end{align*}
\end{definition}

\begin{lemma}\label{lemma:stable-under-complex-conjugate}
    Let $\pi$ be a unitary admissible  representation of $G$ of finite length. 
    Let $f \in \Cc(G)$ be a smooth $K$-finite function.
    Then, we have 
    \begin{align*}
        \tr\pi(f^*) = \overline{\tr\pi(f)}.
    \end{align*}
\end{lemma}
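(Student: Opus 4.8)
The plan is to realize $\pi(f^{*})$ as the Hermitian adjoint of $\pi(f)$ with respect to a $G$-invariant inner product, and then to invoke the elementary fact that the trace of the adjoint of a finite-rank operator on an inner product space is the complex conjugate of the trace of the operator. Using that $\pi$ is unitary, I would first fix a positive-definite $G$-invariant Hermitian inner product $\langle\,\cdot\,,\cdot\,\rangle$ on the space of $\pi$; in the $p$-adic case this is directly an inner product on the smooth representation, while in the archimedean case I would work on the Casselman--Wallach globalization (equivalently on its Hilbert-space completion), noting that the trace computed there agrees with $\tr\pi(f)$ as in \cref{definition:Fourier-transform} because $f$ is $K$-finite.

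Next I would record that $\pi(f)$ and $\pi(f^{*})$ are finite-rank operators. Indeed, since $f$ is $K$-finite, $\pi(f)$ factors through the projection onto a finite sum of $K$-isotypic subspaces and has image contained in a finite sum of $K$-isotypic subspaces, each of which is finite-dimensional by admissibility; the same applies to $f^{*}$, which is again $K$-finite. Consequently both traces are well defined and, choosing an orthonormal basis $\{e_{i}\}$ of (the completion of) the space of $\pi$ adapted to the $K$-isotypic decomposition, each trace is a finite sum of the form $\sum_{i}\langle \pi(f)e_{i},e_{i}\rangle$.

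The heart of the argument is the identity $\pi(f^{*}) = \pi(f)^{*}$. I would prove it by unwinding definitions: for vectors $v,w$ one computes
\[
  \langle v,\pi(f^{*})w\rangle
  = \int_{G} f(g^{-1})\,\overline{\langle \pi(g)w,v\rangle}\,\rd g
  = \int_{G} f(g^{-1})\,\langle \pi(g^{-1})v,w\rangle\,\rd g ,
\]
where the second equality uses the invariance of $\langle\,\cdot\,,\cdot\,\rangle$; substituting $g\mapsto g^{-1}$, which preserves the Haar measure since $G$ is unimodular, turns this into $\int_{G} f(g)\,\langle \pi(g)v,w\rangle\,\rd g = \langle \pi(f)v,w\rangle$, so that $\langle v,\pi(f^{*})w\rangle = \langle \pi(f)v,w\rangle = \langle v,\pi(f)^{*}w\rangle$ for all $v,w$. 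Combining this with the previous step,
\[
  \tr\pi(f^{*}) = \tr\bigl(\pi(f)^{*}\bigr) = \sum_{i}\langle \pi(f)^{*}e_{i},e_{i}\rangle = \sum_{i}\overline{\langle \pi(f)e_{i},e_{i}\rangle} = \overline{\tr\pi(f)} .
\]

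I do not expect a serious obstacle here; the only points requiring a little care are the bookkeeping in the archimedean case --- checking that passing to the Casselman--Wallach globalization (or the Hilbert completion) does not change the trace, and that $\pi(f)$ and $\pi(f^{*})$ are genuinely finite-rank there so that the orthonormal-basis computation of the trace is legitimate --- and both of these follow from admissibility of $\pi$ together with the $K$-finiteness of $f$.
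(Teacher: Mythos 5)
Your proposal is correct and follows essentially the same route as the paper: the paper also fixes an orthonormal basis, unwinds $(\pi(f^{*})v,v)$ as an integral, performs the substitution $g\mapsto g^{-1}$, and uses unitarity together with admissibility plus $K$-finiteness to make the sum finite. You simply factor the computation through the operator identity $\pi(f^{*})=\pi(f)^{*}$ before taking traces, which is the same calculation reorganized.
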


\begin{proof}
    We fix an orthonormal basis $\cB$ of $V$.
    Then, we have 
    \begin{align*}
        \tr\pi(f^*) 
        &= 
        \sum_{v \in \cB} (\pi(f^{*})v, v) \\
        &=  
        \sum_{v \in \cB} \int_{G} \overline{f(g^{-1})} (\pi(g)v, v) \rd g \\
        &= 
        \sum_{v \in \cB} \int_{G} \overline{f(g) (\pi(g)v, v) } \rd g \\
        &= \overline{\tr \pi(f)}.
    \end{align*}
    Note that by admissibility, the above  sums are finite sums.
    Hence the result.
\end{proof}

\begin{lemma}\label{lemma:identification}
     The subset of $\Theta_{\disc}(G)_{\temp}$ which consists of parabolic  inductions of regular discrete series representations in the sense of \cite{Sau97}*{p.172, line 30} is a dense open subset in $\Theta_{\disc}(G)_{\temp}$ which is conull with respect to $d\pi$.
     Thus, we can identify $\Theta_{\disc}(G)_{\temp}$ with $\Irr_{\temp}(G)$ up to a null set with respect to $d\pi$.
  
    On this subset, we have an equality of induced topologies from the spaces $\Theta_{\disc}(G)_{\temp}$ and $\Irr_{\temp}(G)$. 
\end{lemma}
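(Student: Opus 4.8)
The plan is to cut out the open dense ``regular'' locus of $\Theta_{\disc}(G)_{\temp}$, show that it is $d\pi$-conull, and verify that parabolic induction identifies it homeomorphically with a corresponding full-measure locus of $\Irr_{\temp}(G)$. Recall first that a class $\{M,\sigma\}_G \in \Theta_{\disc}(G)_{\temp}$ is \emph{regular} in the sense of \cite{Sau97}*{p.172, line 30} precisely when the stabilizer $W(G,M)_{\sigma}$ of $\sigma$ in $W(G,M)$ is trivial. By Harish-Chandra's completeness theorem together with Knapp--Stein $R$-group theory --- the same input used in \cref{theorem-generic-irreducibility} --- triviality of $W(G,M)_{\sigma}$ forces the $R$-group of $\sigma$ to be trivial, so $i^G_P(\sigma)$ is irreducible and tempered for every $P \in \cP^G(M)$, independently of $P$. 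Hence $\{M,\sigma\}_G \mapsto i^G_P(\sigma)$ gives a well-defined map $\iota$ from the regular locus $\cR \subseteq \Theta_{\disc}(G)_{\temp}$ into $\Irr_{\temp}(G)$.

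Next I would show that $\cR$ is open, dense, and $d\pi$-conull. On each connected component of $\Theta_{\disc}(G)_{\temp}$ --- a quotient of a compact torus or of a real vector space by a finite group $\Gamma$ --- the non-regular locus is the finite union of the fixed-point sets of the nontrivial elements of $\Gamma$ that stabilize the component, each a proper closed submanifold; so $\cR$ is open and dense, with $d\pi$-null complement, in that component, and summing over the countably many components gives the claim. That these fixed-point sets are proper --- that no whole component consists of non-regular data --- is the one point requiring a short separate check, and it is exactly the standard fact that the trivial-$R$-group locus carries full Plancherel measure (cf.\ \cite{Wal03}, \cite{Wallach1992RRGII}).

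Then I would analyse $\iota|_{\cR}$. Injectivity is immediate from \cref{lemma:tempered-conjugacy}, since two regular classes with isomorphic (hence a fortiori constituent-sharing) inductions must coincide. For the image: by the subrepresentation theorem \cref{theorem:subrepresentation-theorem} in the $p$-adic case and the Langlands classification in the real case, every irreducible tempered representation occurs as a summand of $i^G_P(\sigma)$ for some unitary discrete datum $(M,\sigma)$; thus $\Irr_{\temp}(G)\setminus\iota(\cR)$ consists of constituents of the reducible inductions attached to non-regular classes, which is the image of a $d\pi$-null set under a finite-to-one map and therefore null for the pushforward of $d\pi$. Transporting $d\pi$ along $\iota$ then identifies $\Theta_{\disc}(G)_{\temp}$ with $\Irr_{\temp}(G)$ up to $d\pi$-null sets.

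Finally, for the topological assertion I would show $\iota|_{\cR}$ is a homeomorphism onto its image. Continuity of $\iota$ is continuity of parabolic induction in the Fell topology as the inducing datum varies (choosing $P$ consistently on each component). The reverse direction --- openness of $\iota$ on $\cR$, equivalently continuity of $\iota^{-1}$ --- is where regularity is essential: on $\cR$ the induced representation is irreducible, which removes the non-Hausdorff collapsing of the Fell topology, and the inducing datum is recovered continuously from the character of $i^G_P(\sigma)$ (its infinitesimal character determines the component and a point of it up to $\Gamma$, refined by local coordinates on the component); alternatively one cites directly the known homeomorphism between the regular tempered dual and the present quotient (\cite{Wal03} in the $p$-adic case; Harish-Chandra and \cite{Wallach1992RRGII} in the real case). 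I expect this last step --- openness of $\iota$ on $\cR$, and in tandem pinning down exactly which measure on $\Irr_{\temp}(G)$ makes ``up to a null set'' unambiguous --- to be the main obstacle; the openness, density and conullity of $\cR$ together with the injectivity of $\iota$ are comparatively routine.
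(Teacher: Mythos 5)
The paper's own proof is two citations: the openness/density/conullity claim is read off from \cref{theorem-generic-irreducibility}, and the topological claim is read off from \cite{Beuzart-Plessis2021-Plancherel-GLnE-GLnF}*{p.245, line 33}. Your proposal is a from-scratch reconstruction of what lies behind those citations, which is a genuinely different route, and much of it is sound: the passage from trivial $W(G,M)_{\sigma}$ to trivial $R$-group to irreducibility of $i^G_P(\sigma)$, injectivity of $\iota$ via \cref{lemma:tempered-conjugacy}, and the pushforward analysis of $\Irr_{\temp}(G) \setminus \iota(\cR)$ are all correct and do carry honest content.

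But there is a gap exactly where you flag the ``short separate check,'' and the remedy you propose does not fill it. ``The trivial-$R$-group locus carries full Plancherel measure'' is a true but strictly weaker fact than the one you need: $R_{\sigma}$ is a subgroup of $W(G,M)_{\sigma}$ and the two can differ, i.e.\ there are unitary discrete pairs with trivial $R$-group and nontrivial Weyl stabilizer; hence conullity of the trivial-$R$-group locus gives, a priori, no control over where the stabilizer $W(G,M)_{\sigma}$ is trivial. What you actually need — and what the paper is extracting from \cref{theorem-generic-irreducibility} — is that on each connected component $C$ of $\Theta_{\disc}(G)_{\temp}$ no nontrivial element of $W(G,M)_{C}$ fixes the whole torus $\cX_{\unit}(M)/\cX_{\unit}(M)_{\sigma}$, so that each fixed-point set is a \emph{proper} closed real-analytic subset. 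That is a different assertion than the $R$-group full-measure fact, and you have not supplied an argument for it. A secondary shortfall: your treatment of openness of $\iota$ on $\cR$ (``the inducing datum is recovered continuously from the character'') is only a gesture; this is precisely the step the paper delegates to \cite{Beuzart-Plessis2021-Plancherel-GLnE-GLnF}, and as written your version leaves the continuity of $\iota^{-1}$ unproved.
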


\begin{proof}
      The first statement follows from \cref{theorem-generic-irreducibility}.
      The second statement follows from the argument in \cite{Beuzart-Plessis2021-Plancherel-GLnE-GLnF}*{p.245, line 33}.
\end{proof}

\begin{definition}
     By the previous lemma, the measure $\mu^G$ defines a Borel measure on $\Irr_{\temp}(G)$ and also on $\Irr_{\unit}(G)$.
     The measures are also denoted by $\mu^G$.
\end{definition}

\section{A variant of the Stone-Weierstrass theorem}

In this section, we prove the main approximation theorem \cref{theorem:main-theorem} which will be used to complete the proof of the Sauvageot density principle.

Before proving the approximation theorem, we recall the gap pointed out in \cite{NelsonVenkatesh2021-orbitmethod}*{p.159, footnote}.

\begin{remark}\label{remark:counterexample}
    In \cite{Sau97}*{p.181, line 4}, for any function $f$ as \cite{Sau97}*{p.180, (1)} and any positive number $\epsilon > 0$, it is stated that there exist functions $g, h \in \cA(G)$ such that 
    \begin{align*}
        \abs{f - g \circ \mathrm{inf}_G} 
        &\leq h \circ \mathrm{inf}_G \\
        \mu^G(h \circ \mathrm{inf}_G) 
        &< \epsilon.
    \end{align*}
      
    We assume that $G$ has a pair of discrete series representations $\pi_1 \neq \pi_2$ such that $\mathrm{inf}_G(\pi_1) = \inf_{G}(\pi_2)$. 
    Such a pair exists if $G = G_2$, see \cite{GS23theta}*{Proposition 3.2, (ii)}.
    We take the characteristic function of $\{\pi_1\}$ as $f$ and this satisfies the condition $(1)$ of \cite{Sau97}*{p.180}.
    If there exist functions $g, h \in \cA(G)$ such that 
    \begin{align*}
        \abs{f - g \circ \mathrm{inf}_G} 
        &\leq h \circ \mathrm{inf}_G \\
        \mu^G(h \circ \mathrm{inf}_G) 
        &< \epsilon,
    \end{align*}
    then, we have 
    \begin{align*}
        \abs{1 - g \circ \mathrm{inf}_G(\pi_1)} &\leq h \circ \mathrm{inf}_G(\pi_1), \\
        \abs{g \circ \mathrm{inf}_G(\pi_2)} &\leq h \circ \mathrm{inf}_G(\pi_2).
    \end{align*}
    This implies that we have $h \circ \inf_G(\pi_1) \geq \frac{1}{2}$.
    Finally, we obtain $\mu^G(h \circ \mu^G) \geq \frac{1}{2} d(\pi_1)$, where $d(\pi_1)$ is the formal degree of $\pi_1$ and this gives a contradiction if $\epsilon < \frac{1}{2} d(\pi_1)$.
\end{remark}

Thus, to approximate the functions in \cite{Sau97}*{p.180, (1)} in the previous sense, we have to utilize more functions. 

\subsection{Approximation on locally compact Hausdorff spaces}

Let $X, Y$ be locally compact Hausdorff spaces and $p \colon X \to Y$ be a proper map.
Let $X^{\infty}=X \coprod \{\infty_{X}\}$ denote the one-point compactification of $X$.
Then, we can extend $p$ to a continuous map $p^{\infty} \colon X^{\infty} \to Y^{\infty}$.
We have the identification of the function spaces:
\[
  C_0(X) \overset{\cong}{\to} \{f \in C(X^{\infty}) \st f(\infty_{X})=0 \} \colon f \mapsto f
\]
by the extension $f(\infty_X) = 0$.
This isomorphism preserves the norms $||\cdot||_{\infty}$.
The space $C_0(X)$ has the norm, so it defines a metric on this space.
Also we can take the distance 
\[
d_{X}(f, S) = \inf_{g \in S}{d_{X}(f, g)}
\] 
for any element $f \in C_0(X)$ and subset $S \subset C_0(X)$.

We have the Stone-Weierstrass theorem for subalgebras of $C_0(Y)$, for example, see \cite{Rud91}*{p.122}.
We utilize the following module version of the Stone-Weierstrass theorem, which is a special case of theorems proved by Nachbin and Prolla.
We give a proof of the result to make this paper as self-contained as possible.

\begin{lemma}[c.f. \cite{Nachbin1965}*{p.54, Theorem 1}, \cite{Prolla1994}*{Theorem 9}]\label{lemma:module-version-Stone-Weierstrass}
   Let $A$ be a dense $\C$-subalgebra of $C_0(Y)$.
   Let $p^{*}A$ be a $\C$-subalgebra of $C_0(X)$ which consists of the pullback of elements in $A$ by $p$.
   Let $M$ be a subspace of $C_0(X)$ which is a $p^*A$-module with respect to pointwise multiplication.
   Then, for any $f \in C_0(X)$ we have
   \[
     d_X(f, M) = \sup_{y \in Y}{d_{p^{-1}(y)}(f|p^{-1}(y), M|p^{-1}(y))},
   \]
   where  $M|p^{-1}(y)$ denotes the set of functions obtained by restriction of elements in $M$ to the compact set $p^{-1}(y)$ and the distance in the right hand side is taken in $C(p^{-1}(y))$.
\end{lemma}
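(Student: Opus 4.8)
The plan is to prove the two inequalities separately. The inequality
\[
  d_X(f, M) \geq \sup_{y \in Y}{d_{p^{-1}(y)}(f|p^{-1}(y), M|p^{-1}(y))}
\]
is the easy direction: for any $m \in M$ and any $y \in Y$ one has $\|(f-m)|p^{-1}(y)\|_\infty \leq \|f-m\|_\infty$, so $d_{p^{-1}(y)}(f|p^{-1}(y), M|p^{-1}(y)) \leq \|f - m\|_\infty$; taking the supremum over $y$ and then the infimum over $m$ gives the claim. (If some fiber is empty the corresponding term is interpreted as $0$, so nothing is lost.)

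For the reverse inequality, write $c$ for the right-hand side and fix $\epsilon > 0$; I must produce $m \in M$ with $\|f - m\|_\infty \leq c + \epsilon$. First I would reduce to working on the one-point compactifications: using the isometric identifications $C_0(X) \cong \{g \in C(X^\infty) : g(\infty_X) = 0\}$ and $C_0(Y) \cong \{g \in C(Y^\infty) : g(\infty_Y) = 0\}$, and the extension $p^\infty \colon X^\infty \to Y^\infty$, the fiber $(p^\infty)^{-1}(\infty_Y)$ is the one-point compactification of $p^{-1}(Y \setminus K)$-type sets; the key point is that since $f(\infty_X) = 0$ and $0 \in M|(p^\infty)^{-1}(\infty_Y)$ (as $M \subset C_0(X)$ forces every restriction to vanish at $\infty_X$), the fiber over $\infty_Y$ also satisfies the local approximation bound with constant $\leq c$. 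So it suffices to prove: for the proper — hence, on compactifications, continuous closed surjective — map $p^\infty$, if for every $y \in Y^\infty$ there is $m_y \in M$ with $\|(f - m_y)|(p^\infty)^{-1}(y)\|_\infty < c + \epsilon$, then there is a single $m \in M$ with $\|f - m\|_\infty \leq c + \epsilon$. This is exactly the Nachbin–Prolla localization argument.

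The core gluing step runs as follows. For each $y \in Y^\infty$ pick $m_y \in M$ as above; the set $U_y = \{x \in X^\infty : |f(x) - m_y(x)| < c + \epsilon\}$ is open and contains the compact fiber $(p^\infty)^{-1}(y)$, so by properness (the "tube lemma" for closed maps, or directly: $p^\infty$ is closed, so $Y^\infty \setminus p^\infty(X^\infty \setminus U_y)$ is an open neighborhood $V_y$ of $y$) there is an open $V_y \ni y$ with $(p^\infty)^{-1}(V_y) \subset U_y$. By compactness of $Y^\infty$ choose a finite subcover $V_{y_1}, \dots, V_{y_n}$, and let $\{\phi_1, \dots, \phi_n\}$ be a partition of unity on $Y^\infty$ subordinate to it with $\sum \phi_i = 1$; note $\phi_i \geq 0$, $\phi_i \in C(Y^\infty)$. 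Here is the one subtlety that forces the use of the \emph{dense} subalgebra $A$ rather than $A$ itself: the $\phi_i$ need not lie in (the unitalization of) $A$, so I approximate. Since $A$ is dense in $C_0(Y)$, pick $a_i \in A$ with $\|a_i - \phi_i\|_\infty$ small, say $< \delta$ for a $\delta$ to be chosen in terms of $\epsilon$, $n$, and $\|f\|_\infty + \max_i \|m_{y_i}\|_\infty$. Set $m = \sum_{i=1}^n (a_i \circ p^\infty)\, m_{y_i}$, which lies in $M$ because $M$ is a $p^*A$-module. Then for $x \in X^\infty$, writing $y = p^\infty(x)$,
\[
  |f(x) - m(x)| \leq \Bigl|\sum_i \phi_i(y)\bigl(f(x) - m_{y_i}(x)\bigr)\Bigr| + \sum_i |a_i(y) - \phi_i(y)|\,|f(x) - m_{y_i}(x)|,
\]
and the first sum is $\leq c + \epsilon$ because each term with $\phi_i(y) \neq 0$ has $y \in V_{y_i}$, hence $x \in U_{y_i}$, hence $|f(x) - m_{y_i}(x)| < c + \epsilon$, and $\sum \phi_i(y) = 1$; the second sum is $\leq n\delta(\|f\|_\infty + \max_i\|m_{y_i}\|_\infty)$, which is $< \epsilon$ for $\delta$ small. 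This gives $\|f - m\|_\infty \leq c + 2\epsilon$, and since $\epsilon$ was arbitrary we conclude $d_X(f, M) \leq c$.

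The main obstacle — really the only nonroutine point — is the interaction of the noncompactness with the module structure: one must pass to one-point compactifications to run the partition-of-unity argument, and there verify both that $M$ still restricts compatibly over $\infty_Y$ (using $M \subset C_0(X)$ so all restrictions vanish at $\infty_X$, making the fiber-over-$\infty_Y$ condition automatic with constant $\leq c$) and that the approximants $a_i \circ p^\infty$ genuinely land in $M$ (which is why $M$ is asked to be a module over $p^*A$ for $A$ a subalgebra of $C_0(Y)$, not over $p^*C(Y^\infty)$). Everything else — the tube lemma for proper maps, the partition of unity on the compact Hausdorff space $Y^\infty$, the triangle-inequality estimate — is standard.
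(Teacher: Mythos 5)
Your proof is essentially the same as the paper's: compactify $X$ and $Y$, use properness (the closed-map form of the tube lemma) to get fiber neighborhoods, glue with a partition of unity on $Y^\infty$, and swap the partition functions for elements coming from $A$ using density. One small slip: after passing to one-point compactifications, the partition-of-unity functions $\phi_i$ on $Y^\infty$ satisfy $\sum_i \phi_i = 1$, so at least one $\phi_i$ has $\phi_i(\infty_Y) \neq 0$; such a $\phi_i$ cannot be uniformly approximated by $a_i \in A \subset C_0(Y)$, since every element of $A$ vanishes at $\infty_Y$. You should pick $a_i$ from the \emph{unitalization} $A + \C\cdot 1$ (which is dense in $C(Y^\infty)$), and observe that $p^*(a_i' + c_i)\,m_{y_i} = p^*(a_i')\,m_{y_i} + c_i\,m_{y_i}$ still lies in $M$ because $M$ is both a $p^*A$-module and a $\C$-vector space. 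You did flag the unitalization in passing, and the paper sidesteps the issue up front by declaring that after compactification one "can assume that the space $A$ and $M$ contain constant functions"; with that one-line fix, your argument matches the paper's proof line for line (the paper first forms $g = \sum p^*(\rho_i) g_i$ and then approximates it by $h = \sum p^*(\sigma_i) g_i$, whereas you fold these two steps into a single estimate, which is an immaterial rearrangement).
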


\begin{proof}
    By one-point compactification, we can assume that $X$ and $Y$ are compact and the space $A$ and $M$ contain constant functions.
    Also, by replacing $Y$ by the image of $X$, we can assume that $p$ is surjective.
    
    We obviously have the inequality
    \[
      d_X(f, M) \geq \sup_{y \in Y}{d_{p^{-1}(y)}(f|p^{-1}(y), M|p^{-1}(y))},
    \]
    so we will prove the converse.
    Let $D$ denote the right hand side of this inequality and take $\epsilon >0$.
    It suffices to show that left hand side is less than $D+\epsilon$.
    
    By the definition of supremum, for each $y \in Y$, there exists an element $g_y \in M$ such that
    \[
     d_{p^{-1}(y)}(f|p^{-1}(y), g_y|p^{-1}(y)) < D+\epsilon.
    \]
    By a simple argument using $p$ is a closed map, we can take an open neighborhood $y \in V_y$ such that on $U_y=p^{-1}(V_y)$ we have 
    \[
      d_{U_y}(f|U_y, g_y|U_y) < D+\epsilon.
    \]
    Since the open subsets $V_y$ cover the space $Y$, we can take a finite open subcover $\{ V_i=V_{y_i} \, | \, i=1,2, ..., n \}$.
    We can also take a partition of unity $\rho_{i}$ associated with this cover.

    Set $g_i=g_{y_i}$ and
    \[
       g = \sum_{i} p^{*}(\rho_{i}) g_{i} \in C_0(X).
    \]
    Then, we have
    \begin{align*}
       \lvert f(x)-g(x) \rvert 
       &= \lvert \sum_{i} \rho_i(p(x))(f(x)-g_{i}(x)) \rvert \\
       &\leq \sum_{i} \rho_i(p(x)) \lvert f(x)-g_i(x) \rvert \\
       &< (\sum_{i} \rho_i)(p(x)) (D+\epsilon) \\
       &= D+\epsilon.
    \end{align*}
    By usual Stone--Weierstrass' theorem, we can approximate each $\rho_i$ by an element $\sigma_i$ of $A$.
    As $M$ is a $p^{*}A$-module by multiplication, the function
    \[
       h = \sum_{i} p^{*}(\sigma_{i}) g_{i} 
    \]
    belongs to $M$ and approximates $g$.
    Hence the result.
\end{proof}

\begin{remark}
    The results by Nachbin and Prolla can be applied to more general settings, see \cite{Nachbin1965}*{p.54, Theorem 1} and \cite{Prolla1994}*{Definition 2, Theorem 9}.
\end{remark}

\subsection{Approximation on measure spaces}

Let $X, Y$ be locally compact Hausdorff spaces and $p \colon X \to Y$ be a proper map.
Let $\mu_X$ be a Radon measure on $X$ and we denote the pushforward measure $p_*\mu_X$ by $\mu_Y$.
Then, $\mu_Y$ is also a Radon measure on $Y$.

\begin{definition}
   Let $(Z, \mu)$ be a topological space with a positive Borel measure.
   A function $\phi \colon Z \to \C$ is said to be $\mu$-Riemann integrable i it is bounded, compactly supported and its set of discontinuous points is $\mu$-null set. 
   We denote the set of $\mu$-Riemann integrable functions by $\mathcal{RI}(Z, \mu)$.
   We assume that $Z$ is a locally compact Hausdorff space and $\mu$ is a positive Radon measure on $Z$.
   Let $\cL^1(Z, \mu)$ denote the set of $\mu$-integrable functions on $Z$.
\end{definition}

\begin{example}
    The characteristic function $\mathbf{1}_{U}$ of an open set $U$ is $\mu$-Riemann integrable if and only if $U$ is relatively compact and its boundary $\partial U$ is a $\mu$-null set.
\end{example}

\begin{definition}\label{definition:topology}
     Let $M$ be a subspace of $\cL^1(X, \mu_X)$ such that any element $m \in M$ is $\mu_X$-integrable and stable under pointwise complex conjugation.
     Let $\tau_M$ denote the linear topology on $\cL^1(X, \mu_X)$ whose fundamental system of neighborhoods of $0$ is generated by 
     \begin{multline*}
        U_{\epsilon} = 
        \{ h \in \cL^1(X, \mu_X) \st \\ \text{There exists $m \in M$ such that } \abs{h} \leq m, \mu_X(m) < \epsilon  \},
     \end{multline*}
     for $\epsilon >0$.
\end{definition}

\begin{lemma}\label{lemma:closure-invariant}
    Let $\overline{M}^{\tau_M}$ denote the closure of $M$ with respect to $\tau_M$.
    Then, we have $\tau_M = \tau_{\overline{M}^{\tau_M}}$.
\end{lemma}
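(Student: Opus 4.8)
The plan is to reduce the asserted equality of topologies to a comparison of their fundamental systems of neighborhoods of $0$, and then to show that every ``dominating function'' coming from the $\tau_M$-closure of $M$ can be replaced, up to an arbitrarily small increase of its integral, by a dominating function drawn from $M$ itself.

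First I would dispatch some bookkeeping. Since $|\overline h| = |h|$, complex conjugation maps each basic neighborhood $U_\epsilon$ of $0$ (as in \cref{definition:topology}) into itself, hence is $\tau_M$-continuous; being an involution fixing $M$, it therefore fixes $N := \overline{M}^{\tau_M}$, so $N$ is conjugation-stable and $N \subseteq \cL^1(X,\mu_X)$, and thus $\tau_N = \tau_{\overline{M}^{\tau_M}}$ is well-defined. Writing $U_\epsilon$ and $U'_\epsilon$ for the basic neighborhoods of $0$ attached to $M$ and to $N$, one has $U_\epsilon \cap U_{\epsilon'} \supseteq U_{\min(\epsilon,\epsilon')}$ (and likewise for the primed family), so the neighborhood filter of $0$ for $\tau_M$ (resp. $\tau_N$) is precisely the set of all supersets of the $U_\epsilon$ (resp. $U'_\epsilon$); both topologies being translation-invariant, it therefore suffices to show that each $U'_\epsilon$ contains some $U_\delta$ and each $U_\epsilon$ contains some $U'_\delta$.

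One of these is immediate: from $M \subseteq N$ we get $U_\epsilon \subseteq U'_\epsilon$, so each $U'_\epsilon$ is a $\tau_M$-neighborhood of $0$ and $\tau_N \subseteq \tau_M$. For the converse I would prove $U'_{\epsilon/2} \subseteq U_\epsilon$ for every $\epsilon > 0$. Given $h \in U'_{\epsilon/2}$, choose $n \in N$ with $|h| \le n$ and $\mu_X(n) < \epsilon/2$ (the inequality $|h| \le n$ forces $n$ to be real and $\ge 0$). Because $n$ lies in the $\tau_M$-closure of $M$, the neighborhood $n + U_{\epsilon/4}$ meets $M$: there is $m_0 \in M$ with $m_0 - n \in U_{\epsilon/4}$, i.e.\ some $m_1 \in M$ with $|m_0 - n| \le m_1$ and $\mu_X(m_1) < \epsilon/4$. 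Using conjugation-stability of $M$ and that $n$ is real, we may replace $m_0$ by $\re m_0 \in M$ and so assume $m_0$ is real. Then $m := m_0 + m_1 \in M$ satisfies $|h| \le n \le m_0 + m_1 = m$ pointwise (in particular $m \ge 0$), and $\mu_X(m) = \mu_X(m_0) + \mu_X(m_1) \le \mu_X(n) + 2\mu_X(m_1) < \epsilon$, using $m_0 \le n + m_1$ pointwise. Hence $h \in U_\epsilon$, so $U_\epsilon \supseteq U'_{\epsilon/2}$ is a $\tau_N$-neighborhood of $0$; therefore $\tau_M \subseteq \tau_N$, and the proof is complete.

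The crux is this last replacement step: it is exactly here that the hypothesis that $M$ is a \emph{linear subspace} is used, since adding back the error term $m_1$ — which lies in $M$ precisely because the $\tau_M$-neighborhoods are cut out by dominators from $M$ — keeps us inside $M$. Conjugation-stability of $M$ enters only to guarantee that $\tau_N$ is defined and to permit passing to real representatives; the remaining points are routine.
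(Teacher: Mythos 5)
Your proof is correct and follows essentially the same route as the paper's: both show $U_{\epsilon}\subset U'_{\epsilon}$ trivially, and for the reverse inclusion replace the dominating function $n\in\overline{M}^{\tau_M}$ by an approximant $m_0\in M$ plus the error bound $m_1\in M$, controlling the integral by the same two-sided estimate. The only difference is cosmetic: the paper reduces at the outset to the real-valued subspace $M_{\R}$, whereas you stay in the complex setting and instead verify directly that $\overline{M}^{\tau_M}$ is conjugation-stable (so that $\tau_{\overline{M}^{\tau_M}}$ is defined) and pass to $\re m_0$, which is a sensible small addition.
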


\begin{proof}
    As $M$ is stable under complex conjugation, it suffices to show the lemma after replacing $\cL^1(X, \mu_X)$ with the subspace $\cL^1(X, \mu_X, \R)$ of real valued functions and $M$ with the subspace $M_{\R}$ of real valued functions in $M$.
    If we consider the topology defined by $\overline{M}^{\tau_M}$, we write the open subset $U_{\epsilon}$ as $U'_{\epsilon}$, i.e. 
    \begin{multline*}
        U'_{\epsilon} 
        = 
        \{ h \in \cL^1(X, \mu_X) \st \\ \text{There exists $m' \in \overline{M}^{\tau_M}$ such that } \abs{h} \leq m', \mu_X(m') < \epsilon  \}.
    \end{multline*}
    Then, it is trivial that we have $U_{\epsilon} \subset U'_{\epsilon}$.
    We claim that if $\epsilon' < \epsilon$, then we have $U'_{\epsilon'}\subset U_{\epsilon}$.
    We assume that a function $h \in \cL^1(X, \mu_X)$ is in $U'_{\epsilon'}$.
    Then, we can find $m' \in \overline{M}^{\tau_M}$ and we have 
    \begin{align*}
        &\abs{h} \leq m', \\
        &\mu_X(m') < \epsilon'.
    \end{align*}
    We set $\delta = \epsilon - \epsilon'$. 
    Then, we have $\delta > 0$.
    As the function $m'$ is in $\overline{M}^{\tau_M}$, there exist functions $n_1 \in M$ and $n_2 \in M$ such that $\abs{m' - n_1} \leq n_2$ and $\mu_X(n_2) < \frac{1}{2}\delta$.
    Then, we have 
    \begin{align*}
        \abs{h} \leq n_1 + n_2
    \end{align*}
    and 
    \begin{align*}
        \mu_X(n_1 + n_2) < \epsilon' + 2 \times \frac{1}{2} \delta = \epsilon.
    \end{align*}
    Thus, we have $U_{\epsilon'}' \subset U_{\epsilon}$.
\end{proof}

\begin{corollary}\label{corollary:double-closure}
   We have $\overline{\overline{M}^{\tau_M}}^{\cT_{\overline{M}^{\tau_M}}} = \overline{M}^{\tau_M}$.
\end{corollary}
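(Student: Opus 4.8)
The plan is to deduce this immediately from \cref{lemma:closure-invariant}, the only genuine content being a verification that the hypotheses of that lemma transfer to $\overline{M}^{\tau_M}$ in place of $M$. First I would observe that $\overline{M}^{\tau_M}$ is again a linear subspace of $\cL^1(X, \mu_X)$, since the closure of a linear subspace in a linear topology is a linear subspace. Next I would check that $\overline{M}^{\tau_M}$ is stable under pointwise complex conjugation. For this it suffices that the map $h \mapsto \overline{h}$ is $\tau_M$-continuous: since $\abs{h} = \abs{\overline{h}}$, conjugation carries each basic neighbourhood $U_{\epsilon}$ onto itself, hence is a homeomorphism; a continuous linear self-map carrying $M$ into itself carries $\overline{M}^{\tau_M}$ into itself. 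Therefore the topology $\tau_{\overline{M}^{\tau_M}}$ (written $\cT_{\overline{M}^{\tau_M}}$ in the statement of the corollary) is well defined.

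Then I would invoke \cref{lemma:closure-invariant} directly: it yields $\tau_M = \tau_{\overline{M}^{\tau_M}}$. Consequently the two closures appearing in the corollary are taken with respect to one and the same topology, so
\[
   \overline{\overline{M}^{\tau_M}}^{\cT_{\overline{M}^{\tau_M}}} = \overline{\overline{M}^{\tau_M}}^{\tau_M} = \overline{M}^{\tau_M},
\]
where the last equality holds because the closure operator of any topology is idempotent: $\overline{M}^{\tau_M}$ is by definition $\tau_M$-closed, hence coincides with its own $\tau_M$-closure.

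There is essentially no obstacle here; the substantive work has already been carried out in \cref{lemma:closure-invariant}, and the present statement is merely the formal consequence that passing to the $\tau$-closure has stabilised. The only point that warrants a moment's care is the verification that $\overline{M}^{\tau_M}$ meets the standing hypotheses of \cref{definition:topology} — namely that it is a linear subspace stable under complex conjugation — so that $\tau_{\overline{M}^{\tau_M}}$ is meaningful in the first place.
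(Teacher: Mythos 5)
Your proof is correct and follows exactly the argument the paper intends (the corollary is stated without a separate proof, as an immediate consequence of \cref{lemma:closure-invariant}): once $\tau_M = \tau_{\overline{M}^{\tau_M}}$, idempotence of the closure operator gives the claim. Your auxiliary checks that $\overline{M}^{\tau_M}$ is again a conjugation-stable subspace, so that $\tau_{\overline{M}^{\tau_M}}$ is well defined, are accurate and are details the paper leaves implicit.
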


The following lemma is \cite{Sau97}*{Lemme 2.3}.
\begin{lemma}\label{lemma:closure-module}
    Let $A \subset C_0(Y)$ be a dense $\C$-subalgebra.
    Let $M \subset \cL^1(X, \mu_X)$ be a $p^*A$-module with respect to pointwise multiplication and stable under complex conjugation, such that any element $m \in M$ is $\mu_X$-integrable.
    We assume that $M$ also satisfies the following condition:
    For any function $m \in M$, there exists a function $n \in M$ such that $\abs{m} \leq n$.

    Then, for any function $m \in M$ and $c \in C_0(Y)$, the function $(p^*c) m$ is in $\overline{M}^{\tau_M}$.
\end{lemma}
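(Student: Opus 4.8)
The plan is to reduce the statement to the module version of the Stone--Weierstrass theorem, \cref{lemma:module-version-Stone-Weierstrass}, after replacing the sup-norm topology there with the topology $\tau_M$ of \cref{definition:topology}, and exploiting \cref{lemma:closure-invariant} to pass between $M$ and $\overline{M}^{\tau_M}$. First I would fix $m \in M$ and $c \in C_0(Y)$, together with $\epsilon > 0$; the goal is to produce $n_1, n_2 \in M$ with $\abs{(p^*c)m - n_1} \le n_2$ and $\mu_X(n_2) < \epsilon$. By hypothesis we may choose $n \in M$ with $\abs{m} \le n$, and again with $\abs{n} \le n'$ for some $n' \in M$; so on the support of $m$ we have a ``dominating'' element of $M$ to play the role that constant functions played in the locally compact Hausdorff setting.

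The key step is to apply \cref{lemma:module-version-Stone-Weierstrass} not to $M$ itself but to the $p^*A$-submodule $M_0 = p^*A \cdot n + \C \cdot n \subset C_0(X)$ — or more robustly, to run the proof of that lemma verbatim with the measure-theoretic bookkeeping in place. Concretely: cover $Y$ by finitely many open sets $V_i$ on each of which $c$ is nearly constant, say $\abs{c - \lambda_i} < \delta$ on $V_i$; take a partition of unity $\rho_i$ subordinate to $\{V_i\}$; approximate each $\rho_i$ uniformly by $\sigma_i \in A$ using the usual Stone--Weierstrass theorem (here is where density of $A$ in $C_0(Y)$ enters); and set
\[
  n_1 = \sum_i \lambda_i\, p^*(\sigma_i)\, m \in M,
\]
which lies in $M$ because $M$ is a $p^*A$-module. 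Then $\abs{(p^*c)m - n_1}$ is bounded pointwise by a sum of three error terms: one from $\abs{c - \lambda_i} < \delta$ (controlled by $\delta \cdot n$), one from $\abs{\sum_i \rho_i - 1}$ off the support (zero, after arranging the cover to exhaust a neighborhood of $\supp m$ via properness of $p$), and one from $\abs{\sigma_i - \rho_i}$ small in sup-norm (controlled by a sup-norm error times $n$). Each of these is dominated by a small scalar multiple of $n$, hence by an element of $M$ of arbitrarily small $\mu_X$-mass once $\delta$ and the approximation errors are taken small relative to $\epsilon / (1 + \mu_X(n))$. Choosing $n_2$ to be this explicit multiple of $n$ (adjusted so the bound is genuinely $\ge$ the error) finishes the verification that $(p^*c)m \in \overline{M}^{\tau_M}$.

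The main obstacle I anticipate is the interface between the topological covering argument and the measure: in \cref{lemma:module-version-Stone-Weierstrass} the sup-norm makes the partition-of-unity estimate immediate, but here I must ensure that after multiplying by $p^*(\sigma_i)$ the resulting functions stay $\mu_X$-integrable and that all the error terms are genuinely dominated by a \emph{single} fixed element of $M$ (namely a scalar multiple of $n$), not merely small in $L^1$ — the topology $\tau_M$ is strictly finer than the $L^1$-topology. This is exactly why the hypothesis ``for any $m \in M$ there exists $n \in M$ with $\abs{m} \le n$'' is indispensable: it supplies the dominating function that lets the uniform estimate be converted into a $\tau_M$-estimate. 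A secondary point to handle carefully is that $c \in C_0(Y)$ need not be compactly supported, so the cover $\{V_i\}$ should be chosen to cover the compact set $p(\supp m)$ together with a single extra open set on which $c$ is small (using $c \to 0$ at infinity), absorbing that contribution into the $\delta \cdot n$ term.
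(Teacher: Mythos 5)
Your proof arrives at the correct conclusion, but you have taken a noticeably more circuitous route than the paper's, and in doing so you have re-solved a problem that the hypotheses have already solved for you. The function you are trying to approximate, namely $(p^*c)m$, has the special feature that its ``hard'' factor $p^*c$ is already a pullback from $Y$: it is constant on fibers of $p$. There is therefore no need to separate points within fibers, and no need to invoke \cref{lemma:module-version-Stone-Weierstrass} or to rebuild an approximant to $c$ out of a partition of unity and locally constant pieces. The hypothesis that $A$ is \emph{dense} in $C_0(Y)$ (in the sup-norm) hands you directly, for any $\epsilon>0$, an $a\in A$ with $\lVert c-a\rVert_\infty\le\epsilon$; then $(p^*a)m\in M$ since $M$ is a $p^*A$-module, and
\[
\lvert (p^*c)m-(p^*a)m\rvert=\lvert p^*(c-a)\rvert\,\lvert m\rvert\le\epsilon\,n,
\]
where $n\in M$ is the dominating element from the hypothesis. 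Since $\mu_X(\epsilon n)=\epsilon\,\mu_X(n)$ can be made arbitrarily small, $(p^*c)m\in\overline{M}^{\tau_M}$. That is the whole of the paper's proof.

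Your partition-of-unity construction of $n_1=\sum_i\lambda_i\,p^*(\sigma_i)\,m$ effectively builds an approximant $\sum_i\lambda_i\sigma_i$ to $c$ by hand, and then verifies that the uniform error converts into a $\tau_M$-error via domination by $n$ --- that last observation is correct and is the crux of the lemma, so your proof is not wrong. But all the covering/partition-of-unity machinery, the appeal to \cref{lemma:module-version-Stone-Weierstrass}, and the secondary discussion of handling $c$ near infinity, are superfluous: they reconstruct the density of $A$ in $C_0(Y)$, which is simply assumed. What this buys you is nothing the hypotheses do not already give; what it costs is a longer argument with more places for things to slip (for instance, the auxiliary submodule $M_0=p^*A\cdot n+\C\cdot n$ you float is not obviously a $p^*A$-module because of the $\C\cdot n$ summand, though you wisely hedge and say you would instead re-run the proof verbatim). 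The one genuinely valuable point you make explicitly --- that the domination hypothesis is ``indispensable'' precisely because uniform smallness must be traded for $\tau_M$-smallness, and $\tau_M$ is strictly finer than the $L^1$-topology --- is exactly the insight driving the paper's one-line argument; it would be cleaner to use it directly rather than burying it under the covering argument.
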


\begin{proof}
    We take a function $n \in M$ as in the hypothesis of the lemma. 
    There exists a function $a \in A$ such that $\abs{c-a} \leq \epsilon$ with $\epsilon$ sufficiently small with respect to $\mu_X(n)$.
    Then, we have 
    \begin{align*}
        \abs{(p^*c)m - (p^*a)m} \leq \epsilon n.
    \end{align*}
    Hence the result.
\end{proof}

\begin{remark}\label{remark:non-Hausdorff}  
   Note that the proof of \cref{lemma:closure-module} and \cref{corollary:double-closure} does not use the Hausdorff property of $X$.
\end{remark}

\begin{theorem}\label{theorem:main-theorem}
  Let $A$ be a dense $\C$-subalgebra of $C_0(Y)$.
  Let $M \subset C_0(X) \cap \cL^1(X, \mu_X)$ be a $p^*A$-module with respect to pointwise multiplication.
  We assume the following conditions:
   \begin{enumerate}
   \item $M$ is stable under pointwise complex conjugation.
   \item The restriction of the elements of $M$ to $p^{-1}(y)$ is dense in $C(p^{-1}(y))$ for every $y \in Y$.
   \item For any function $m \in M$, there exists a function $n \in M$ such that $\abs{m} \leq n$.
   \end{enumerate}
   Then, we have $\mathcal{RI}(X, \mu_X) \subset \overline{M}^{\tau_M}$.
\end{theorem}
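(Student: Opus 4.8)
The plan is to establish the inclusion in two stages: first $C_c(X) \subset \overline{M}^{\tau_M}$, and then, via the description of Riemann-integrable functions through their upper and lower semicontinuous envelopes, $\mathcal{RI}(X,\mu_X) \subset \overline{M}^{\tau_M}$. The tools will be \cref{lemma:module-version-Stone-Weierstrass} (to uniformly approximate a given $f \in C_c(X)$ by elements of $M$, hypothesis (2) making the right-hand side there vanish), \cref{lemma:closure-module} (to multiply elements of $M$ by pullbacks of functions in $C_0(Y)$ and stay inside $\overline{M}^{\tau_M}$), and \cref{lemma:closure-invariant} together with \cref{corollary:double-closure} (so that dominating functions may be taken from $\overline{M}^{\tau_M}$ itself and $\overline{M}^{\tau_M}$ is stable under the iterated approximation).

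A preliminary step is the domination statement: for every compact $L \subset X$ there is $n \in \overline{M}^{\tau_M}$ with $n \geq \mathbf{1}_L$. For each $y$ in the compact set $p(L)$, hypothesis (2) together with stability under complex conjugation (hypothesis (1)) produces a real-valued $m_y \in M$ with $m_y > 3/2$ on the compact fiber $p^{-1}(y)$, and hypothesis (3) then yields $n_y \in M$ with $n_y \geq \abs{m_y} \geq 0$, so that $n_y > 1$ on $p^{-1}(y)$. Since $n_y$ is continuous and $p$ is a closed map (being proper), $n_y > 1$ on $p^{-1}(V_y)$ for some relatively compact open $V_y \ni y$; this is exactly the closed-map argument used in the proof of \cref{lemma:module-version-Stone-Weierstrass}. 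Covering $p(L)$ by finitely many $V_{y_1}, \dots, V_{y_k}$, adjoining $V_0 = Y \setminus p(L)$, and choosing a subordinate partition of unity $\rho_0, \dots, \rho_k$ with $\rho_i \in C_c(Y)$ for $i \geq 1$, the function $n = \sum_{i=1}^{k} (p^{*}\rho_i)\, n_{y_i}$ is nonnegative, is $\geq 1$ on $L$, and lies in $\overline{M}^{\tau_M}$ by \cref{lemma:closure-module} and the linearity of $\overline{M}^{\tau_M}$.

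For the first stage, fix $f \in C_c(X)$ and $\epsilon > 0$. By \cref{lemma:module-version-Stone-Weierstrass} and hypothesis (2) we have $d_X(f, M) = 0$, so there is $g \in M$ with $\| f - g \|_{\infty} < \delta$ for a $\delta$ to be chosen below. Pick $\chi \in C_c(Y)$ with $0 \leq \chi \leq 1$ and $\chi \equiv 1$ on a neighborhood of $p(\supp f)$, and set $L_1 = p^{-1}(\supp \chi)$, which is compact. A short case analysis --- on $p^{-1}(\{\chi = 1\})$ the function $f - (p^{*}\chi) g$ equals $f - g$; off that set $f$ vanishes and $\abs{(p^{*}\chi) g} \leq \abs{g} < \delta$; outside $L_1$ it vanishes identically --- shows $\abs{f - (p^{*}\chi) g} \leq \delta\, \mathbf{1}_{L_1}$. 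Taking $n \in \overline{M}^{\tau_M}$ with $n \geq \mathbf{1}_{L_1}$ from the preliminary step, and then $\delta$ small enough that $\delta\, \mu_X(n) < \epsilon$, the function $m = \delta n$ lies in $\overline{M}^{\tau_M}$, dominates $\abs{f - (p^{*}\chi) g}$, and satisfies $\mu_X(m) < \epsilon$, while $(p^{*}\chi) g \in \overline{M}^{\tau_M}$ by \cref{lemma:closure-module}. Hence $f \in (p^{*}\chi) g + U_{\epsilon}$ for the topology $\tau_{\overline{M}^{\tau_M}} = \tau_M$; as $\epsilon$ was arbitrary, \cref{corollary:double-closure} gives $f \in \overline{M}^{\tau_M}$, so $C_c(X) \subset \overline{M}^{\tau_M}$. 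For the second stage, let $\phi \in \mathcal{RI}(X,\mu_X)$, which by hypothesis (1) we may assume real-valued; after adding a suitable multiple of a cutoff function to reduce to the case $\phi \geq 0$, the outer regularity of the Radon measure $\mu_X$ together with the fact that the set of discontinuities of $\phi$ is $\mu_X$-null (equivalently, the upper and lower semicontinuous envelopes of $\phi$ coincide $\mu_X$-almost everywhere) yields $g_1, g_2 \in C_c(X)$ with $g_1 \leq \phi \leq g_2$ and $\mu_X(g_2 - g_1) < \epsilon$. Then $g_1$ and $g_2 - g_1$ lie in $\overline{M}^{\tau_M}$ by the first stage, $\abs{\phi - g_1} \leq g_2 - g_1$, and $\mu_X(g_2 - g_1) < \epsilon$, so $\phi - g_1 \in U_{\epsilon}$; letting $\epsilon \to 0$ and invoking \cref{corollary:double-closure} once more gives $\phi \in \overline{M}^{\tau_M}$.

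The hard part will be the first stage: upgrading the uniform approximation coming from \cref{lemma:module-version-Stone-Weierstrass} to an approximation in the much coarser topology $\tau_M$. This works only because the error $f - (p^{*}\chi) g$ can be confined to a fixed compact set and then dominated by an element of $\overline{M}^{\tau_M}$, which is precisely where properness of $p$, the fiberwise density hypothesis (2), the module-closure hypothesis (3), and \cref{lemma:closure-module} have to be used together --- both in the preliminary domination statement and in the cutoff argument. The semicontinuous-envelope reduction in the second stage is routine real analysis, although it should be carried out with care since $X$ is only assumed locally compact Hausdorff and $\mu_X$ only Radon.
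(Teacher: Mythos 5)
Your proof is correct and uses the same core ingredients as the paper (the module Stone--Weierstrass lemma for uniform approximation, properness to confine errors to a fixed compact, domination constructed from hypotheses (2)--(3), \cref{lemma:closure-module} for cutoffs, and \cref{corollary:double-closure}). The only real difference is organizational: you factor the argument through the intermediate claim $C_c(X) \subset \overline{M}^{\tau_M}$ and then deduce the result for $\mathcal{RI}(X,\mu_X)$ via the sandwich $g_1 \le \phi \le g_2$ by continuous compactly supported functions, whereas the paper carries out the sandwich and the uniform approximation simultaneously, multiplying by a single cutoff $p^*c$ and estimating $\abs{\phi - (p^*c)m_1}$ in one pass. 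Your two-stage version is a bit more modular and makes the role of each hypothesis clearer (the preliminary domination step isolates exactly where (1), (2), (3) and properness enter), at the cost of an extra invocation of \cref{corollary:double-closure}; the paper's one-pass version is shorter but folds the same domination lemma into the construction of the function $m_c \in M$ with $p^*c \le m_c$. The appeal to outer regularity and the semicontinuous-envelope characterization of Riemann integrability in your second stage is precisely the content of the Bourbaki reference \cite{Bou04}*{IV 12 Lemma 5} that the paper cites, so no new input is needed there.
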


\begin{proof}
   It suffices to show that we have $\mathcal{RI}(X, \mu_X) \subset \overline{\overline{M}^{\tau_M}}^{\cT_{\overline{M}^{\tau_M}}}$ by \cref{corollary:double-closure}.
   We will write down this condition.
   Take any function $\phi \in \mathcal{RI}(X, \mu_X)$ and any positive number $\epsilon > 0$.
   We want to show that there exist functions $m'_1, m'_2 \in \overline{M}^{\tau_M}$ such that we have 
   \begin{align*}
       \abs{\phi - m'_1} &\leq m'_2 \\
       \mu_X(m'_2) &< \epsilon.
   \end{align*}
  
   We fix $\epsilon > 0$.
   By assumption, the support of $\phi$ is compact. 
   Thus, we can take a positive function $c \in C_c(Y)$ such that on the support of $\phi$, we have $p^*c  = 1$.
   By the assumptions $(2), (3)$ for $M$, it is easy to find a positive function $m_c \in M$ such that we have $p^*c \leq m_c$ on $X$. 
   Indeed, we claim that for any point $x \in X$, we can find $m_x \in M$ such that $m_x(x) > 0$.
   This claim clearly proves the existence of the function $m_c$.
   By the assumption $(2)$, we can take a function $n_x \in M$ such that we have $n_x(x) \neq 0$.
   Then, the assumption $(3)$ implies that there exists $m_x \in M$ such that we have $\abs{n_x} \leq m_x$.
   We have $m_x(x) \geq \abs{n_x(x)} > 0$ and this proves the claim.

   We take a positive number $\epsilon_1 > 0$ such that 
   \begin{align*}
         \epsilon_1 < \frac{1}{3||m_c||_{\infty} + \mu_X(m_c)} \epsilon.
   \end{align*}
   By \cite{Bou04}*{IV 12 Lemma 5}, we obtain functions $c_1, c_2 \in C_c(X)$ such that 
     \begin{align*}
         \abs{\phi - c_1} &\leq c_2    \\
         \mu_X(c_2) &\leq \epsilon_1.
     \end{align*}
    We apply \cref{lemma:module-version-Stone-Weierstrass} to obtain functions $m_1, m_2 \in M$ such that 
    \begin{align*}
        \abs{c_1 - m_1} &< \epsilon_1 \\
        \abs{c_2 - m_2} &< \epsilon_1.
    \end{align*}
    At this stage, we have 
    \begin{align*}
        \abs{\phi - (p^*c)m_1} 
        &\leq (p^*c)\abs{\phi-c_1} + (p^*c)\abs{c_1-m_1} \\
        &\leq (p^*c)m_2 + (p^*c)\abs{c_1 - m_1} + (p^*c)\abs{c_2 - m_2} \\
        &\leq (p^*c)m_2 + \epsilon_1 m_c + \epsilon_1 m_c \\
        &= (p^*c)m_2 + 2 \epsilon_1 m_c.
    \end{align*}
    Here, we have 
    \begin{align*}
        \mu_X((p^*c)m_2) 
        &\leq \mu_X((p^*c)\abs{m_2-c_2}) + \mu_X((p^*c)c_2) \\
        &\leq \epsilon_1 \mu_X(p^*c) + \epsilon_1 || c ||_{\infty}.
    \end{align*}
    Thus, we have 
    \begin{align*}
          \mu_X((p^*c)m_2 + 2 \epsilon_1 m_c) \leq (3||m_c||_{\infty} + \mu_X(m_c)) \epsilon_1 < \epsilon
    \end{align*}
    and $(p^*c)m_1, (p^*c)m_2 + 2 \epsilon_1 m_c \in \overline{M}^{\tau_M}$ by \cref{lemma:closure-module}.
    Hence the result.
\end{proof}

\begin{example}\label{example:main-example}
    We take
    \begin{itemize}
      \item the space $\Theta_{\disc}(G)_{\temp}$ as $X$,
      \item the space $\Theta(G)_{\herm}$ as $Y$,
      \item the map $\inf_G$ as $p$,
      \item the Radon measure $\mu^G$ as $\mu_X$,
      \item the algebra $\cA(G)$ as $A$ and 
      \item the module $\mathcal{FT}(G)$ as $M$, see \cref{definition:Fourier-transform}.
    \end{itemize}
    The space $\mathcal{FT}(G)$ is an $\cA(G)$-module by the Paley-Wiener theorem \cite{ClozelDelorme1990}*{Th\'eor\`eme 1} if $F=\R$ and \cite{Renard2010-p-adiques}*{VI.10.3 Th\'eor\`eme} if $F$ is a $p$-adic field.
    Then, Lemmas in \cite{Sau97}*{Lemme 3.1, 3.4, 3.5}, \cref{lemma:stable-under-complex-conjugate-algebra}, \cref{lemma:properness}, \cref{lemma:tempered-conjugacy}, \cref{lemma:stable-under-complex-conjugate}, and the linear independence of irreducible characters imply that this sextuple satisfies the condition of \cref{theorem:main-theorem}.
    Also, the similar result holds for any reductive group schemes over a product of local fields.
\end{example}

\begin{definition}
    Let $\Irr_{\unit}(G)_{\mathrm{good}}$ denote the subset of $\Irr_{\unit}(G)$ which consists of the parabolic inductions associated to a discrete pair $(M, \sigma)$ with $\sigma$ unitary and regular in the sense of \cite{Sau97}*{p.172, line 30}.
    Let $\Irr_{\unit}(G)_{\mathrm{bad}}$ denote the complement in $\Irr_{\unit}(G)$ of $\Irr_{\unit}(G)_{\mathrm{good}}$.
\end{definition}

\begin{remark}
    We have $\mu^G(\Irr_{\unit}(G)_{\mathrm{bad}}) = 0$ by \cref{theorem-generic-irreducibility}.
\end{remark}

\begin{corollary}\label{corollary:density-parabolically-induced}
    Let $G$ be a reductive group scheme over a product of local fields of characteristic $0$.
    Let $\phi$ be a function in $\mathcal{RI}(\Irr_{\temp}(G), \mu^G)$ and $\epsilon$ be a positive real number.
    Then, there exist functions $h_1, h_2 \in \Cc(G)$ such that we have 
    \begin{align*}
         \abs{\phi - \widehat{h_1}} &\leq \widehat{h_2}, \\
         \mu^G(\widehat{h_2}) & < \epsilon,
    \end{align*} 
    on the set $\Irr_{\unit}(G)_{\mathrm{good}}$.
\end{corollary}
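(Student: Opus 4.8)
The plan is to deduce \cref{corollary:density-parabolically-induced} directly from \cref{theorem:main-theorem} applied to the sextuple of \cref{example:main-example}, after transporting everything through the identifications recorded in \cref{lemma:identification}. First I would recall that by \cref{lemma:identification} the space $\Theta_{\disc}(G)_{\temp}$ agrees with $\Irr_{\temp}(G)$ off a $d\pi$-null set, the null set being the complement of the parabolic inductions of regular discrete series; and this complement is precisely $\Irr_{\unit}(G)_{\mathrm{bad}} \cap \Irr_{\temp}(G)$, which has $\mu^G$-measure zero. Moreover on the good locus the two topologies coincide, so a function $\phi \in \mathcal{RI}(\Irr_{\temp}(G), \mu^G)$ pulls back to a function $\widetilde{\phi}$ on $\Theta_{\disc}(G)_{\temp}$ that is bounded, compactly supported (properness of $\inf_G$ and compactness of $\supp\phi$ are not even needed here, just that $\supp\widetilde\phi$ is contained in the closure of a relatively compact set), and whose discontinuity locus is contained in the discontinuity locus of $\phi$ together with the bad locus, hence $\mu^G$-null. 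Therefore $\widetilde\phi \in \mathcal{RI}(\Theta_{\disc}(G)_{\temp}, \mu^G)$.

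Next I would invoke \cref{theorem:main-theorem}, whose hypotheses are verified in \cref{example:main-example}: we get $\widetilde\phi \in \overline{\mathcal{FT}(G)}^{\tau_{\mathcal{FT}(G)}}$. Unwinding the definition of the topology $\tau_M$ in \cref{definition:topology}, this says that for the given $\epsilon > 0$ there exists $m \in \mathcal{FT}(G)$ with $|\widetilde\phi - m| \le n$ for some $n \in \mathcal{FT}(G)$ satisfying $\mu^G(n) < \epsilon$; equivalently $\widetilde\phi - m$ lies in the neighborhood $U_\epsilon$ of $0$. Writing $m = \widehat{h_1}$ and $n = \widehat{h_2}$ for $K$-finite $h_1, h_2 \in \Cc(G)$ (possible by the definition of $\mathcal{FT}(G)$ in \cref{definition:Fourier-transform}), and then transporting back along the identification of \cref{lemma:identification}, we obtain exactly
\begin{align*}
   \abs{\phi - \widehat{h_1}} &\le \widehat{h_2}, \\
   \mu^G(\widehat{h_2}) &< \epsilon
\end{align*}
as an inequality of functions on the good locus $\Irr_{\unit}(G)_{\mathrm{good}}$, which is the assertion. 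The reduction from general products of local fields to a single factor is handled by the last sentence of \cref{example:main-example}, so no extra argument is needed there.

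The one point that requires genuine care — and which I expect to be the main obstacle — is checking that the pulled-back function $\widetilde\phi$ really is $\mu^G$-Riemann integrable on $\Theta_{\disc}(G)_{\temp}$, i.e. that passing through the two-sided identification of \cref{lemma:identification} does not enlarge the discontinuity locus beyond a null set. The subtlety is that the identification is only an isomorphism off a null set, so a priori the pullback is only defined up to a null set; one must pin down a genuine representative and verify that its set of discontinuities is contained in $(\text{disc}(\phi)) \cup \Irr_{\unit}(G)_{\mathrm{bad}}$, using the coincidence of topologies on the good locus together with the fact that the bad locus is closed with empty interior in the relevant subspace. Once that is established, the homeomorphism on the good locus transports Riemann-integrability cleanly, and everything else is bookkeeping. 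A secondary, purely cosmetic, point is that the conclusion is an inequality of functions valid on $\Irr_{\unit}(G)_{\mathrm{good}}$ rather than on all of $\Irr_{\unit}(G)$, which is harmless because the complement is $\mu^G$-null and is exactly what the statement claims.
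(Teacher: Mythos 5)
Your proposal is correct and follows essentially the same route as the paper: apply \cref{theorem:main-theorem} to the sextuple of \cref{example:main-example}, unwind the definition of the topology $\tau_{\mathcal{FT}(G)}$ to get $\widehat{h_1}$ and the dominating $\widehat{h_2}$, and transport along the identification of \cref{lemma:identification}. The one place you spend real effort — checking that the pullback of $\phi$ to $\Theta_{\disc}(G)_{\temp}$ stays in $\mathcal{RI}$ — is correct and is the step the paper leaves implicit; your argument (extend arbitrarily across the closed, $\mu^G$-null bad locus; on the open good locus topologies agree so the discontinuity set does not grow beyond $\mathrm{disc}(\phi)\cup\Irr_{\unit}(G)_{\mathrm{bad}}$; compactness of the support persists because the good locus is open with matching topology, hence the inclusion is continuous) is exactly the intended bookkeeping.
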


\begin{proof}
    We take a function $\phi \in \mathcal{RI}(\Irr_{\temp}(G), \mu^G)$.
    Then, by \cref{theorem:main-theorem} and \cref{example:main-example}, we can find functions $h_1, h_2 \in \Cc(G)$ such that we have 
    \begin{align*}
        \abs{\phi - \widehat{h_1}} &\leq \widehat{h_2}, \\
        \mu^G(\widehat{h_2}) &< \epsilon,
    \end{align*}
    on the subset ${\Irr_{\unit}(G)}_{\mathrm{good}}$ in $\Irr_{\unit}(G)$, through the identification explained in \cref{lemma:identification}.
    Hence the result.
\end{proof}

\section{The Sauvageot density principle}

In this section, we will prove the following result. 

\begin{proposition}[\cite{Sau97}*{Th\'eor\`eme 5.4}]\label{theorem:construction}
    Let $G$ be a reductive group schemes over a product of local fields.
    Let $C$ be a compact subset of $\Theta(G)$ and let $\epsilon > 0$ be a positive real number.
    Then, there exists a function $h \in \Cc(G)$ which satisfies the following conditions.
    \begin{enumerate}
      \item    
      We have $\widehat{h} \geq 0$ on $\Irr_{\unit}(G)$.
      \item    
      We have $\mu^G(\widehat{h}) \leq \epsilon$.
      \item     
      For any $\pi \in \Irr_{\unit}(G)_{\mathrm{bad}}$ with $\inf_G(\pi) \in C$, we have
      \begin{align*}
           \widehat{h}(\pi) \geq 1.
      \end{align*}  
    \end{enumerate}
\end{proposition}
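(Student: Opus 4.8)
The plan is to reduce \cref{theorem:construction} to the measure-theoretic approximation result \cref{corollary:density-parabolically-induced} by choosing an appropriate target function $\phi$. The idea is that the "bad" locus $\Irr_{\unit}(G)_{\mathrm{bad}}$ has $\mu^G$-measure zero, so on it we have complete freedom to prescribe values, but we must make a choice that is realizable as a limit of Fourier transforms in the topology $\tau_M$ of \cref{definition:topology}. Concretely, I would first use properness of $\inf_G$ (\cref{lemma:properness}) to note that $\inf_G^{-1}(C) \cap \Theta_{\disc}(G)_{\temp}$ is compact, and that its image in $\Irr_{\unit}(G)$ under the identification of \cref{lemma:identification} together with the finitely many bad points lying over $C$ is a compact set. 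The natural candidate is to take $\phi$ to be a function which is $\geq 2$ on a relatively compact open neighborhood $U$ of the relevant bad points inside $\Irr_{\temp}(G)$ and has sufficiently small $\mu^G$-mass; since the bad locus is $\mu^G$-null, we can shrink $U$ so that $\mu^G(U) < \delta$ for any prescribed $\delta$, and we can arrange $\partial U$ to be $\mu^G$-null so that $\phi = 2 \cdot \mathbf{1}_U$ (or a continuous bump dominated by it) lies in $\mathcal{RI}(\Irr_{\temp}(G), \mu^G)$.

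The key steps, in order: (1) Set up the compact set $K_C := \inf_G^{-1}(C)$ in $\Theta_{\disc}(G)_{\temp}$; identify, via \cref{lemma:identification}, the portion lying in $\Irr_{\unit}(G)_{\mathrm{good}}$ with a subset of $\Irr_{\temp}(G)$, and separately list the finitely many representations in $\Irr_{\unit}(G)_{\mathrm{bad}}$ with $\inf_G(\pi) \in C$ — these come from finitely many discrete data up to the relevant equivalence, so there are finitely many. (2) Choose a relatively compact open $U \subset \Irr_{\temp}(G)$ containing all the bad points in question, with $\mu^G(\partial U) = 0$ and $\mu^G(U)$ as small as we like; let $\phi$ be a compactly supported continuous function with $0 \le \phi \le 2$, $\phi \equiv 2$ near the bad points and $\supp \phi \subset U$, so $\phi \in \mathcal{RI}(\Irr_{\temp}(G), \mu^G)$. (3) Apply \cref{corollary:density-parabolically-induced} with this $\phi$ and a small parameter $\epsilon'$ to get $h_1, h_2 \in \Cc(G)$ with $|\phi - \widehat{h_1}| \le \widehat{h_2}$ and $\mu^G(\widehat{h_2}) < \epsilon'$ on $\Irr_{\unit}(G)_{\mathrm{good}}$. (4) Form $h := h_1 * h_1^* + h_2 * h_2^*$ (or a suitable positive combination) so that $\widehat{h} = |\widehat{h_1}|^2 + |\widehat{h_2}|^2 \geq 0$ everywhere on $\Irr_{\unit}(G)$ — this gives condition (1) automatically, using \cref{lemma:stable-under-complex-conjugate}. (5) On the bad points $\pi$ with $\inf_G(\pi) \in C$: here $\widehat{h_1}(\pi)$ and $\widehat{h_2}(\pi)$ are determined by the values of $\phi$ at nearby good points together with continuity of $\widehat{h_i}$ on $\Irr_{\unit}(G)$ — since $\widehat{h_1}$ is continuous and approximately equals $\phi \approx 2$ on good points accumulating at $\pi$, we get $\widehat{h_1}(\pi) \ge 1$, hence $\widehat{h}(\pi) \ge |\widehat{h_1}(\pi)|^2 \geq 1$, giving condition (3). (6) For condition (2), estimate $\mu^G(\widehat{h}) = \mu^G(|\widehat{h_1}|^2) + \mu^G(|\widehat{h_2}|^2)$; the second term is controlled by $\epsilon'$ and $\|\widehat{h_2}\|_\infty$, and the first is bounded by $\mu^G(\widehat{h_1} \cdot \overline{\widehat{h_1}})$ which on good points is close to $\mu^G(\phi^2) \le 4\mu^G(U)$, so choosing $U$ small and $\epsilon'$ small yields $\mu^G(\widehat{h}) \le \epsilon$.

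The main obstacle I anticipate is Step (5): making rigorous the claim that $\widehat{h_1}(\pi) \ge 1$ at a bad point $\pi$. The approximation in \cref{corollary:density-parabolically-induced} only controls $\widehat{h_1}$ on $\Irr_{\unit}(G)_{\mathrm{good}}$, whereas $\pi$ itself is bad; the bridge is that $\widehat{h_1}$, being a Fourier transform of a compactly supported function, is continuous on all of $\Irr_{\unit}(G)$ with its Fell topology, and $\pi$ lies in the closure of the good locus (indeed $\pi = i^G_P(\sigma)$ is a limit of $i^G_P(\sigma \otimes \chi_\lambda)$ as $\lambda \to 0$ through generic parameters, which are good by \cref{theorem-generic-irreducibility}). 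One must check that such a sequence of good representations converges to $\pi$ in the Fell topology and that $\phi$ was chosen to be $\equiv 2$ along it — this requires being a little careful about the interplay between the topology on $\Theta_{\disc}(G)_{\temp}$, the Fell topology on $\Irr_{\unit}(G)$, and the neighborhood $U$; I would handle it by choosing $U$ to be (the image of) an honest neighborhood of the relevant discrete data in $\Theta_{\disc}(G)_{\temp}$, pulled over to $\Irr_{\temp}(G)$, so that genericity of nearby parameters places them in $U \cap \Irr_{\unit}(G)_{\mathrm{good}}$. A secondary, more bookkeeping-level obstacle is ensuring one can simultaneously arrange $\mu^G(\partial U) = 0$ and $U$ a genuine neighborhood of the bad points; this follows since $\mu^G$ is a Radon measure of the form $\mu^G(\pi)\rd\pi$ with smooth density, so generic small balls have null boundary.
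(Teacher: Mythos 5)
Your proposal takes a genuinely different route from the paper, but it has a fatal gap at Step~(5), and the route itself cannot work.

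The paper's proof follows Sauvageot's original argument: it uses Sauvageot's Lemme~5.1 to build, near each $\theta\in C$, a function $h_{\theta,\epsilon_1}\in\Cc(G)$ whose Fourier transform is pinned between $1$ and a constant $c_G$ at \emph{individual} bad irreducibles and small at good ones; it then patches these via the Besicovitch-type covering \cref{lemma:Besikovitch}, the cutoff functions of Sauvageot's Lemme~5.2, and the auxiliary positive function from Lemme~5.3, and concludes with the modification step from \cite{Sau97}*{p.178}. The hard local ingredient (Lemme~5.1, built from pseudo-coefficients) is precisely what produces a lower bound on $\widehat{h}(\pi)$ for a single bad $\pi$; your plan tries to replace this by an approximation argument that only sees a weaker piece of information.

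The obstacle you flagged in Step~(5) is not merely technical, it is exactly where the argument breaks. The Fourier transform $\widehat{h_1}$ is continuous as a function on $\Theta_{\disc}(G)_{\temp}$, where the value at a non-generic discrete datum $\{M,\sigma\}_G$ is $\tr i^G_P(\sigma)(h_1)=\sum_i \widehat{h_1}(\pi_i)$, the sum over all irreducible constituents $\pi_i$ in the bad fiber. That is the limit you recover from good points accumulating at the fiber. There is no continuity statement on $\Irr_{\unit}(G)$ that bounds a single $\widehat{h_1}(\pi_i)$ from below; indeed Fell-topology limits of the generic family land on \emph{every} constituent $\pi_i$ simultaneously, while the trace character at a single $\pi_i$ is only one summand. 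Concretely, one could have $\widehat{h_1}(\pi_1)\approx 2$ and $\widehat{h_1}(\pi_2)\approx 0$ while the fiber sum is $\approx 2$; then $\widehat{h}=|\widehat{h_1}|^2$ satisfies your positivity and the fiber-sum estimate yet $\widehat{h}(\pi_2)\approx 0$, violating condition~(3). This is the same phenomenon that \cref{remark:counterexample} identifies as the gap in Sauvageot's original argument: controlling quantities that only depend on the fiber sum cannot separate the irreducibles inside a bad fiber. You cannot get condition~(3) for individual bad representations without an input, like Lemme~5.1, that acts at the level of individual irreducibles and is invisible to \cref{corollary:density-parabolically-induced}, which by construction only controls the good locus.

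A secondary error: the set of bad representations lying over $C$ is not finite. The non-regular locus in $\Theta_{\disc}(G)_{\temp}$ (where some coroot vanishes on the inducing parameter) is a closed $\mu^G$-null set, but it is typically of positive dimension, and there are in addition infinitely many non-tempered unitary representations with infinitesimal character in $C$ (complementary series). So the setup of Step~(1)--(2), where you put a small open neighborhood around finitely many bad points, does not go through; this is why the paper resorts to a covering of $C$ in $\Theta(G)$ together with a counting argument via \cref{lemma:Besikovitch}, rather than an argument localized at finitely many representations.
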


Before proving this theorem, we first show that this theorem and \cref{theorem:main-theorem} imply the following Sauvageot density principle.

\begin{theorem}[Sauvageot's density principle, \cite{Sau97}*{Th\'eor\`eme 7.3}]\label{theorem:density-principle}
     Let $\phi$ be a bounded, Borel measurable, and compactly supported function on $\Irr_{\temp}(G) \subset \Irr_{\unit}(G)$ such that the set of discontinuity of $\phi$ is a $\mu^G$-null set.
     Let $\epsilon > 0$ be a positive real number.
     Then, there exist functions $h_1, h_2 \in \Cc(G)$ such that we have 
     \begin{align*}
      \abs{\phi - \widehat{h_1}} &\leq \widehat{h_2} \\
      \mu^G(\widehat{h_2}) & < \epsilon
     \end{align*} 
     on $\Irr_{\unit}(G)$.
\end{theorem}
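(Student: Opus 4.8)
The plan is to combine the measure-theoretic approximation of \cref{theorem:main-theorem} (in the form of \cref{corollary:density-parabolically-induced}) with the ``bad set'' correction provided by \cref{theorem:construction}. The point is that \cref{corollary:density-parabolically-induced} gives the desired approximation only on $\Irr_{\unit}(G)_{\mathrm{good}}$, so we must repair the estimate on $\Irr_{\unit}(G)_{\mathrm{bad}}$, which is a $\mu^G$-null set but still carries the Fell topology and on which $\widehat{h_1}$ and $\phi$ may disagree by a large amount.

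First I would observe that since $\phi$ is compactly supported on $\Irr_{\temp}(G)$ and $\inf_G$ is proper by \cref{lemma:properness}, the image $C := \inf_G(\supp \phi) \subset \Theta(G)$ is compact; enlarging $C$ if necessary we may assume it is a compact subset of $\Theta(G)$ on which we will apply \cref{theorem:construction}. Apply \cref{corollary:density-parabolically-induced} with the given $\phi$ and, say, $\epsilon/3$ in place of $\epsilon$, to obtain $h_1, h_0 \in \Cc(G)$ with $\abs{\phi - \widehat{h_1}} \leq \widehat{h_0}$ and $\mu^G(\widehat{h_0}) < \epsilon/3$ on $\Irr_{\unit}(G)_{\mathrm{good}}$. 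The function $\phi$ is bounded, say $\abs{\phi} \leq B$; also $\widehat{h_1}$ is bounded on $\Irr_{\unit}(G)$ by $\lVert h_1 \rVert_{L^1(G)}$, so $\abs{\phi - \widehat{h_1}} \leq B'$ everywhere on $\Irr_{\unit}(G)$ for some constant $B'$. Now apply \cref{theorem:construction} with the compact set $C$ and with $\epsilon/(3B')$ in place of $\epsilon$, obtaining $h_{\mathrm{bad}} \in \Cc(G)$ with $\widehat{h_{\mathrm{bad}}} \geq 0$ on $\Irr_{\unit}(G)$, with $\mu^G(\widehat{h_{\mathrm{bad}}}) \leq \epsilon/(3B')$, and with $\widehat{h_{\mathrm{bad}}}(\pi) \geq 1$ for all $\pi \in \Irr_{\unit}(G)_{\mathrm{bad}}$ with $\inf_G(\pi) \in C$.

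Set $h_2 := h_0 + B' h_{\mathrm{bad}}$ (rescaling $h_{\mathrm{bad}}$ by the real constant $B'$, using that $\Cc(G)$ is a vector space and the Fourier transform is linear). Then $\mu^G(\widehat{h_2}) \leq \mu^G(\widehat{h_0}) + B' \mu^G(\widehat{h_{\mathrm{bad}}}) < \epsilon/3 + \epsilon/3 < \epsilon$. I claim $\abs{\phi - \widehat{h_1}} \leq \widehat{h_2}$ on all of $\Irr_{\unit}(G)$. On $\Irr_{\unit}(G)_{\mathrm{good}}$ this holds because $\widehat{h_2} \geq \widehat{h_0} \geq \abs{\phi - \widehat{h_1}}$, using $\widehat{h_{\mathrm{bad}}} \geq 0$. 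For $\pi \in \Irr_{\unit}(G)_{\mathrm{bad}}$: if $\inf_G(\pi) \in C$ then $\widehat{h_2}(\pi) \geq B' \widehat{h_{\mathrm{bad}}}(\pi) \geq B' \geq \abs{\phi(\pi) - \widehat{h_1}(\pi)}$; if $\inf_G(\pi) \notin C$ then $\pi \notin \supp \phi$ (since $\inf_G(\supp\phi) \subseteq C$), so $\phi(\pi) = 0$, and moreover $\pi \notin \supp\phi$ means $\pi$ is outside the image under $\inf_G^{-1}$ of a neighborhood — here I must be slightly careful, since $\widehat{h_1}(\pi)$ need not vanish for such $\pi$.

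The main obstacle is precisely this last point: controlling $\widehat{h_1} = \phi$-approximant on the part of $\Irr_{\unit}(G)_{\mathrm{bad}}$ lying over $\Theta(G) \setminus C$. The clean fix is to arrange from the outset that the approximation from \cref{corollary:density-parabolically-induced} is supported near $\supp \phi$: since $\phi$ has compact support, one runs the argument of \cref{theorem:main-theorem} with $M$ replaced by functions whose Fourier transforms are supported in a fixed compact neighborhood (using that $\mathcal{FT}(G)$ contains, via the Paley--Wiener theorem, elements localized over any compact subset of $\Theta(G)$), so that $\widehat{h_1}$ and $\widehat{h_0}$ both vanish outside $\inf_G^{-1}(C')$ for a compact $C' \supseteq C$. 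Then for $\pi \in \Irr_{\unit}(G)_{\mathrm{bad}}$ with $\inf_G(\pi) \notin C$ but $\inf_G(\pi) \in C'$ we still have $\widehat{h_{\mathrm{bad}}}(\pi) \geq 1$ by applying \cref{theorem:construction} to $C'$ instead of $C$; and for $\inf_G(\pi) \notin C'$ all three of $\phi(\pi)$, $\widehat{h_1}(\pi)$, $\widehat{h_0}(\pi)$ vanish, so the inequality is trivial. With this refinement the estimate $\abs{\phi - \widehat{h_1}} \leq \widehat{h_2}$ holds on all of $\Irr_{\unit}(G)$, and since $\mu^G(\widehat{h_2}) < \epsilon$ we are done.
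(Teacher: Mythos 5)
Your plan — use \cref{corollary:density-parabolically-induced} on $\Irr_{\unit}(G)_{\mathrm{good}}$, repair the estimate on $\Irr_{\unit}(G)_{\mathrm{bad}}$ using \cref{theorem:construction}, and combine — is exactly the structure of the paper's proof, and you correctly identified the real difficulty: for $\pi \in \Irr_{\unit}(G)_{\mathrm{bad}}$ with $\inf_G(\pi)$ outside a compact set, $\phi(\pi)=0$ but $\widehat{h_1}(\pi)$ and $\widehat{h_0}(\pi)$ need not vanish, and $\widehat{h_{\mathrm{bad}}}(\pi)$ need not be $\geq 1$. So far so good.

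But the ``clean fix'' you propose is not available. You claim one can run \cref{theorem:main-theorem} inside a submodule of $\mathcal{FT}(G)$ whose elements vanish outside $\inf_G^{-1}(C')$ for some compact $C' \subset \Theta(G)$, ``using that $\mathcal{FT}(G)$ contains, via the Paley--Wiener theorem, elements localized over any compact subset of $\Theta(G)$.'' This is false when $F=\R$: by the Clozel--Delorme Paley--Wiener theorem, $\widehat{h}$ for $K$-finite $h \in \Cc(G)$ is a holomorphic Paley--Wiener function in the infinitesimal-character variable — entire of exponential type. In particular it does not vanish on any open subset of $\inf_G(\Irr_{\unit}(G))$: e.g., for $G = \SL_2(\R)$ the spherical unitary principal series fills the whole imaginary axis, and $\widehat{h}$ restricted to it cannot have compact support unless $h=0$. (In the $p$-adic case one \emph{does} get that any $\widehat{h}$ vanishes on $\Irr_{\unit}(G)$ outside $\inf_G^{-1}(C')$ for \emph{some} large $C'$, because $\widehat{h}$ lives on finitely many Bernstein components and the unitary locus within each has bounded parameter; but that is an a posteriori observation about every Fourier transform, not a submodule you can choose, and it gives no help in the archimedean case.) So the proposed submodule is $\{0\}$ in the archimedean case and \cref{theorem:main-theorem} does not apply.

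The paper handles this differently and uniformly in $F$. One takes $c \in C_c(\Theta(G))$ with $c' = \inf_G^*c \equiv 1$ on $\supp\phi$ and works with $c'\widehat{h'_1}$ and $c'\widehat{h'_2}$. These \emph{do} vanish outside $\inf_G^{-1}(\supp c)$, which is what your argument needs — but they are no longer in $\mathcal{FT}(G)$. The paper invokes \cref{lemma:closure-module} to see they lie in $\overline{\mathcal{FT}(G)}^{\tau}$, then \cref{theorem:construction} applied to $C=\supp c$ produces $\widehat{h'_3}$ dominating the error on $\Irr_{\unit}(G)_{\mathrm{bad}}$, and finally \cref{corollary:double-closure} (together with \cref{remark:non-Hausdorff}, since $\Irr_{\unit}(G)$ is not Hausdorff) lets one pass from the closure back to actual Fourier transforms $\widehat{h_1}, \widehat{h_2}$ of functions in $\Cc(G)$. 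That closure-and-return step is the piece missing from your proposal, and it is precisely what makes the multiplication by $c'$ legitimate without any compact-support assumption on $\widehat{h'_1}$ itself.
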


\begin{proof}[Proof that \cref{theorem:construction} implies \cref{theorem:density-principle}]
    We take a function $c \in C_c(\Theta(G))$ such that $c' = \inf_G^{*} c$ is equal to $1$ on the support of $\phi$. 
    We take a positive number $\epsilon_1$ such that 
    \begin{align*}
          \epsilon_1 < \frac{1}{2(||c||_{\infty} + 1)} \epsilon.
    \end{align*}
    We take the functions $h'_1, h'_2 \in \Cc(G)$ satisfying the condition of \cref{corollary:density-parabolically-induced} for $\phi$ and $\epsilon_1$.
    Then, we have 
    \begin{align*}
      \abs{\phi - \widehat{h'_1}} &\leq \widehat{h'_2} \\
      \mu^G(\widehat{h'_2}) & < \epsilon_1
    \end{align*} 
    on the complement of $\Irr_{\unit}(G)_{\mathrm{bad}}$.
    Then, we have 
    \begin{align*}
      \abs{\phi - c'\widehat{h'_1}} \leq c'\widehat{h'_2}.
    \end{align*}
    By \cref{theorem:construction}, we can take an element $h'_3 \in \Cc(G)$ such that we have 
    \begin{align*}
      \abs{\phi - c'\widehat{h'_1}} + c'\widehat{h'_2} &\leq \widehat{h'_3} \\
      \mu^G(\widehat{h'_3}) &< \epsilon_1
    \end{align*}
    on the set $\Irr_{\unit}(G)_{\mathrm{bad}}$.
    Then, we have 
    \begin{align*}
      \abs{\phi - c'\widehat{h'_1}} &\leq c'\widehat{h'_2} + \widehat{h'_3} \\
      \mu^G(c'\widehat{h'_2} + \widehat{h'_3}) &< (||c||_{\infty} + 1) \epsilon_1 < \frac{1}{2} \epsilon.
    \end{align*}
    By applying \cref{corollary:double-closure}, \cref{lemma:closure-module} and \cref{remark:non-Hausdorff}, we can find functions $h_1, h_2 \in \Cc(G)$ such that we have 
    \begin{align*}
      \abs{\phi - \widehat{h_1}} 
      &\leq \widehat{h_2} \\ 
      \mu^G(\widehat{h_2}) 
      &< \epsilon.
    \end{align*}
    Hence the result.
\end{proof}

\begin{lemma}\cite{Sau97}*{Corollaire 6.2}
    \cref{theorem:construction} for reductive groups over local fields implies the general case of \cref{theorem:construction}.
\end{lemma}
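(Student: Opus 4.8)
The plan is to follow the blueprint of \cite{Sau97}*{Corollaire 6.2} and reduce from a group $G=\prod_{i=1}^{n}G_i$ over a product $F=\prod_{i=1}^{n}F_i$ of local fields to its factors $G_i$, using that every structure in sight factors: $\Theta(G)=\prod_i\Theta(G_i)$; every $\pi\in\Irr_{\unit}(G)$ is an external tensor product $\pi=\boxtimes_i\pi_i$ with $\pi_i\in\Irr_{\unit}(G_i)$ (reductive groups over local fields are of type I); $\mu^G=\boxtimes_i\mu^{G_i}$ by the Plancherel formula for a product; $\inf_G(\boxtimes_i\pi_i)=(\inf_{G_i}(\pi_i))_i$; and $\bigotimes_i\Cc(G_i)\subset\Cc(G)$ with $\widehat{\boxtimes_i h_i}(\boxtimes_i\pi_i)=\prod_i\widehat{h_i}(\pi_i)$. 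A preliminary observation is that $\boxtimes_i\pi_i\in\Irr_{\unit}(G)_{\mathrm{good}}$ if and only if every $\pi_i\in\Irr_{\unit}(G_i)_{\mathrm{good}}$: a parabolic of $\prod_iG_i$ is a product of parabolics, a discrete datum of $\prod_iG_i$ is $\{\prod_iM_i,\boxtimes_i\sigma_i\}$, the relative Weyl group $W(G,M)=\prod_iW(G_i,M_i)$ acts factorwise so that the regularity of $\boxtimes_i\sigma_i$ in the sense of \cite{Sau97}*{p.172, line 30} reduces factorwise to that of each $\sigma_i$, and $i_P^G(\boxtimes_i\sigma_i)=\boxtimes_i i_{P_i}^{G_i}(\sigma_i)$. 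Consequently $\Irr_{\unit}(G)_{\mathrm{bad}}=\bigcup_{i=1}^{n}B_i$, where $B_i=\{\boxtimes_j\pi_j : \pi_i\in\Irr_{\unit}(G_i)_{\mathrm{bad}}\}$.

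Now fix a compact set $C\subset\Theta(G)$ and $\epsilon>0$, and set $C_i=\pr_i(C)\subset\Theta(G_i)$; each $C_i$ is compact and $C\subset\prod_iC_i$. Since the condition $\widehat h\ge 0$ is preserved under addition and $\widehat h\mapsto\mu^G(\widehat h)$ is linear, it suffices to construct, for each $i$, a function $h^{(i)}\in\Cc(G)$ with $\widehat{h^{(i)}}\ge 0$ on $\Irr_{\unit}(G)$, with $\mu^G(\widehat{h^{(i)}})\le\epsilon/n$, and with $\widehat{h^{(i)}}(\pi)\ge 1$ for all $\pi\in B_i$ with $\inf_G(\pi)\in C$; then $h=\sum_{i=1}^{n}h^{(i)}$ satisfies the conclusion of \cref{theorem:construction}, because for a given bad $\pi$ one of the summands is $\ge 1$ and the others are $\ge 0$.

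To build $h^{(1)}$ (the other indices being symmetric) let $h_1\in\Cc(G_1)$ be furnished by \cref{theorem:construction} for $G_1$ applied to $C_1$ and to a small parameter $\epsilon_1>0$ fixed at the end, so that $\widehat{h_1}\ge 0$ on $\Irr_{\unit}(G_1)$, $\mu^{G_1}(\widehat{h_1})\le\epsilon_1$, and $\widehat{h_1}(\pi_1)\ge 1$ for $\pi_1\in\Irr_{\unit}(G_1)_{\mathrm{bad}}$ with $\inf_{G_1}(\pi_1)\in C_1$. For $j\ge 2$ let $k_j\in\Cc(G_j)$ be a function with $\widehat{k_j}\ge 0$ on $\Irr_{\unit}(G_j)$ and $\widehat{k_j}\ge 1$ on $L_j:=\{\pi_j\in\Irr_{\unit}(G_j) : \inf_{G_j}(\pi_j)\in C_j\}$, whose existence is addressed below. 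Put $h^{(1)}=h_1\boxtimes k_2\boxtimes\cdots\boxtimes k_n$. Then $\widehat{h^{(1)}}(\boxtimes_j\pi_j)=\widehat{h_1}(\pi_1)\prod_{j\ge 2}\widehat{k_j}(\pi_j)\ge 0$ always; if $\pi=\boxtimes_j\pi_j\in B_1$ with $\inf_G(\pi)\in C\subset\prod_jC_j$, then $\widehat{h_1}(\pi_1)\ge 1$ (since $\pi_1$ is bad and $\inf_{G_1}(\pi_1)\in C_1$) and $\widehat{k_j}(\pi_j)\ge 1$ for $j\ge 2$ (since $\inf_{G_j}(\pi_j)\in C_j$), so $\widehat{h^{(1)}}(\pi)\ge 1$; and $\mu^G(\widehat{h^{(1)}})=\mu^{G_1}(\widehat{h_1})\prod_{j\ge 2}\mu^{G_j}(\widehat{k_j})\le\epsilon_1\prod_{j\ge 2}k_j(1)$, using $\mu^{G_j}(\widehat{k_j})=k_j(1)$ by the Plancherel formula; this is $<\epsilon/n$ once $\epsilon_1$ is chosen small relative to the now-fixed constants $k_j(1)$.

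It remains to produce the $k_j$, which is the only genuinely non-formal step. First, $L_j$ is relatively compact in $\Irr_{\unit}(G_j)$: the compact set $C_j$ meets only finitely many connected components of $\Theta(G_j)$, over each such component the cuspidal (resp.\ discrete) supports with infinitesimal character in $C_j$ form a compact set since $\inf_{G_j}$ has finite fibres and unitarity bounds the twisting parameter, and the uniformly boundedly many irreducible subquotients of the corresponding induced modules then range over a relatively compact subset of $\Irr_{\unit}(G_j)$ --- this is the same type of compactness statement already used in \cite{Sau97} and in the proof of \cref{theorem:construction}. Let $\overline{L_j}$ be its compact closure. For each $\rho\in\overline{L_j}$ choose $f_\rho\in\Cc(G_j)$ with $\rho(f_\rho)\neq 0$ and set $g_\rho=f_\rho\ast f_\rho^{*}$, so that $\tr\pi(g_\rho)=\|\pi(f_\rho)\|_{\mathrm{HS}}^2\ge 0$ for every unitary $\pi$, with $\tr\rho(g_\rho)>0$. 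The map $\pi\mapsto\tr\pi(g_\rho)$ is lower semicontinuous on $\Irr_{\unit}(G_j)$ for the Fell topology, being a sum over an orthonormal basis of the continuous nonnegative functions $\pi\mapsto\langle\pi(g_\rho)e_\alpha,e_\alpha\rangle$; hence the sets $\{\pi : \tr\pi(g_\rho)>0\}$ are open and cover the compact set $\overline{L_j}$. Extract a finite subcover indexed by $\rho_1,\dots,\rho_m$ and put $g=\sum_{l=1}^{m}g_{\rho_l}$; then $\pi\mapsto\tr\pi(g)$ is nonnegative, lower semicontinuous, and strictly positive on $\overline{L_j}$, hence attains there a positive minimum $\delta_j$, and $k_j:=\delta_j^{-1}g$ has the required properties. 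The main obstacle is thus precisely this last step --- the relative compactness of $L_j$ and the lower semicontinuity of $\pi\mapsto\tr\pi(f\ast f^{*})$ --- while everything downstream is formal bookkeeping.
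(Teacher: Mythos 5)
The paper itself gives no proof of this lemma; it is stated with only a citation to Sauvageot's Corollaire~6.2. So there is no internal proof to compare against, and your argument has to be judged on its own merits as a reconstruction.

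The overall strategy is correct. Writing $G=\prod_i G_i$ and using that $\Theta(G)=\prod_i\Theta(G_i)$, $\Irr_{\unit}(G)=\prod_i\Irr_{\unit}(G_i)$ (via external tensor product), $\mu^G=\boxtimes_i\mu^{G_i}$, $\inf_G=(\inf_{G_i})_i$, and the observation $\Irr_{\unit}(G)_{\mathrm{bad}}=\bigcup_i B_i$ are all sound; so is the reduction to constructing each $h^{(i)}$ separately with budget $\epsilon/n$, and the factorization $\widehat{h^{(1)}}=\widehat{h_1}\otimes\widehat{k_2}\otimes\cdots$ with $\mu^G(\widehat{h^{(1)}})=\mu^{G_1}(\widehat{h_1})\prod_{j\ge2}k_j(1)$.

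The one substantive step is producing $k_j\in\Cc(G_j)$ with $\widehat{k_j}\ge 0$ everywhere and $\widehat{k_j}\ge 1$ on $L_j=\{\pi_j:\inf_{G_j}(\pi_j)\in C_j\}$. Your route via quasi-compactness of $L_j$ and lower semicontinuity of $\pi\mapsto\tr\pi(f\ast f^{*})$ is legitimate in outline (the lower semicontinuity of $\pi\mapsto\tr\pi(a)$ for positive $a$ on the spectrum with the Fell topology is Dixmier, \emph{Les $C^{*}$-alg\`ebres et leurs repr\'esentations}, 3.5.9), but two points need repair. First, the Fell topology is not Hausdorff; a relatively quasi-compact set need not have quasi-compact closure, so you should take a positive minimum over a quasi-compact superset of $L_j$ rather than over $\overline{L_j}$. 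Second, your sketch of the lower semicontinuity (``sum over an orthonormal basis of continuous functions'') does not parse as written, since the basis lives in the varying representation space; one should simply quote Dixmier. Third, and more importantly, the quasi-compactness of $L_j$ is itself a nontrivial input that you invoke by analogy but neither prove nor reference precisely. More economically: the paper's proof of \cref{theorem:construction} already invokes \cite{Sau97}*{Lemme 5.3}, which produces, for a single local field and a compact subset of $\Theta$, exactly such a positive function with Fourier transform $\ge 1$ on the corresponding fiber. Applying that lemma to $G_j$ and $C_j$ yields $k_j$ directly and avoids re-deriving the compactness and semicontinuity facts. With that substitution the argument is clean and complete.
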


Thus, we return to the setting where $G$ is a reductive group over a local field $F$.

\begin{lemma}[c.f. \cite{Sau97}*{Lemme 2.6}]\label{lemma:Besikovitch}
   Let $V$ be a finite dimensional  $\R$-vector space and let $\Gamma$ be a discrete subgroup of $V$. 
   We set $X = V/\Gamma$.
   For any compact set $C$ of $X$ and a family of open neighborhood $U_{x}$ for each point $x \in C$, there exists a finite covering $\{ W_i \}_{i \in I}$ of $C$ by relatively compact open subsets $W_i$ which satisfy the following property.
   \begin{enumerate}
     \item     
      For any $i \in I$, there exists a point $x \in C$ such that $\overline{W_i} \subset V_x$.
      \item     
      Each point of $\overline{W_i}$ is contained in at most $2^{\dim X}$ elements of $\{ \overline{W_j} \}_{j \in I}$.
   \end{enumerate} 
\end{lemma}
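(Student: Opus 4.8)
The plan is to reduce to the classical Besicovitch covering lemma on $\R^n$ and then descend to the quotient $X = V/\Gamma$. First I would unwind the statement: for each $x \in C$ we are given an open neighborhood $U_x$ (the statement writes $V_x$; I will assume $U_x = V_x$). Shrinking $U_x$ if necessary, we may assume each $U_x$ is a relatively compact open ball (in a fixed Euclidean norm on $V$) centered at $x$, small enough that the quotient map $\pi \colon V \to X$ restricts to a homeomorphism on a neighborhood of $\overline{U_x}$; this is possible since $\Gamma$ is discrete, so $\pi$ is a local homeomorphism and $C$ is compact, giving a uniform lower bound on the injectivity radius along $C$. Lift $C$ to a compact set $\widetilde{C} \subset V$ with $\pi(\widetilde{C}) = C$ (e.g. intersect $\pi^{-1}(C)$ with a large closed ball, or take a compact fundamental-domain piece), and lift each $U_x$ to the ball $\widetilde{U}_{\tilde x}$ of the same radius around a chosen preimage $\tilde x$.

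Next I would apply the Besicovitch covering theorem in $\R^n = V$, $n = \dim X = \dim V$: from the family of balls $\{\widetilde{U}_{\tilde x}\}_{\tilde x \in \widetilde{C}}$ covering $\widetilde{C}$ one extracts a countable subfamily still covering $\widetilde{C}$ whose multiplicity is bounded by a constant $N_n$ depending only on $n$. Since $\widetilde{C}$ is compact we may pass to a finite subcover $\{\widetilde{W}_i\}_{i \in I}$, with each $\widetilde{W}_i$ one of the balls $\widetilde{U}_{\tilde x}$, hence relatively compact with $\overline{\widetilde{W}_i} \subset \widetilde{U}_{\tilde x} $ after a further infinitesimal shrink of the radius (keeping the cover property, again by compactness). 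Set $W_i = \pi(\widetilde{W}_i)$. These are relatively compact open subsets of $X$ covering $C$, and property (1) holds by construction: $\overline{W_i} = \pi(\overline{\widetilde{W}_i}) \subset \pi(\widetilde{U}_{\tilde x}) = U_x$, using that $\pi$ is injective on a neighborhood of $\overline{\widetilde{U}_{\tilde x}}$.

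The one genuine subtlety — and the step I expect to be the main obstacle — is property (2): translating the Besicovitch multiplicity bound on $V$ into a multiplicity bound on $X$. A point $\bar y \in \overline{W_i}$ lifts to possibly many points of $V$ (one per coset), and $\bar y \in \overline{W_j}$ means $\overline{\widetilde{W}_j}$ meets some $\Gamma$-translate of $\overline{\widetilde{W}_i}$. Here I would exploit that all the balls have radius bounded by the injectivity-radius estimate along $C$: if the radii are all less than half the minimal distance from $\widetilde{C}$ to its nontrivial $\Gamma$-translates, then any two of the $\overline{\widetilde{W}_j}$ that project to overlapping sets in $X$ must themselves overlap in $V$ (their lifts cannot be separated by a lattice vector). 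Consequently the number of $\overline{W_j}$ containing $\bar y$ equals the number of $\overline{\widetilde{W}_j}$ containing the relevant lift, which is $\le N_n = 2^{\dim X}$ by Besicovitch (using the sharp constant $2^n$ available for balls, as in the form of the lemma cited as \cite{Sau97}*{Lemme 2.6}). Carrying out this separation estimate carefully — making the choice of radii uniform over $C$ before extracting the Besicovitch subcover — is the crux; everything else is routine point-set topology. Hence the result.
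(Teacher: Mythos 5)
Your proposal and the paper diverge on the central step, and the divergence is where the gap lies. The paper does not invoke the Besicovitch covering theorem for balls at all: it passes to coordinates $V=\R^n$, $\Gamma=\Z^m$, and builds the cover directly from a dyadic grid. Concretely, it considers the boxes determined by $\tfrac{1}{2^k}\Z^n/\Z^m$ for $k$ large, slightly thickens each box to width $\tfrac{11}{10}\cdot\tfrac{1}{2^k}$, keeps only those thickened boxes meeting a compact neighborhood $K'$ of $C$, and then subdivides dyadically until each closure fits inside some $U_x$ (a Lebesgue-number argument). The multiplicity bound $2^n$ then comes for free: because of the small thickening factor, in each of the $n$ coordinate directions a given point can lie in at most two of the thickened intervals, hence in at most $2^n$ of the thickened boxes; and this count is purely local, so it descends to $X$ via the local isomorphism $V \twoheadrightarrow V/\Gamma$.

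Your argument instead reaches for the Besicovitch covering theorem with Euclidean balls and asserts that "the sharp constant $2^n$ [is] available for balls." This is the genuine gap: the Besicovitch multiplicity constant for Euclidean balls in $\R^n$ is \emph{not} $2^n$; already for $n=2$ it exceeds $4$ (the known values/estimates are an order of magnitude larger and grow much faster than $2^n$). The bound $2^{\dim X}$ stated in the lemma is specifically a feature of axis-aligned grid cubes, where a point can be a boundary/corner point of at most $2^n$ cells, and it is destroyed the moment you replace cubes by balls. So as written your proof establishes the covering and property (1), and your injectivity-radius/descent discussion for handling $\Gamma$-translates is sound in spirit (it plays the same role as the paper's "counting locally" remark), but it does not prove property (2) with the constant the lemma asserts. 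To fix it you would need to abandon balls and run Besicovitch with axis-aligned cubes from a common dyadic grid — at which point you would essentially be reproducing the paper's construction, and the appeal to the Besicovitch theorem becomes unnecessary.
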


\begin{proof}
    We may assume that $V = \R^n$ and $\Gamma = \Z^m$ for $m \leq n$.
    Let $\{p_i\}_{i=1,...,n}$ be the standard coordinate functions on $\R^n$.
    We set $\cU = \{ U_x \}_{x \in K}$. 
    Let $K_{\epsilon}$ denote the closed $\epsilon$-neighborhood of $K$.
    Then, as $K$ is compact, there exists an $\epsilon > 0$ such that 
    \begin{align*}
        K \subset K' = K_{\epsilon} \subset \bigcup \cU.
    \end{align*}
    We consider the subgroup $\frac{1}{2^{k}}\Z^n / \Z^m$ for sufficiently large $k$.
    This defines a family of boxes in $X$ with width $\frac{1}{2^k}$, where  a box in $X$ with width $r$ is the projection of the sets in $\R^n$ of the form 
    \begin{align*}
        \{ y \in X \st \abs{p_i(y)-p_i(c)} < r/2, i=1,2,...,n \}
    \end{align*}
    for some $c \in X$ and $r > 0$. 
    For each element $B$ in the family above, we take a box $B'$ with width $\frac{11}{10} \frac{1}{2^k}$, roughly speaking, the thickened box $B'$ obtained from $B$.
    We will denote the family of the thickened boxes by $\cB_k$.

    We set $\cB_k(K) = \{ B \in \cB_k \st \overline{B} \cap K \neq \emptyset \}$.
    This set is obviously finite, as the set $K$ is compact.
    By enlarging $k$, we may assume that 
    \begin{align*}
        K \subset \bigcup_{B \in \cB_k(K)} B \subset \bigcup_{B \in \cB_k(K)} \overline{B} \subset K' \subset \bigcup \cU.
    \end{align*}
    As $K'$ is compact, the set $\bigcup_{B \subset \cB_k(K)} \overline{B}$ is compact in $X$.
    Thus, applying the argument using the Lebesgue number for this set and the covering induced by $\cU$, we divide  each box in $\cB_k(K)$ into $2^n$ boxes several times to obtain a new set of boxes $\cB$ in $X$, such that for any element $B \in \cB$, there exists an element $U \in \cU$ such that $\overline{B} \subset U$.
    We take the set $\cB$ as the set $\{W_i\}_{i}$ in the statement.
    Then, the first property is satisfied by definition.
    By noting that the projection $V \twoheadrightarrow V/\Gamma$ is a   local isomorphism and counting locally, it is easy to see that the second property holds true for this set.
\end{proof}

We recall the notion of vertical bands with compact real part, which are used in the proof.

\begin{definition}
    First, assume that $F=\R$.
    In this case, by a vertical band with compact real part in $\fh^* \otimes_{\R} \C$, we mean a subset $B$ of $\fh^* \otimes_{\R} \C$ with compact real part with respect to $\fh^{*}_{\R}$.
    We call the projection of a vertical band with compact real part in $\fh^* \otimes_{\R} \C$ onto $\Theta(G)$, a vertical band with compact real part in $\Theta(G)$.

    We assume that $F$ is a $p$-adic field.
    By a vertical band with compact real part in $\fa^*_{M, \C}$, we mean a subset of $\fa^*_{M, \C}$ with compact real part with respect to $X^*(M) \otimes_{\Z} \R$. 
    We call the finite union of images of vertical bands with compact real part in $\fa^*_{M, \C}$ under the map $(2.2)$, a vertical band with compact real part in $\Theta(G)$.
\end{definition}

\begin{proof}[The proof of \cref{theorem:construction}]
   The proof we give is essentially the same as that in \cite{Sau97}*{Th\'eor\`eme 5.4}, but we fix minor notational errors pointed out in \cite{Shi12}*{Appendix A}. 
  
   We take arbitrary positive numbers $\epsilon_1, \epsilon_2, \epsilon_3$.
   Also, we fix a compact neighborhood $C_1$ of $C$.

   By \cite{Sau97}*{Lemme 5.1}, for any point $\theta \in C$, we can find an open neighborhood $V_{\theta, \epsilon_1} \subset C_1$ of $\theta$ and a function $h_{\theta, \epsilon_1} \in \Cc(G)$, such that the following condition holds:
    For any irreducible representation $\pi$ of $G$ with $\inf_G(\pi) \in V_{\theta, \epsilon_1}$, we have 
    \begin{align*}
      0 \leq &\widehat{h_{\theta, \epsilon_1}}(\pi) \leq \epsilon_1, \quad \pi \in \Irr_{\unit}(G)_{\mathrm{good}}, \\
      1 \leq &\widehat{h_{\theta, \epsilon_1}}(\pi) \leq c_G, \quad \pi \in \Irr_{\unit}(G)_{\mathrm{bad}},
    \end{align*}
    where $c_G$ is a positive constant depending only on $G$.

    Then, by \cref{lemma:Besikovitch},  there exists a subset $\{ \theta_i \}_{i = 1, 2, ..., N(\epsilon_1)}$ of $C$ and an open covering $\{W_i\}_{i = 1, ..., N(\epsilon_1)}$ of $C$ such that  
    \begin{align*}
        \sum_{i = 1, ..., N(\epsilon_1)} \mathbf{1}_{\overline{W_i}} 
        &\leq   
        c'_G \mathbf{1}_{C_1} \\  
        \overline{W_i}
        &\subset  
        V_{\theta_i, \epsilon_1},
    \end{align*}
    where $\mathbf{1}_{Z}$ denotes the characteristic function of $Z$ and $c'_G$ is a positive constant which only depends on $G$.
    
    We set 
    \begin{align*}
        M(\epsilon_1) = \sup_{i=1, ..., N(\epsilon_1), \pi \in \Irr_{\unit}(G)} \abs{\widehat{h_{\theta_i, \epsilon_1}}(\pi)}.
    \end{align*}

    By \cite{Sau97}*{Lemme 3.4}, there exists a vertical band $B$ with compact real part in $\Theta(G)$ containing $C_1$ such that the functions $\widehat{\phi_{\theta_i, \epsilon_1}}(\pi)$ are zero if $\pi \in \Irr_{\unit}(G)$ and $\inf_G(\pi) \not \in B$.
    We will choose an open neighborhood $W'_i$ of $\overline{W_i}$ in $V_{\theta, \epsilon_1}$ such that we have 
    \begin{align*}
         {\mathrm{inf}_G}_*(\mu^G)(\bigcup_{i=1,..., N(\epsilon_1)} (W_i'\setminus \overline{W_i})\, ) \leq \epsilon_2.
    \end{align*}
    This follows from the outer regularity of  the measure ${\mathrm{inf}_G}_*(\mu^G)$, see \cite{Rud87}*{Theorem 2.18}.
    
    We choose a function $a_i \in \cA(G)$ by applying \cite{Sau97}*{Lemme 5.2}. 
    Then, the function $a_i$ is real on $\Theta(G)_{\herm}$ and we have 
    \begin{align*}
        0 \leq a_i(\theta) &\leq 1+\epsilon_3, \quad \theta \in B \cap \Theta(G)_{\herm}, \\ 
        a_i(\theta) &\geq 1, \quad \theta \in \overline{W_i} \cap \Theta(G)_{\herm},  \\
        a_i(\theta) &\leq \epsilon_3, \quad \theta \in (B \setminus W'_i) \cap \Theta(G)_{\herm}.
    \end{align*}
     
    We consider the function 
    \begin{align*} 
       \sum_{i = 1, ..., N(\epsilon_1)} (\mathrm{inf}_G^*a_i) \widehat{h_{\theta_i, \epsilon_1}}.
    \end{align*}
    First, this function is of the form $\widehat{h_1} \in \cF(G)$.
    We consider the value of this function on each set: 
    \begin{itemize}
    \item 
    On the set $\inf_G^{-1}(\bigcup_{i} \overline{W_i}) \cap \Irr_{\unit}(G)_{\mathrm{good}}$, we have 
    \begin{align*}
        \abs{\widehat{h_1}}
        \leq
        (1+\epsilon_3) \epsilon_1 c'_G (\mathbf{1}_{C_1} \circ \mathrm{inf}_G).
    \end{align*}
    \item   
    On the set $\inf_G^{-1}(\bigcup_{i} \overline{W_i}) \cap \Irr_{\unit}(G)_{\mathrm{bad}}$, we have 
    \begin{align*}
        \abs{\widehat{h_1}}
        \leq
        (1+\epsilon_3) c_G c'_G (\mathbf{1}_{C_1} \circ \mathrm{inf}_G) (\mathbf{1}_{\Irr_{\unit}(G)_{\mathrm{bad}}}).
    \end{align*}   
    \item    
    On the set $\inf_G^{-1}(\bigcup_{i} (W'_i \setminus \overline{W_i})) \cap \Irr_{\unit}(G)$, we have 
    \begin{align*}
        \abs{\widehat{h_1}}
        \leq
        (1+\epsilon_3)M(\epsilon_1) N(\epsilon_1) (\mathbf{1}_{\bigcup_{i=1,..., N(\epsilon_1)} (W_i'\setminus \overline{W_i})} \circ \mathrm{inf}_G).
    \end{align*}   
    \item      
    Otherwise, we have 
    \begin{align*}
      \abs{\widehat{h_1}}
      \leq
       \epsilon_3 \sum_{i}\abs{\widehat{h_{\theta_i, \epsilon_1}}}.
    \end{align*}
  \end{itemize}

  We take a positive function $\widehat{h_2}$ with $\widehat{h_2} \geq 1$ on $C_1$ by applying \cite{Sau97}*{Lemme 5.3}.
  We set 
  \begin{align*}
     \widehat{h'} = \widehat{h_1} + \epsilon_3 M(\epsilon_1) N(\epsilon_1) \widehat{h_2}.
  \end{align*}
  By considering whether $\pi \in \inf_G^{-1}(V_{\theta_i, \epsilon_1})$ or not, it is easy to see that we have $\widehat{h'} \geq 0$ on $C_1$.
  Similarly, on the set $\inf_G^{-1}(C_1) \cap \Irr_{\unit}(G)_{\mathrm{bad}}$, we have $\widehat{h'} \geq 1$.
  By the estimate above, if we choose sufficiently small numbers $\epsilon_1, \epsilon_2, \epsilon_3$, we have 
  \begin{align*}
     \mu^G(\widehat{h'}) < \epsilon/2.
  \end{align*} 
  By the argument starting from \cite{Sau97}*{p.178, line 12}, we can replace this function $h'$ to obtain a new function $h \in \Cc(G)$ which satisfies the statement of our proposition.
  As this procedure is explained completely in \cite{Sau97}, we do not repeat the argument.
\end{proof}

\section{Applications}

\subsection{Some lemmas}

Let $\bG$ be a reductive group scheme over a product $A$ of local fields of characteristic $0$. 
We set $G = \bG(A)$.
Let $\mu^G$ denote the Plancherel measure of $G$.
Let $\{ \mu^G_n \}_{n \geq 1}$ be a sequence of positive Borel measures on $\Irr_{\unit}(G)$.

\begin{lemma}[\cite{Sau97}*{Proposition 1.3}]\label{lemma:general}
  Let $\mu^G_n$ be a sequence of positive Borel measures on $\Irr_{\unit}(G)$.
  We assume that for any $h \in \Cc(G)$, we have 
  \begin{align*}
      \lim_{n \to \infty} \mu^G_n(\widehat{h}) = \mu^G(\widehat{h}).
  \end{align*} 
  Then, it follows that
  \begin{align*}
    \lim_{n \to \infty} \mu^G_n(\phi) = \mu^G(\phi)
  \end{align*}
  for any $\phi \in \mathcal{RI}(\Irr_{\temp}(G), \mu^G)$.
\end{lemma}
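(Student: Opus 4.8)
The plan is to derive Lemma~\ref{lemma:general} as a soft consequence of the Sauvageot density principle, Theorem~\ref{theorem:density-principle}. The first observation is that $\mathcal{RI}(\Irr_{\temp}(G),\mu^G)$ is exactly the class of functions to which Theorem~\ref{theorem:density-principle} applies: by definition an element of $\mathcal{RI}(\Irr_{\temp}(G),\mu^G)$ is a bounded, compactly supported function on $\Irr_{\temp}(G)$ whose discontinuity locus is $\mu^G$-null. I would fix once and for all the convention that such a $\phi$ is viewed as a function on $\Irr_{\unit}(G)$ by extension by zero; since $\Irr_{\temp}(G)$ is closed in $\Irr_{\unit}(G)$ and the Plancherel measure $\mu^G$ is supported on $\Irr_{\temp}(G)$, this changes neither $\mu^G(\phi)$ nor the meaning of $\mu^G_n(\phi)$ in the statement.

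The heart of the proof is then a routine three-$\epsilon$ estimate. Fixing $\phi \in \mathcal{RI}(\Irr_{\temp}(G),\mu^G)$ and $\epsilon > 0$, I would invoke Theorem~\ref{theorem:density-principle} to obtain $h_1, h_2 \in \Cc(G)$ with $\abs{\phi - \widehat{h_1}} \le \widehat{h_2}$ pointwise on $\Irr_{\unit}(G)$ and $\mu^G(\widehat{h_2}) < \epsilon$; note that the first inequality forces $\widehat{h_2}\ge 0$. Integrating this inequality against $\mu^G_n$ and against $\mu^G$ gives $\abs{\mu^G_n(\phi) - \mu^G_n(\widehat{h_1})} \le \mu^G_n(\widehat{h_2})$ and $\abs{\mu^G(\phi) - \mu^G(\widehat{h_1})} \le \mu^G(\widehat{h_2}) < \epsilon$. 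By hypothesis $\mu^G_n(\widehat{h_1}) \to \mu^G(\widehat{h_1})$ and $\mu^G_n(\widehat{h_2}) \to \mu^G(\widehat{h_2}) < \epsilon$, so for $n$ large enough we have $\mu^G_n(\widehat{h_2}) < \epsilon$ and $\abs{\mu^G_n(\widehat{h_1}) - \mu^G(\widehat{h_1})} < \epsilon$; the triangle inequality then yields $\abs{\mu^G_n(\phi) - \mu^G(\phi)} < 3\epsilon$ for all such $n$. Hence $\limsup_{n} \abs{\mu^G_n(\phi) - \mu^G(\phi)} \le 3\epsilon$, and since $\epsilon$ was arbitrary the lemma follows.

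I do not expect any genuine obstacle here: the entire mathematical content lies in Theorem~\ref{theorem:density-principle}, and the argument above is purely formal. The one point I would take care to state precisely is that $\mu^G_n(\phi)$ must be meaningful in the first place, which requires the $\mu^G_n$ to be finite on the (compact) support of $\phi$ and $\phi$ to be $\mu^G_n$-measurable. In the applications the $\mu^G_n$ are Radon, indeed finite atomic, measures, for which this is immediate; and in general one notes that for any compact $C \subset \Irr_{\unit}(G)$ there is $h \in \Cc(G)$ with $\widehat{h} \ge 1$ on $C$, so $\mu^G_n(C) \le \mu^G_n(\widehat{h})$ is finite and even bounded in $n$, and one may read $\mu^G_n(\phi)$ through the sandwich $\widehat{h_1} - \widehat{h_2} \le \phi \le \widehat{h_1} + \widehat{h_2}$ by continuous $\mu^G_n$-integrable functions. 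With that understood, the estimate of the previous paragraph goes through verbatim.
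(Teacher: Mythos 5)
Your proposal is correct and follows essentially the same route as the paper: both apply \cref{theorem:density-principle} to produce $h_1, h_2$ with $\abs{\phi - \widehat{h_1}} \le \widehat{h_2}$ and $\mu^G(\widehat{h_2}) < \epsilon$, then run the standard three-term triangle-inequality estimate and pass to the limit superior (the paper's bookkeeping yields $2\epsilon$ where yours yields $3\epsilon$, an immaterial difference). Your closing remark on why $\mu^G_n(\phi)$ is well-defined is a small technical addition not spelled out in the paper, but it does not change the argument.
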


\begin{proof}
    Let $\epsilon$ be any positive real number.
    By \cref{theorem:density-principle}, we can find functions $h_1, h_2 \in \Cc(G)$ such that
    \begin{align*}
        \abs{\phi - \widehat{h_1}} \leq \widehat{h_2}, \\
        \mu^G(\widehat{h_2}) < \epsilon.
    \end{align*}
    We have 
    \begin{multline*}
        \abs{\mu^G_n(\phi) - \mu^G(\phi)} 
        \leq
        \abs{\mu^G_n(\phi) - \mu^G_n(\widehat{h_1})} 
        + 
        \abs{\mu^G(\phi) - \mu^G(\widehat{h_1})} \\
        +
        \abs{\mu^G_n(\widehat{h_1}) - \mu^G(\widehat{h_1})}.
    \end{multline*}
    By the assumption on $h_1, h_2$, we have 
    \begin{align*}
        \abs{\mu^G_n(\phi) - \mu^G_n(\widehat{h_1})} 
        \leq    
        \mu^G_n(h_2), \\
        \abs{\mu^G(\phi) - \mu^G(\widehat{h_1})} 
        \leq    
        \mu^G(h_2).
    \end{align*}
    By substituting these inequalities into the previous inequality and considering the limit superior, we obtain 
    \begin{align*}
        \limsup_{n \to \infty} 
        \abs{\mu^G_n(\phi) - \mu^G(\phi)} 
        \leq    
        2\mu^G(\widehat{h_2})
        < 2\epsilon.
    \end{align*} 
    As we can take $\epsilon > 0$ arbitrarily, we obtain 
    \begin{equation*}
        \lim_{n \to \infty} \mu^G_n(\phi) = \mu^G(\phi). \qedhere
    \end{equation*}
\end{proof}

We also give a lemma to globalize local characters.
Let $\Lambda = \Lambda_1 \coprod \Lambda_2$ be a set such that $\Lambda_2$ is finite.
For $\lambda \in \Lambda_1$, let $\cP_{\lambda} = (G_{\lambda}, K_{\lambda})$ be a pair consisting of a locally compact Hausdorff group $G_{\lambda}$ and its open compact subgroup $K_{\lambda}$.
For $\lambda \in \Lambda_2$, let $\cP_{\lambda} = G_{\lambda}$ be a locally compact Hausdorff space.
Let $G$ be a restricted product $
\prod_{\lambda \in \Lambda_1}' (G_{\lambda}, K_{\lambda}) \times \prod_{\lambda \in \Lambda_2} G_{\lambda}$ of the family $\cP_{\lambda}$.
Then, $G$ is locally compact.
Let $\Gamma$ be a discrete subgroup of $G$ such that the map $\Gamma \hookrightarrow G \twoheadrightarrow G_{\lambda_0}$ is injective for an element $\lambda_0 \in \Lambda$.

\begin{lemma}\label{lemma:globalization-compact}
    Let $S$ be a finite subset of $\Lambda_1$ which does not contain $\lambda_0$. 
    Let $\chi_{S}$ be a continuous character of $K_S = \prod_{\lambda \in S} K_{\lambda}$.
    Then, there exists a character $\chi$ of $G/\Gamma$, such that we have
    \begin{align*}
        \chi_{\restriction_{K_{S}}}&= \chi_S, \\
        \chi_{\prod_{\Lambda_1 \setminus (S \cup \{ \lambda_0 \})} K_{\lambda}} &= \mathbf{1}_{\prod_{\Lambda_1 \setminus (S \cup \{ \lambda_0 \})} K_{\lambda}}.
    \end{align*}
\end{lemma}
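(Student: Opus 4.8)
The plan is to deduce the lemma from the standard fact that, in a locally compact abelian group, a continuous character of a closed subgroup extends to the whole group (Pontryagin duality). Since characters of $G$ are the same as characters of the abelianization $G/\overline{[G,G]}$, and in the applications $G$ is already abelian (the adelic points of a torus), I carry out the argument assuming $G$ abelian and write the group law additively. Put $R=\Lambda_1\setminus(S\cup\{\lambda_0\})$, $K_R=\prod_{\lambda\in R}K_\lambda$ and $K'=\prod_{\lambda\in\Lambda_1,\ \lambda\neq\lambda_0}K_\lambda$, each viewed as a compact subgroup of $G$ via the identity in the remaining coordinates (if $\lambda_0\notin\Lambda_1$, then $K'=\prod_{\lambda\in\Lambda_1}K_\lambda$). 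Then $K'=K_S\oplus K_R$ internally, so the formula $\psi'(k_S+k_R)=\chi_S(k_S)$ defines a continuous character $\psi'$ of $K'$ with $\psi'|_{K_S}=\chi_S$ and $\psi'|_{K_R}=\mathbf{1}$.

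Next I would record the key elementary fact that $\Gamma\cap K'=\{1\}$: every element of $K'$ has trivial $\lambda_0$-coordinate, while $\Gamma\hookrightarrow G\twoheadrightarrow G_{\lambda_0}$ is injective. Consequently the cosets $\gamma+K'$, $\gamma\in\Gamma$, are pairwise disjoint; as $\Gamma$ is discrete (hence closed) and $K'$ is compact, the family $\{\gamma+K'\}_{\gamma\in\Gamma}$ is locally finite, so $\Gamma+K'$ is a closed subgroup of $G$ and $K'$ is open in it. Since the decomposition of $x\in\Gamma+K'$ as $x=\gamma+k$ with $\gamma\in\Gamma$, $k\in K'$ is unique, the recipe $\psi(\gamma+k)=\psi'(k)$ gives a well-defined homomorphism $\psi\colon\Gamma+K'\to\U(1)$ (here using that $G$ is abelian); it is trivial on $\Gamma$, restricts to $\psi'$ on $K'$, and is continuous, being a homomorphism that is continuous on the open subgroup $K'$.

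Finally, Pontryagin duality for the closed subgroup $\Gamma+K'$ of the locally compact abelian group $G$ shows that the restriction map $\widehat{G}\to\widehat{\Gamma+K'}$ is surjective, so $\psi$ extends to a continuous character $\chi$ of $G$. This $\chi$ is trivial on $\Gamma$, hence descends to a character of $G/\Gamma$; moreover $\chi|_{K_S}=\psi'|_{K_S}=\chi_S$ and $\chi|_{K_R}=\psi'|_{K_R}=\mathbf{1}$, where $K_R=\prod_{\lambda\in\Lambda_1\setminus(S\cup\{\lambda_0\})}K_\lambda$, which are exactly the two asserted properties. The substantive points are the reduction to the abelian case and the extension input; I expect the extension step to require the most care, as it rests on $\Gamma+K'$ being a closed subgroup carrying the product topology of $\Gamma\times K'$. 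This uses only that $\Gamma$ is discrete and $K'$ is compact; in particular it does not need the image of $\Gamma$ in $G_{\lambda_0}$ to be closed, which in general it is not.
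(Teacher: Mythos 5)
Your proof is correct and uses the same two ingredients as the paper's, namely that injectivity of $\Gamma\hookrightarrow G_{\lambda_0}$ forces $\Gamma\cap K'=\{1\}$, and the Pontryagin extension theorem applied to the character $\chi_S\otimes\mathbf 1_{K_R}$. The only real difference is where the extension is carried out: you build the closed subgroup $\Gamma+K'$ inside $G$ and must verify that $K'$ is open in it so that $\psi$ is continuous, which needs the local-finiteness argument you sketch; the paper instead passes to $G/\Gamma$, where the image of the compact group $K'$ is automatically compact and hence closed in the Hausdorff group $G/\Gamma$, and $\Gamma\cap K'=\{1\}$ immediately makes $K'\to\Gamma K'/\Gamma$ a homeomorphism, so the topological point you flag at the end --- that $\Gamma+K'$ should carry the product topology of $\Gamma\times K'$ --- never arises. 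Both routes are sound; the paper's is marginally cleaner because compactness does the topological work for free once one is in $G/\Gamma$.
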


\begin{proof}
    By the hypothesis on $\lambda_0$, we have 
    \begin{align*}
        \Gamma \cap \prod_{\lambda \neq \lambda_0} K_{\lambda} = \{1\}.
    \end{align*}
    Also, the group $\prod_{\lambda \neq \lambda_0} K_{\lambda}$ is compact, the image of the map
    \begin{align*}
        \prod_{\lambda \neq \lambda_0} K_{\lambda} \to G/\Gamma.
    \end{align*}
    is compact and closed.
    Thus, the subgroup $\Gamma \prod_{\lambda \neq \lambda_0} K_{\lambda}/\Gamma$ of $G/\Gamma$ is closed and isomorphic to $\prod_{\lambda \neq \lambda_0} K_{\lambda}$.
    By the Pontryagin duality, there exists a unitary extension $\chi$ of the character $\chi_S \otimes \mathbf{1}_{\prod_{\Lambda_1 \setminus (S \cup \{ \lambda_0 \})} K_{\lambda}}$ to $G/\Gamma$. 
\end{proof}

\begin{corollary}\label{corollary:globalization-characters}
    Let $T$ be a torus over a number field $k$, and let $S$ be a finite set of finite places of $k$. 
    Suppose that $\chi_S$ be a character of the maximal compact subgroup $T(k_S)_c$ of $T(k_S)$.
    Then, there exists a unitary character $\chi'$ of $T(\A_k)/T(k)$ which extends $\chi_S$ on $T(k_S)_c$ and unramified outside $S$. 

    We assume that $S$ is a singleton $\{ u_0 \}$ and $A_{T_{u_0}} = A_T \times_{k} k_{u_0}$.
    Let $\chi_{u_0}$ be a unitary character of $T(k_{u_0})$.
    Then, there exists a unitary character $\chi$ of $T(\A_k)/T(k)$ which extends $\chi_{u_0}$ on $T(k_{u_0})$ and unramified outside $\{u_0\}$. 
\end{corollary}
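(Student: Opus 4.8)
For the first assertion I would simply unwind \cref{lemma:globalization-compact}. Realize $T(\A_k)$ as the restricted product $\prod'_{v\nmid\infty}(T(k_v),K_v)\times\prod_{v\mid\infty}T(k_v)$, where I am free to choose $K_v=T(k_v)_c$ (a compact open subgroup of the $p$-adic torus $T(k_v)$) for the finitely many $v\in S$ and $K_v=T(\cO_v)$ otherwise, which does not change the restricted product. Take $\Lambda_1$ to be the set of finite places, $\Lambda_2$ the (finite, nonempty) set of archimedean places, and $\Gamma=T(k)$, which is discrete in $T(\A_k)$ and injects into $T(k_{v_0})$ for any archimedean place $v_0$ since $k\hookrightarrow k_{v_0}$. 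Applying \cref{lemma:globalization-compact} with $\lambda_0=v_0\in\Lambda_2$ — so that automatically $S\subset\Lambda_1\setminus\{\lambda_0\}$ and $\Lambda_1\setminus(S\cup\{\lambda_0\})=\Lambda_1\setminus S$ — produces a unitary character $\chi'$ of $T(\A_k)/T(k)$ with $\chi'|_{T(k_S)_c}=\chi_S$ and $\chi'|_{K_v}=\mathbf{1}$ for every finite $v\notin S$, i.e. unramified outside $S$. That is the claim.

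For the second assertion the plan is to bootstrap from the first. First I would reduce to the case where $\chi_{u_0}$ is trivial on $T(k_{u_0})_c$: apply the first part to $\chi_{u_0}|_{T(k_{u_0})_c}$ to obtain a unitary character $\chi_1$ of $T(\A_k)/T(k)$, unramified outside $u_0$, with $\chi_1|_{T(k_{u_0})_c}=\chi_{u_0}|_{T(k_{u_0})_c}$. Then $\psi_{u_0}:=\chi_{u_0}\cdot(\chi_1|_{T(k_{u_0})})^{-1}$ is a unitary character of $T(k_{u_0})$ trivial on $T(k_{u_0})_c$, hence factors through the finitely generated free abelian group $\Lambda_{u_0}:=T(k_{u_0})/T(k_{u_0})_c$, whose rank is $\dim A_{T_{u_0}}$. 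It now suffices to extend $\psi_{u_0}$ to a unitary character $\chi_2$ of $T(\A_k)/T(k)$ that is trivial on $T(k_v)_c$ for every finite $v$; then $\chi:=\chi_1\chi_2$ is unramified outside $\{u_0\}$ and $\chi|_{T(k_{u_0})}=\chi_1|_{T(k_{u_0})}\cdot\psi_{u_0}=\chi_{u_0}$.

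To produce $\chi_2$, set $Q:=T(\A_k)/\bigl(T(k)\cdot\prod_{v\nmid\infty}T(k_v)_c\bigr)$, a Hausdorff locally compact abelian group whose Pontryagin dual is exactly the group of unitary characters of $T(\A_k)/T(k)$ that are unramified at all finite places. A short computation, using that the elements of $\prod_{v\nmid\infty}T(k_v)_c$ are trivial at the archimedean places and $T(k)\hookrightarrow\prod_{v\mid\infty}T(k_v)$, shows that the natural map $T(k_{u_0})\to Q$ has kernel exactly $T(k_{u_0})_c$, hence induces an injection $\Lambda_{u_0}\hookrightarrow Q$. By Pontryagin duality, extending $\psi_{u_0}$ from $\Lambda_{u_0}$ to $Q$ is possible once one knows that the image of $\Lambda_{u_0}$ in $Q$ is closed, equivalently (being countable) discrete.

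For this closedness I would use the homomorphism $H_T\colon T(\A_k)\to\Hom_{\Z}(X^*(T),\R)$ characterized by $\langle H_T(t),\chi\rangle=\log|\chi(t)|_{\A_k}$, where $X^*(T)$ is the group of $k$-rational characters of $T$; it kills $T(k)$ by the product formula and kills each $T(k_v)_c$, so it descends to a continuous map $\overline{H}_T\colon Q\to\Hom_{\Z}(X^*(T),\R)$. Restricted to $\Lambda_{u_0}$, the composite $\Lambda_{u_0}\to\Hom_{\Z}(X^*(T),\R)$ is, under the canonical identifications, the local valuation map of the torus $A_{T_{u_0}}$; and here the hypothesis $A_{T_{u_0}}=A_T\times_k k_{u_0}$ is essential: it forces $\dim A_{T_{u_0}}=\dim A_T=\mathrm{rank}\,X^*(T)$ and makes this composite injective with image a full lattice, hence discrete. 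Therefore the image of $\Lambda_{u_0}$ in $Q$ maps injectively onto a discrete subset under a continuous map, so it is discrete, hence closed, and Pontryagin duality yields $\chi_2$, completing the proof. The main obstacle is precisely this last step: establishing the closedness/discreteness of the image of $T(k_{u_0})$ in $Q$, which is the one place where the assumption on $A_{T_{u_0}}$ must genuinely be invoked and which rests on the structure theory of $p$-adic tori (the identification $T(k_{u_0})/T(k_{u_0})_c$ with the cocharacter lattice of $A_{T_{u_0}}$, up to finite index) together with the global product formula, rather than on any formal manipulation.
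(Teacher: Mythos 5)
Your proof is correct and follows essentially the same route as the paper: the first part by instantiating \cref{lemma:globalization-compact} with an archimedean $\lambda_0$, and the second part by twisting to reduce to an unramified $\chi_{u_0}$, then using $A_{T_{u_0}} = A_T\times_k k_{u_0}$ to embed $T(k_{u_0})/T(k_{u_0})_c$ as a closed (lattice) subgroup of a quotient of $T(\A_k)/T(k)$ via the $H_T$-map and extending by Pontryagin duality. The paper phrases that last step more compactly as the injection $T(k_{u_0})/T(k_{u_0})_c\hookrightarrow T(\A_k)/T(\A_k)^1$, but the underlying mechanism (the valuation map, the rank equality forced by the hypothesis, and duality) is identical to yours.
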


\begin{proof}
     The first statement follows from the previous lemma by taking the set of places of $k$ as $\Lambda$, the set of finite places of $k$ as $\Lambda_1$,  the set of infinite places of $k$ as $\Lambda_2$ and an infinite place $k$ as $\lambda_0$.
     We now consider the second statement.
     By applying the first statement to obtain a global character $\chi'$ which extends ${\chi_{u_0}}_{\restriction{T(k_{u_0})_c}}$.
     Considering the twist by $\chi'$, we may assume that ${\chi_{u_0}}_{\restriction{T(k_{u_0})_c}} = 1$.
     Note that the group $T(k_{u_0})_c$ is the intersection of the kernels of unramified characters of $T(k_{u_0})$.
     Thus, the character $\chi_{u_0}$ is unramified.
     By the assumption $A_{T_{u_0}} = A_T \times_{k} k_{u_0}$, we have the injection 
     \begin{align*}
        T(k_{u_0})/T(k_{u_0})_c \hookrightarrow 
        T(\A_k)/T(\A_k)^1,
     \end{align*}
     and the unramified character $\chi_{u_0}$ can be extended to an automorphic character $\chi$ of $T(\A_k)$ by the Pontryagin duality. 
     Hence the result.
\end{proof}

\subsection{Shin's automorphic density theorem}
Let $k$ be a number field with a real place $u_0$. 
Thus, we have $k_{u_0} \cong \R$.
Let $G$ be a connected reductive group over $k$.
We assume that the center $Z_G$ of $G$ is an induced torus over $k$.
For a finite set $S$ of places of $k$, we denote the group $\prod_{v \in S}G(k_v)$ by $G_S$.
Similarly, we set $G^S = \prod_{v \not\in S}' G(F_v)$.
 
Let $\lambda$ be an automorphic unitary character of $Z_G(\A_k)$.
As we have $Z_G(\A_k) = (Z_G)_S \times Z_G^S$, we can decompose this central character as $\lambda = \lambda_S \boxtimes \lambda^S$.
By \cite{MoeglinWaldspurger2016-Stablisation2}*{VI 2.8}, we have the invariant trace formula with the fixed central character $\lambda$.

\begin{definition}
    Let $\Irr_{\unit}(G(F_S), \lambda_S)$ be a set of unitary representations of $G(F_S)$ with the central character $\lambda_S$.
    This set is also endowed with the restriction of the Fell topology on $\Irr_{\unit}(G(F_S))$.
    A Borel subset $U_S$ of $\Irr_{\unit}(G_S, \lambda_S)$ is said to be $\mu^{G_S}$-regular if the measure $\mu^{G_S}(\partial U_S)$ of the boundary is equal to zero.
\end{definition}

\begin{theorem}[c.f. \cite{Shi12}*{Theorem 4.11}]\label{theorem:globalizaton-Shin}
   Let $k$ be a number field with a real place $u_0$.
   Let $G$ be a connected reductive group over $k$.
   Let $\lambda$ be an automorphic unitary character of $Z_G(\A_k)$.  
   Let $S$ be a finite set of places of $k$ containing all the infinite places and assume that $G$ and $\lambda$ is unramified outside $S$.
   We set $S' = S \setminus \{u_0\}$.

   We assume the following. 
   \begin{itemize}
    \item    
     The center $Z_G$ of $G$ is induced.
    \item
     The group $G(k_{u_0})$ has a discrete series 
     representation with character $\lambda_{u_0}$ and also we have $A_G \times_{k} k_{u_0} = A_{G_{k_{u_0}}}$.
     \item        
     We are given a $\mu^{G_{S'}}$-regular relatively quasi-compact set $U_{S'} \subset \Irr_{\unit}(G(k_{S'}), \lambda_{S'})$ such that $\mu^{G_{S'}}(U_{S'}) > 0$.
   \end{itemize}
   Then, there exists a cuspidal automorphic representation $\Pi$ of $G(\A_k)$ with the central character $\lambda$ such that 
   \begin{itemize}
     \item  we have $\Pi_{S'} \in U_{S'}$, 
     \item the representation $\Pi_v$ is unramified for $v \not\in S$, and 
     \item the representation $\Pi_{u_0}$ is a discrete series representation of $G(k_{u_0})$ and the infinitesimal character of $\Pi_{u_0}$ is sufficiently regular.
   \end{itemize}
\end{theorem}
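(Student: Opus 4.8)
The plan is to derive \cref{theorem:globalizaton-Shin} from the invariant trace formula with fixed central character $\lambda$, using Euler--Poincar\'e functions at $u_0$ and the density principle \cref{theorem:density-principle} (through its measure-theoretic form \cref{lemma:general}) at the remaining places $S'=S\setminus\{u_0\}$. First I would fix an irreducible algebraic representation $\xi$ of $G(k_{u_0})$ whose central character agrees with $\lambda_{u_0}^{-1}$ on $Z_G(k_{u_0})$; this is possible precisely because $G(k_{u_0})$ has a discrete series with central character $\lambda_{u_0}$ and $A_G\times_k k_{u_0}=A_{G_{k_{u_0}}}$. Let $f_\xi\in\Cc(G(k_{u_0}),\lambda_{u_0}^{-1})$ be the associated Euler--Poincar\'e function, normalized so that $\widehat{f_\xi}(\pi)=\sum_i(-1)^i\dim H^i(\fg,K;\pi\otimes\xi)$ for every unitary $\pi$ with central character $\lambda_{u_0}$, and put $f'_\xi=(-1)^{q}f_\xi$, where $q$ is half the dimension of the symmetric space of $G(k_{u_0})$. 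The key input is that when $\xi$ is regular the only unitary representations with nonvanishing $(\fg,K)$-cohomology against $\xi$ are the discrete series in the $L$-packet $P(\xi)$ having the same infinitesimal character as $\xi^{\vee}$, each contributing $(-1)^q$; thus $\widehat{f'_\xi}$ is the indicator of $P(\xi)$ on the unitary dual, and in particular $f'_\xi$ annihilates every unitary representation that is not a discrete series.

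Away from $u_0$ I would use the density principle. By hypothesis $U_{S'}$ is relatively quasi-compact and $\mu^{G_{S'}}$-regular, so $\mathbf 1_{U_{S'}}\in\mathcal{RI}(\Irr_{\temp}(G_{S'}),\mu^{G_{S'}})$; tensoring with the characteristic functions of the unramified loci at the places $v\notin S$ (each the Fourier transform $\widehat{\mathbf 1}_{K_v}$ and of Plancherel mass $1$) yields a function $\phi\in\mathcal{RI}(\Irr_{\temp}(G(\A_k^{u_0})),\mu^{G(\A_k^{u_0})})$ with $\mu^{G(\A_k^{u_0})}(\phi)=\mu^{G_{S'}}(U_{S'})>0$. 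For each regular $\xi$ as above I would then form the positive, locally finite Borel measure on $\Irr_{\unit}(G(\A_k^{u_0}),\lambda^{u_0})$
\[
  \mu_\xi=\frac{1}{N(\xi)}\sum_{\Pi}m_{\disc}(\Pi)\,\widehat{f'_\xi}(\Pi_{u_0})\,\delta_{\Pi^{u_0}},
  \qquad N(\xi)=(-1)^q\vol(G(k)Z_G(\A_k)\backslash G(\A_k))\,f_\xi(1),
\]
the sum running over discrete automorphic representations $\Pi$ of $G(\A_k)$ with central character $\lambda$; here $N(\xi)>0$ because $(-1)^qf_\xi(1)$ equals the sum of the formal degrees of the members of $P(\xi)$, and local finiteness holds since $\widehat{f'_\xi}(\Pi_{u_0})\neq 0$ forces $\Pi_{u_0}\in P(\xi)$, a finite set.

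Using the invariant trace formula with central character $\lambda$ (available because $Z_G$ is induced) one has, for $h\in\Cc(G(\A_k^{u_0}),(\lambda^{u_0})^{-1})$, the identity $N(\xi)\,\mu_\xi(\widehat h)=(-1)^q I^{\lambda}_{\disc}(f_\xi\otimes h)=(-1)^q\bigl(I^{\lambda}_{\geom}(f_\xi\otimes h)-(\text{continuous and residual-type spectral terms})\bigr)$. On the geometric side the orbital integrals of $f_\xi$ vanish on elements that are not elliptic at $u_0$, the identity contribution equals $(-1)^qN(\xi)\,h(1)$ exactly, and all remaining geometric terms are $o(N(\xi))$ as $\xi$ runs through representations of increasing regularity (for a fixed non-central elliptic $\gamma$ the orbital integrals $O_\gamma(f_\xi)$ stay bounded while $f_\xi(1)$ grows, and similarly for the weighted orbital integrals); likewise the continuous and residual-type spectral terms are $o(N(\xi))$. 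Hence $\mu_\xi(\widehat h)\to h(1)=\mu^{G(\A_k^{u_0})}(\widehat h)$ by the Plancherel theorem. Feeding this into the fixed-central-character analogue of \cref{lemma:general} gives $\mu_\xi(\phi)\to\mu^{G(\A_k^{u_0})}(\phi)=\mu^{G_{S'}}(U_{S'})>0$, so $\mu_\xi(\phi)>0$ once $\xi$ is sufficiently regular. As $\mu_\xi$ is a nonnegative combination of point masses $\delta_{\Pi^{u_0}}$ over discrete $\Pi$ with $m_{\disc}(\Pi)>0$ and $\Pi_{u_0}\in P(\xi)$, some such $\Pi$ has $\phi(\Pi^{u_0})>0$; unwinding, $\Pi$ has central character $\lambda$, is unramified outside $S$, satisfies $\Pi_{S'}\in U_{S'}$, and $\Pi_{u_0}$ is a discrete series with the sufficiently regular infinitesimal character of $\xi^{\vee}$. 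Finally $\Pi$ is cuspidal: a member of the residual spectrum is a constituent of a residue of an Eisenstein series attached to a proper parabolic, hence has a non-tempered archimedean component and cannot have $\Pi_{u_0}$ a discrete series, while the remaining non-cuspidal discrete contributions have already been discarded above.

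I expect the main obstacle to be the estimate $I^{\lambda}_{\geom}(f_\xi\otimes h)=(-1)^qN(\xi)\,h(1)+o(N(\xi))$ together with the matching bound for the continuous part of the spectral side: this is precisely the automorphic Plancherel density input of Shin, and a careful treatment requires control of the weighted orbital integrals of Euler--Poincar\'e functions and of the continuous spectrum (along the lines of Finis--Lapid--M\"uller, Finis--Matz and Shin). A secondary technical point is setting up the fixed-central-character versions of the Sauvageot machinery and of \cref{lemma:general}, which is routine given that $Z_G$ is induced.
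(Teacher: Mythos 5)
Your proposal is essentially the same strategy as the paper's proof: use the invariant trace formula with fixed central character $\lambda$ (which exists since $Z_G$ is induced), plug in Euler--Poincar\'e functions $\phi_{\xi_n}$ at $u_0$ with $\xi_n$ tending to infinity in regularity, the unramified unit away from $S$, and arbitrary test functions at $S'$, show that the resulting normalized spectral measures on $\Irr_{\unit}(G(k_{S'}),\lambda_{S'})$ converge to Plancherel via dominance of the identity contribution (the paper outsources this to \cite{Shi12}*{Lemma 4.9} and the GKM bounds on weighted orbital integrals of Euler--Poincar\'e functions, which is exactly where $A_G\times_k k_{u_0}=A_{G_{k_{u_0}}}$ enters), and finish with the fixed-central-character analogue of the Sauvageot density principle (\cite{Binder2019}) together with \cref{lemma:general}. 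The only cosmetic difference is that the paper works with $\Pi_{\cusp}$ from the start and lets Shin's Lemma 4.9 handle the discrete-spectrum bookkeeping, whereas you start from $\Pi_{\disc}$ and separately argue that a $\Pi$ whose $u_0$-component is a discrete series must be cuspidal; both routes are sound.
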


\begin{proof}
   The proof is essentially the same as \cite{Shi12}*{Theorem 4.11}.
   We use the invariant trace formula for the fixed central character $\lambda$ constructed in \cite{MoeglinWaldspurger2016-Stablisation2}*{VI 2.8}, instead of the usual Arthur trace formula.

   We first recall some definitions which are similar to those in \cite{Shi12}.
   We set $G^{\natural} = G/Z_G$.
   Let $\Pi_{\cusp}(G(\A_k), \lambda)$ denote the set of cuspidal automorphic representations of $G(\A_k)$ with the central character $\lambda$ and $m_{\cusp}(\pi)$ be the multiplicity of $\pi$ in the space of cuspidal automorphic forms on $G(\A_k)$.
   Let $\{ \xi_n \}_{n \geq 1}$ be a sequence of the finite dimensional algebraic representation of $G(k_{u_0} \times_{\R} \C)$ with the central character $\lambda_{u_0}$ on $Z_G(k_{u_0})$, such that $\lim_{n \to \infty} \xi_n = \infty$ in the sense of \cite{Shi12}*{Definition 3.5}.
   We define the functional $\mu^{\cusp}_{\phi^{S'}, \xi_n, \lambda}$ on $\Cc(G(F_{S'}), \lambda_{S'}^{-1})$ as 
   \begin{multline*}
    \mu^{\cusp}_{\phi^{S'}, \xi_n, \lambda}(f)  \\
    = 
    \frac{(-1)^{q(G_{u_0})}}{\dim(\xi_n)\vol(G^{\natural}(k) \backslash G^{\natural}(\A_k))}
    \sum_{ \pi \in \Pi_{\cusp}(G(\A_k), \lambda) } 
    m_{\cusp}(\pi) \tr\pi(f \otimes \phi^{S} \otimes \phi_{\xi_n}).
   \end{multline*}
   Here, the function $\phi_{\xi_n} \in \Cc(G(k_{u_0}), \lambda_{u_0}^{-1})$ is the Euler-Poincare function associated with $\xi_n$ and $\phi^{S}$ is the unit of the ring of unramified functions in $\Cc(G(\A^S_k), (\lambda^S)^{-1})$.
   Let $\mathcal{FT}(G(F_{S'}), \lambda_{S'}^{-1})$ denote the space of Fourier transforms of functions in $\Cc(G(k_{S'}), \lambda_{S'}^{-1})$.
   The last expression is equal to 
   \begin{multline*}
       \frac{\vol(K^SZ_G(\A_k)/Z_G(\A_k))}{\dim(\xi_n)\vol(G^{\natural}(k) \backslash G^{\natural}(\A_k))} \\
       \times        
       \sum_{ \pi \in \Pi_{\cusp}(G(\A_k), \lambda), \pi^{K^S} \neq 0, \text{$\xi_n$-cohomological}} 
    m_{\cusp}(\pi) \tr\pi_{S'}(f) 
   \end{multline*}
   if $n$ is large.

   The functional $\mu^{\cusp}_{\phi^{S'}, \xi_n, \lambda}$ factors through the map $\Cc(G(k_{S'}), \lambda_{S'}^{-1}) \to \mathcal{FT}(G(F_{S'}), \lambda_{S'}^{-1})$ taking the Fourier transform. 
   We denote the resulting functional on $\mathcal{FT}(G(F_{S'}), \lambda_{S'}^{-1})$ by $\widehat{\mu}^{\cusp}_{\phi_n^{S'}, \xi, \lambda}$.
   Applying \cite{Shi12}*{Lemma 4.9} and $\phi_{\xi_n}(1) = \dim(\xi_n)$, we have 
   \begin{align*}
    \lim_{n \to \infty} 
    \widehat{\mu}^{\cusp}_{\phi^{S'}, \xi_n, \lambda}(\widehat{f}) 
    =  
    f(1) = \mu^{S'}(\widehat{f}),
   \end{align*}     
   for any $f \in \Cc(G(F_{S'}), \lambda_{S'}^{-1})$.
   The assumption $A_{G} \times_{k} k_{u_0} = A_{G_{u_0}}$ is used here to apply the result \cite{GKM1997}*{Theorem 5.2}.
   An analog of the Sauvageot density principle for the fixed central character $\lambda^{-1}_{S'}$ proved in \cite{Binder2019}*{Proposition 3.3.2} by using \cref{theorem:density-principle}.
   Hence, the computation above and an analog of \cref{lemma:general} implies the result.
\end{proof}

\begin{remark}
    Instead of assuming that the center $Z_G$ is induced, we can prove a  similar result by assuming that $G_v$ has an anisotropic center over $k_v$ for $v \in S$, by considering the Poisson summation formula and finiteness of the class number for $Z_G$.
\end{remark}

\subsection{Globalization to spherical cusp forms}

Let $k$ be a number field.
We set $G = \PGL_n(\A_k)$.
Let $S_{\infty}$ be the set of infinite places of $k$.
Let $A_0$ be a maximal split torus of $G$ over $k$.
We denote the group $W(G, A_0)$ by $W_{\R}$. 
Note that the space of spherical tempered representations $\Irr_{\temp, \mathrm{sph}}(G_{S_{\infty}})$ is an open and closed subset of $\Irr_{\temp}(G_{S_{\infty}})$.
This space is identified with $i\fa^{*}_{0}/W_{\R}$. Thus, for any subset $\Omega$ and positive number $t \in \R_{>0}$, we have the dilatation $t\Omega$ of $\Omega$.

\begin{theorem}
   For any finite set of finite places $S$,  $\mu^{G_S}$-regular relatively quasi-compact set $U_S$ of $\Irr_{\unit}(G_S)$ with positive measure and any open subset $U_{\infty}$ of $\Irr_{\temp, \mathrm{sph}}(G_{S_{\infty}})$ with positive measure, there exists a cuspidal automorphic representation of $\PGL_n(\A_k)$ such that 
   \begin{itemize}
    \item we have $\Pi_S \in U_S$,
    \item $\pi_v$ is unramified for $v \not \in S$, and
    \item $\Pi_{S_{\infty}}$ is spherical and the representation is in the set $tU_{\infty}$ for some $t \geq 1$.
   \end{itemize}
\end{theorem}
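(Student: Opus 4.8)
The plan is to realize the sought cuspidal representation as an atom of a counting measure on $\Irr_{\unit}(G_S)$, where $G_S=\prod_{v\in S}\PGL_n(k_v)$, and to control that measure by feeding the Weyl law with Hecke operators into the Sauvageot density principle through \cref{lemma:general}. First I would replace $U_\infty$ by an open ball $U'_\infty\subseteq U_\infty$ whose closure avoids the walls of $i\fa_0^*/W_\R$; since $tU'_\infty\subseteq tU_\infty$ for every $t$, it suffices to treat $U'_\infty$, and $\partial U'_\infty$ is then a null set. For $T\geq 1$ put $\Omega_T=T\cdot U'_\infty\subset\Irr_{\temp,\mathrm{sph}}(G_{S_\infty})$ and
\[
  \mu_T=\frac{1}{V(T)}\sum_{\Pi}m_{\cusp}(\Pi)\,\delta_{\Pi_S},
\]
the sum running over the cuspidal automorphic representations $\Pi$ of $\PGL_n(\A_k)$ that are unramified at every finite place outside $S$ and whose archimedean component satisfies $\Pi_{S_\infty}\in\Omega_T$ (so $\Pi$ is spherical at each archimedean place); here $m_{\cusp}(\Pi)$ is the cuspidal multiplicity and $V(T)\to\infty$ is a normalizing constant fixed below. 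Each $\mu_T$ is a positive Borel measure (a countable sum of point masses), and $\mu_T(\widehat h)$ is a finite sum for every $h\in\Cc(G_S)$, because $h$ is bi-invariant under some compact open subgroup of $G_S$ and cuspidal representations of bounded level at $S$, unramified outside $S$, with bounded archimedean infinitesimal character form a finite set.

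The key step is the limit $\lim_{T\to\infty}\mu_T(\widehat h)=\mu^{G_S}(\widehat h)$ for all $h\in\Cc(G_S)$. One proves it by applying the invariant trace formula on $\PGL_n(\A_k)$ (whose center is trivial, so there are no central-character or stabilization complications) to a test function equal to $h$ at the places of $S$, to the unit of the unramified Hecke algebra at every finite place outside $S$, and at the archimedean places to a function $f_{\infty,T}\in\Cc(G_{S_\infty})$ obtained by spherical inversion so that its Fourier transform approximates $\mathbf 1_{\Omega_T}$ on $\Irr_{\temp,\mathrm{sph}}(G_{S_\infty})$ and vanishes on the non-spherical tempered dual. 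The identity term on the geometric side produces the main term, proportional to $h(1)=\mu^{G_S}(\widehat h)$; the remaining geometric terms, the residual and Eisenstein parts of the spectral side, and the complementary-series contributions at the archimedean places are all of lower order. This is precisely the Weyl law with Hecke operators for $\PGL_n$, due to Finis--Matz and Eikemeier and resting on the limit-multiplicity estimates of Finis--Lapid--M\"uller and on the work of Lapid--M\"uller and Matz--Templier; a standard sandwiching of $\mathbf 1_{\Omega_T}$ by Fourier transforms of nonnegative functions lets one choose $V(T)$ (independent of $h$, and with $V(T)\asymp\vol_{\mathrm{pl}}(\Omega_T)\to\infty$) so that the displayed limit holds.

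Granting this, \cref{lemma:general} applied to the sequence $(\mu_T)_{T\in\N}$ in place of $(\mu^G_n)$ gives $\lim_{T\to\infty}\mu_T(\phi)=\mu^{G_S}(\phi)$ for every $\phi\in\mathcal{RI}(\Irr_{\temp}(G_S),\mu^{G_S})$. I would take $\phi=\mathbf 1_{U_S\cap\Irr_{\temp}(G_S)}$: it is bounded, its support lies in the compact set $\overline{U_S}$ since $U_S$ is relatively quasi-compact, and its set of discontinuities in $\Irr_{\temp}(G_S)$ is contained in $\partial U_S$, which is $\mu^{G_S}$-null since $U_S$ is $\mu^{G_S}$-regular; hence $\phi\in\mathcal{RI}(\Irr_{\temp}(G_S),\mu^{G_S})$. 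As $\mu^{G_S}$ is carried by $\Irr_{\temp}(G_S)$ this yields $\lim_{T\to\infty}\mu_T(\mathbf 1_{U_S\cap\Irr_{\temp}(G_S)})=\mu^{G_S}(U_S)>0$, so $\mu_T(\mathbf 1_{U_S})\geq\mu_T(\mathbf 1_{U_S\cap\Irr_{\temp}(G_S)})>0$ for all large $T$. Since $\mu_T$ is a positive measure, for such a $T$ the defining sum contains a cuspidal automorphic representation $\Pi$ of $\PGL_n(\A_k)$ with $\Pi_S\in U_S$; by construction $\Pi_v$ is unramified for every $v\notin S$, $\Pi_{S_\infty}$ is spherical, and $\Pi_{S_\infty}\in\Omega_T=T\cdot U'_\infty\subseteq T\cdot U_\infty$, i.e. $\Pi_{S_\infty}\in tU_\infty$ with $t=T\geq 1$. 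This is the required representation.

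The main obstacle is the key limit of the second paragraph, i.e. the Weyl law with Hecke operators for the cuspidal spectrum of $\PGL_n$: establishing it requires the full strength of the geometric trace-formula estimates of Finis--Lapid--M\"uller and Finis--Matz, together with the archimedean test-function bookkeeping and the control of the non-tempered spherical spectrum at the archimedean places. Everything else is a formal consequence of \cref{lemma:general} and \cref{theorem:density-principle}.
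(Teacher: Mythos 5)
Your proposal follows essentially the same route as the paper: define a normalized positive measure on $\Irr_{\unit}(G_S)$ by counting (cuspidal) spherical automorphic representations unramified outside $S$ with archimedean parameter in $t\,\Omega$, invoke the Weyl law with Hecke operators of Finis--Matz and Eikemeier to get $\mu_T(\widehat h)\to\mu^{G_S}(\widehat h)$, apply \cref{lemma:general}, and evaluate on $\mathbf 1_{U_S}$ (after shrinking $U_\infty$ so its boundary is negligible, just as the paper shrinks to $\Omega$ with rectifiable boundary). The only cosmetic difference is that the paper works with $m_{\disc}$ and cites \cite{FLM15}*{Remark 1.1} at the end to pass to $m_{\cusp}$, whereas you build $m_{\cusp}$ in from the start; this is equivalent once one appeals to the same references for the negligibility of the residual spectrum.
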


\begin{proof}
    This follows from \cite{Eikemeier2022}*{Theorem 1.2} and \cref{theorem:density-principle} and we will give the details below.
    The first result is an extension of the result  \cite{FinisMatz2021}*{Theorem 1.1} to quasi-split groups, see the footnote of \cite{FinisMatz2021}*{p.1038}.

    Now, we give the details. 
    Let $T_0$ be maximal split torus of $G$ and $\ft_0$ be its Lie algebra.
    Then, the infinitesimal characters of spherical representations of $G_{S_{\infty}}$ are parametrized by the quotient $(\ft^{*}_0 \otimes_{\R} \C)/W_{\R}$. 
    Those for unitary spherical representations are in the subset $i\ft^{*}_0/W_{\R}$.
    For any bounded set $B \subset (\ft^{*}_0 \otimes_{\R} \C)/W_{\R}$ and for any smooth function $\tau \in \Cc(G(\A_{k, \mathrm{fin}}))$, we set 
    \begin{align*}
          m(B, \tau) = \sum_{\lambda \in B} \sum_{\pi \in \Pi_{\disc}(G(\A_k)), \pi_{\infty}^{K_{\infty}} \neq 0, W_{\R}\lambda_{\pi_{\infty}} = W_{\R}\lambda} m_{\disc}(\pi) \tr \pi_{\mathrm{fin}}(\tau).
    \end{align*}
    This counts the contribution of spherical discrete automorphic representations with the infinitesimal character in the $W_{\R}$-orbit of $B$.
    Let $\Omega \subset i\ft^{*}_0$ be a bounded open subset with the rectifiable boundary in the sense of  \cite{FinisMatz2021}*{p.1094, line 15}.
    By shrinking $\Omega$, we can always achieve this condition.
    For $t \geq 1$, let $\Lambda_{\Omega}(t)$ be the volume of $t\Omega$ with respect to the Plancherel measure, see \cite{Eikemeier2022}*{p.14, line 25} for an explicit description.
    Then, by \cite{Eikemeier2022}*{Theorem 1.2}, we have 
    \begin{align*}
        \lim_{t \to \infty} \frac{1}{\Lambda_{\Omega}(t)}  m(t \Omega, \tau)  
        = 
        \tau(1)
        =
        \mu^{G_S}(\widehat{\tau_S}).
    \end{align*}

    We take $\tau = \tau_S \otimes \bigotimes_{v \not \in S}\mathbf{1}_{K_v}$.
    Note that if we write $m(B, \tau_S)$ for $m(B, \tau)$, then, we have
    \begin{align*}
        m(B, \tau_S)
        &=
        \sum_{\lambda \in B} 
        \sum_{\pi \in \Pi_{\disc}(G(\A_k)), \pi^{K^SK_{\infty}} \neq 0, W_{\R}\lambda_{\pi_{\infty}} = W_{\R}\lambda} 
        m_{\disc}(\pi) \tr \pi_{S}(\tau_S) \\
        &= 
        \sum_{\lambda \in B} \sum_{\pi \in \Pi_{\disc}(G(\A_k)), \pi^{K^SK_{\infty}} \neq 0, W_{\R}\lambda_{\pi_{\infty}} = W_{\R}\lambda} m_{\disc}(\pi) \widehat{\tau_S}(\pi_S).
  \end{align*}
   Thus, if for any $F \in \mathcal{RI}(\Irr_{\unit}(G_S), \mu^{G_S})$ we set 
   \begin{align*}
     \mu^{G_S}_{\disc, t}(F) =
     \frac{1}{\Lambda_{\Omega}(t)} 
     \sum_{\lambda \in B} \sum_{\pi \in \Pi_{\disc}(G(\A_k)), \pi^{K^SK_{\infty}} \neq 0, W_{\R}\lambda_{\pi_{\infty}} = W_{\R}\lambda} m_{\disc}(\pi) F(\pi_S),
   \end{align*}
   then, we have 
        \begin{align*}
        \lim_{t \to \infty} \mu^{G_S}_{\disc, t}(\widehat{\tau_S}) = \mu^{G_S}(\widehat{\tau_S}).
    \end{align*}
   Then, \cref{lemma:general} implies the result.   
   Note that we can replace $m_{\disc}(\pi)$ with $m_{\cusp}(\pi)$, see \cite{FLM15}*{Remark 1.1}.
\end{proof}

\begin{remark}
    The argument of this type can be applied to more general (not necessarily adjoint) quasi-split reductive groups, see \cite{Eikemeier2022}*{Theorem 1.2}.
\end{remark}

\bibliographystyle{plain}
\bibliography{sauvageot}

\end{document}